\definecolor{darkergreen}{rgb}{0.0, 0.5, 0.0}
\numberwithin{equation}{section}
\newcommand{\be}{\begin{eqnarray}}
\newcommand{\ee}{\end{eqnarray}}
\newcommand{\ce}{\begin{eqnarray*}}
\newcommand{\de}{\end{eqnarray*}}
\newtheorem{theorem}{Theorem}[section]
\newtheorem{lemma}[theorem]{Lemma}
\newtheorem{remark}[theorem]{Remark}
\newtheorem{definition}[theorem]{Definition}
\newtheorem{proposition}[theorem]{Proposition}
\newtheorem{Examples}[theorem]{Example}
\newtheorem{corollary}[theorem]{Corollary}
\newenvironment{nouppercase}{%
  \renewcommand{\uppercasenonmath}[1]{}}{}
\def\[{{\Big[}}
\def\]{{\Big]}}
\def\<{{\langle}}
\def\>{{\rangle}}
\def\({{\Big(}}
\def\){{\Big)}}
\def\bx{{\mathbf{x}}}
\def\sgn{\mbox{\rm sgn}}
\def\min{{\mathord{{\rm min}}}}
\def\={&\!\!=\!\!&}
\def\1{{\mathbf{1}}}
\def\geq{\geqslant}
\def\leq{\leqslant}
\def\ge{\geqslant}
\def\le{\leqslant}
\def\\nabla\cdot {\mathord{{\rm \nabla\cdot }}}
\def\[{{\Big[}}
\def\]{{\Big]}}
\def\<{{\langle}}
\def\>{{\rangle}}
\def\({{\Big(}}
\def\){{\Big)}}
\def\bx{{\mathbf{x}}}
\def\sgn{\mbox{\rm sgn}}
\def\min{{\mathord{{\rm min}}}}
\def\={&\!\!=\!\!&}
\def\bt{\begin{theorem}}
\def\et{\end{theorem}}
\def\bl{\begin{lemma}}
\def\el{\end{lemma}}
\def\br{\begin{remark}}
\def\er{\end{remark}}
\def\bx{\begin{Examples}}
\def\ex{\end{Examples}}
\def\bd{\begin{definition}}
\def\ed{\end{definition}}
\def\bp{\begin{proposition}}
\def\ep{\end{proposition}}
\def\bc{\begin{corollary}}
\def\ec{\end{corollary}}
\def\geq{\geqslant}
\def\leq{\leqslant}
\def\ge{\geqslant}
\def\le{\leqslant}
\def\\nabla\cdot {\mathord{{\rm \nabla\cdot }}}
\def\<{\langle} \def\>{\rangle}
\tikzset{
        dot/.style={circle,fill=black,inner sep=0pt, outer sep=0.7pt, minimum size=1mm},
        Phi/.style={white!40!red,thick,snake=coil,segment amplitude=0.6pt, segment length=2pt},
         Z/.style={black!40!green,thick,snake=coil,segment amplitude=0.6pt, segment length=2pt},
        C/.style={thick,black!20!blue},
          Cr/.style={thick,black!20!red},
            Cg/.style={thick,black!20!green},
       }
\begin{document}
\title[McKean-Vlasov PDE and large deviations]{\LARGE McKean-Vlasov PDE with Irregular Drift and Applications to Large Deviations for Conservative SPDEs}

\author[Zhengyan Wu]{\large Zhengyan Wu}
\address[Z. Wu]{Department of Mathematics, University of Bielefeld, D-33615 Bielefeld, Germany.School of Mathematical Sciences, University of Chinese Academy of Sciences, China}
\email{zwu@math.uni-bielefeld.de}
\author{Rangrang Zhang}
\address[R. Zhang]{School of Mathematics and Statistics,
Beijing Institute of Technology, Beijing, 100081, China
}
\email{rrzhang@amss.ac.cn}

\begin{abstract}
Inspired by [Fehrman, Gess; Invent. Math., 2023], we provide a fine analysis of the McKean-Vlasov PDE with singular interactions and  drift terms of square root form. As the corresponding skeleton equation of Dean-Kawasaki equation with singular interactions (a stochastic, conservative
PDE), it determines the rate function of small noise large deviations. By imposing Ladyzhenskaya-Prodi-Serrin type conditions on the interaction kernel,
we establish the large deviations in the framework of stochastic renormalized kinetic solution, when the intensity and the correlation of the noise are simultaneously sent to $0$ under a suitable scaling. This result contributes to demonstrating
the consistency between the macroscopic fluctuation theory associated with singular interacting mean-field systems and fluctuating hydrodynamics related to the Dean-Kawasaki equation. As an application, we also obtain large deviations for the other stochastic conservative PDE called fluctuating Ising-Kac-Kawasaki dynamics. It is of great importance in exploring fluctuations of Kawasaki dynamical Ising-Kac model, since they formally exhibits the same key features in terms of Gaussian fluctuations, large deviations, and scaling limits near criticality.

%

\end{abstract}

\subjclass[2010]{60H15; 35R60}
\keywords{}

\date{\today}

\begin{nouppercase}
\maketitle
\end{nouppercase}

\setcounter{tocdepth}{1}

\section{Introduction}
In this paper, we perform an analysis on the well-posedness and the stability of the McKean-Vlasov partial differential equations (PDEs) with an irregular drift
\begin{equation}\label{mean-fieldPDE}
\partial_t\rho=\Delta\rho-\nabla\cdot(\rho V\ast\rho)-\nabla\cdot(\sqrt{\rho}g), 	
\end{equation}
where $g\in L^2([0,T]\times\mathbb{T}^d;\mathbb{R}^d)$, the function $V$ is singular in the sense that it is only $L^p$-integrable and $\ast$ stands for spatial convolution. In addition to the mathematical significance due to the singular coefficients, (\ref{mean-fieldPDE}) is the skeleton equation corresponding to the dynamical zero noise large deviations of Dean-Kawasaki equation with singular interactions
\begin{align}\label{DK-1}
 \partial_t \rho^{\varepsilon,K(\varepsilon)}=\Delta \rho^{\varepsilon,K(\varepsilon)}-\nabla\cdot(\rho^{\varepsilon,K(\varepsilon)} V\ast \rho^{\varepsilon,K(\varepsilon)})-\sqrt{\varepsilon}\nabla\cdot(\sqrt{\rho^{\varepsilon,K(\varepsilon)}} \circ \xi^{K(\varepsilon)}),
 \end{align}
in a joint scaling regime $(\varepsilon,K(\varepsilon))\rightarrow(0,+\infty)$. Here, $\xi^K$ is a spatial regularization of space time white noise and $\circ$ denotes the Stratonovich integration. In fact, with (\ref{mean-fieldPDE}), the
large deviations of (\ref{DK-1}) can be described by the rate function
\begin{align}\label{k-7}
I(\rho)=\frac{1}{2}\inf\Big\{\|g\|^2_{L^2(\mathbb{T}^d\times[0,T];\mathbb{R}^d)}:\partial_t \rho=\Delta\rho-\nabla\cdot(\rho V\ast\rho)-\nabla\cdot(\sqrt{\rho} g)\Big\}.
\end{align}

The zero noise large deviations of Dean-Kawasaki equation (\ref{DK-1}) can contribute to demonstrating the consistency between the macroscopic fluctuation theory (MFT) associated with singular interacting mean-field systems and fluctuating hydrodynamics (FHD) related to the Dean-Kawasaki equation. Over the past two decades, significant strides have been made in understanding non-equilibrium diffusive systems through MFT, which promotes the understanding of processes far from equilibrium and refined near-equilibrium linear approximations (see Bertini et al. \cite{BDGJL}, Derrida \cite{Derrida}). The starting point of MFT is an ansatz on large deviations for interacting particle systems, which has aroused great interest from the mathematics community working on the large deviation principle \cite{CG22,IS}. The theory of FHD offers a framework for modeling microscopic fluctuations consistent with statistical mechanics and non-equilibrium thermodynamics, wherein a stochastic, conservative PDE is postulated to capture fluctuations in systems out of equilibrium (see \cite{LL87}, \cite{HS}). Hence, the fundamental ansatz of MFT can be obtained from the zero noise large deviations for this conservative stochastic PDE. For instance, as a typical model in FHD, the Dean-Kawasaki equation (\ref{DK-1}) can accurately predict fluctuations in mean-field systems, for details, see the subsection \ref{subsec-1.1}, (1).

The other motivation of studying (\ref{mean-fieldPDE}) comes from its  background from singular interacting mean-field systems.
Referring to \cite{WWZ22}, equation (\ref{DK-1}) is strongly related to the mean-field system:
\begin{align}\label{IPS}
dX_i(t)=\frac{1}{N}\sum^N_{j\neq i}V(X_i(t)-X_j(t))dt+\sqrt{2}dB_i(t),
\end{align}
where $V$ is a singular interaction kernel. Such a singular interacting mean-field system has a background in different fields, for example, in plasma physics \cite{D79}, in fluids \cite{JO04}, and  in biological sciences \cite{CCH14}. The large $N$ limit of (\ref{IPS}) leads to the so-called McKean-Vlasov PDE:
\begin{equation}\label{mean-fieldPDE-0}
\partial_t\rho=\Delta\rho-\nabla\cdot(\rho V\ast\rho).
\end{equation}
The McKean-Vlasov PDE (\ref{mean-fieldPDE-0}) has a rich structure, which has aroused great interest in the mathematical community and fruitful results have been achieved. For example, it can be taken as a nonlinear Fokker-Planck equation associated with distribution-dependent stochastic differential equations (SDEs), see \cite{RZ18,BR23,BR23spdeac} and references therein. Moreover, when the kernel is in a typical form $V=-\nabla \mathcal{U}$, it can be represented as a gradient flow on the Wasserstein space with respect to the energy functional:
\begin{equation}
\mathcal{E}(\rho)=\int_{\mathbb{T}^d}\rho\log\rho+\rho\mathcal{U}\ast\rho dx.
\end{equation}
For further feature of the McKean-Vlasov PDE as a gradient flow, we refer readers to \cite{JKO98,Otto01}.
Clearly, (\ref{mean-fieldPDE}) can be viewed as the McKean-Vlasov PDE driven by a highly nonlinear irregular force. The analysis of (\ref{mean-fieldPDE}) requires us to deal not only with the McKean-Vlasov PDE itself, but also with the difficulties caused by the irregular force term.

In the sequel, we always assume that $T\in(0,\infty)$ is a fixed time horizon. Closely following the well-posedness of (\ref{DK-1}), we impose the same conditions on the kernel $V$ as \cite{WWZ22}. Let the spatial dimension $d\geq 2$. The kernel
 $V:[0,T]\times\mathbb{T}^d\rightarrow\mathbb{R}^d$ is assumed to satisfy a Ladyzhenskaya-Prodi-Serrin (LPS) type condition:
\begin{description}
\item[{\bf Assumption (A1)}] $V\in L^{p^{*}}([0,T];L^{p}(\mathbb{T}^d))$ with $\frac{d}{p}+\frac{2}{p^{*}}\leq 1$, $2\leq p^{*}<\infty$ and $d<p\leq \infty$,
\item[{\bf Assumption (A2)}] $\nabla\cdot V\in L^{q^{*}}([0,T];L^{q}(\mathbb{T}^d))$ with $ \frac{d}{2q}+\frac{1}{q^{*}}\leq 1$, $1\leq q^{*}<\infty$ and $d/2<q\leq \infty$.
\end{description}
The LPS condition is a regularity criterion proposed by Prodi \cite{Pro59}, Serrin \cite{Ser62}, and Ladyzhenskaya \cite{Lad67} in studying the uniqueness of the 3D Navier-Stokes equations. Later, the work of \cite{CI07} leads to the study of stochastic differential equations (SDEs) with a singular drift that satisfies the LPS condition. Various literature in this direction includes \cite{KR05, Zha05,Zha11,XXZZ20,RZ18} and references therein.

Let $\rm{Ent}(\mathbb{T}^d)$ be the space of finite entropy dissipation functions given by
\begin{align*}
{\rm{Ent}}(\mathbb{T}^d):=\Big\{\rho\in L^1(\mathbb{T}^d): \rho\geq0\ a.e.\ {\rm{and}}\ \int_{\mathbb{T}^d}\rho\log\rho dx<\infty\Big\}.
\end{align*}
The well-posedness and stability of (\ref{mean-fieldPDE}) reads as follows.
\begin{theorem}\label{thm-1-main}(cf. Propositions \ref{L1uniq}, \ref{prp-2} and \ref{prp-3})
Assume that $ V$ satisfies Assumptions (A1) and (A2). For any
 $\rho_0\in \rm{Ent}(\mathbb{T}^d)$ and $g\in L^2([0,T]\times\mathbb{T}^d;\mathbb{R}^d)$, there exists a unique weak solution to (\ref{mean-fieldPDE}). Furthermore, for any $N>0$, assume that $\{g_n\}_{n\in\mathbb{N}_+}, g\subseteq L^2([0,T]\times\mathbb{T}^d;\mathbb{R}^d)$ satisfying
\begin{align*}
\sup_{n\geq 1}\int^T_0 \|g_n(s)\|^2_{L^2(\mathbb{T}^d)}ds\leq N,
\quad g_n\rightharpoonup g\ {\rm{weakly\ in\ }} L^2([0,T]\times\mathbb{T}^d;\mathbb{R}^d).
\end{align*}
For every $n\in \mathbb{N}$, let $\rho_n$ be the weak solution of the skeleton equation (\ref{skeleton}) with control $g_n$ and initial data $\rho_0$. Let $\rho$ be the weak solution of the skeleton equation to (\ref{skeleton}) with control $g$ and initial data $\rho_0$. Then
\begin{align*}
\rho_n\rightarrow\rho\ {\rm{strongly\ in\ }} L^1([0,T];L^1(\mathbb{T}^d)),
\end{align*}
as $n\rightarrow \infty$.
\end{theorem}

In the framework of large deviations, equation (\ref{mean-fieldPDE}) is the corresponding skeleton equation of (\ref{DK-1}). We emphasize that the uniqueness of the skeleton equation plays a key role in proving large deviations. To illustrate it, we mention several works. In the absence of uniqueness of the skeleton equation, \cite{Daniel23} proposed a counterexample to exhibit the violation of the lower bound of large deviations for the Boltzmann equation. \cite{GHW23} linked the full large deviations of the Landau-Lifshitz-Navier-Stokes equations to the open problems of the energy equality and uniqueness of the skeleton-Navier-Stokes equations. \cite{FG23} provided an approach to show the consistency of the rate function with its lower semi-continuous envelope by using the stability of the skeleton equation, and thereby, they established a full large deviations for the rescaled zero-range process. Thanks to Theorem \ref{thm-1-main}, we are able to establish a full dynamical large deviations of (\ref{DK-1}), which can be formulated as follows.
\begin{theorem}{(cf. Theorem \ref{thm-2})}
Assume that $V$ satisfies Assumptions (A1) and (A2). Let $\rho_0\in  {\rm{Ent}} (\mathbb{T}^d)$, $\varepsilon\in (0,1)$, and $K\in \mathbb{N}$ satisfying $K(\varepsilon)\rightarrow \infty$ and $\varepsilon K^{d+2}(\varepsilon)\rightarrow 0$. Then the solutions $\{\rho^{\varepsilon,K(\varepsilon)}(\rho_0)\}_{\varepsilon\in (0,1)}$ of (\ref{DK-1}) satisfy large deviation principles with rate function $I$ on $L^1([0,T];L^1(\mathbb{T}^d))$, where $I$ is given by (\ref{k-7}).
\end{theorem}

Due to structural similarity, we can also obtain large deviations of a conservative SPDE called  fluctuating Ising-Kac-Kawasaki dynamics
\begin{equation}\label{ising-kac-equation}
\partial_t\rho^{\varepsilon}=\Delta\rho^{\varepsilon}-\beta\nabla\cdot[(1-(\rho^{\varepsilon})^2) \nabla J\ast\rho^{\varepsilon}]-\varepsilon^{1/2}\nabla\cdot(\sqrt{1-(\rho^{\varepsilon})^2}\circ \xi^{K(\varepsilon)}),
\end{equation}
where $\beta$ is a parameter related to temperature and $J$ stands for Kac potential. This equation was
 proposed by \cite{GJE99} when considering nonlinear fluctuation of Ising-Kac-Kawasaki dynamics in a neighborhood of the critical temperature.
As explained in \cite{WWZ22}, this equation can be regarded as a continuum phenomenological model of the Kawasaki dynamical Ising-Kac spin system. Precisely, when the spatial dimension $d=1$, (\ref{ising-kac-equation}) shares the same fluctuations feature with Kawasaki dynamical Ising-Kac process in terms of Gaussian fluctuations, large deviations and scaling limit near criticality. This will be explained in details in subsection \ref{subsec-1.1}, (2).  In this paper, we will prove one of the above features rigorously:  the dynamical large deviations for (\ref{ising-kac-equation}) from its hydrodynamical limits.
In addition, (\ref{ising-kac-equation}) belongs to Ginzburg-Landau dynamics and satisfies fluctuation-dissipation relation, see \cite{HS}.

Define $\Psi(\xi)=\frac{1}{2}[(1+\xi)\log(1+\xi)-(\xi+1)+(1-\xi)\log(1-\xi)-(1-\xi)]$.
Let the initial data be subject to the space
\begin{align*}
\overline{\text{{\rm{Ent}}}}(\mathbb{T}^{d})=\Big\{\rho: -1\leq\rho\leq1\ a.e.,\ {\rm{and}}\ \int_{\mathbb{T}^{d}}\Psi(\rho(x))dx<\infty\Big\}.
\end{align*}
For simplicity, let the parameter $\beta\equiv1$. The result of large deviations of (\ref{ising-kac-equation}) reads as follows.
\begin{theorem}{(cf. Theorem \ref{thm-LDP-ising})}\label{thm-0}
Assume that $J\in C^{\infty}(\mathbb{T}^d)$. Let $\rho_0\in  \overline{\rm{Ent}} (\mathbb{T}^d)$, $\varepsilon\in (0,1)$, and $K\in \mathbb{N}$ satisfying $K(\varepsilon)\rightarrow \infty$ and $\varepsilon K^{d+2}(\varepsilon)\rightarrow 0$. Then the solutions $\{\rho^{\varepsilon,K(\varepsilon)}(\rho_0)\}_{\varepsilon\in (0,1)}$ of (\ref{ising-kac-equation}) satisfy large deviation principles with rate function $I$ on $L^2([0,T];L^2(\mathbb{T}^d))$, where $I$ is given by
\begin{align*}
I(\rho)=\frac{1}{2}\inf\Big\{\|g\|^2_{L^2(\mathbb{T}^d\times[0,T];\mathbb{R}^d)}:
\partial_t \rho=\Delta\rho-\nabla\cdot[(1-\rho^2) \nabla J\ast\rho]-\nabla\cdot(\sqrt{1-\rho^2} g),\  \rho(0)=\rho_0
\Big\}.
\end{align*}
\end{theorem}

\subsection{Applications.}\label{subsec-1.1}
The content is divided into two parts.
We first explain the application of well-posedness and stability of (\ref{mean-fieldPDE}) to singular interacting mean-field systems. Then, we explore the fluctuation behaviors of (\ref{ising-kac-equation}) and Kawasaki dynamical Ising-Kac model.

{\bf(1) Application of Theorem \ref{thm-1-main} to singular interacting mean-field systems.}
In \cite{BKL95}, the authors obtained a restricted large deviations of zero-range processes, that is, the upper bound matches the lower bound only on a class of regular fluctuations. Since then, how to achieve the full large deviations of zero-range processes becomes a longstanding open problem. Recently, a nice work from \cite{FG23} resolves it by showing that the rate function of large deviations for zero-range processes is consistent with its lower-semicontinuous envelope on every path with finite entropy dissipation. The key of the proof is the well-posedness and stability of the skeleton equation. For details, see \cite[Section 8]{FG23}. Based on this result, \cite{GH23} proved full large deviations for rescaled zero-range processes by a path-wise regularity argument. Thus, the large deviations of fluctuating hydrodynamic equations can contribute to the study of interacting particle systems.

Inspired by \cite{FG23} and \cite{GH23}, with the well-posedness and stability of (\ref{mean-fieldPDE}) in hand, we can expect that similar results hold for singular interacting mean-field systems.
 As explained in \cite{WWZ22}, the Dean-Kawasaki equation (\ref{DK-1}) exhibits the same fluctuation features with interacting mean-field systems (\ref{IPS}). There are a series of works on dynamical large deviations for mean-field systems, for example, \cite{DJ87,BDF12,CG22}. We mention the result of \cite{CG22}, wherein the interaction kernel $V=-\nabla^{\top}\mathcal{N}=(-\partial_{x_2}\mathcal{N},\partial_{x_1}\mathcal{N})$ on a two-dimensional torus and $\mathcal{N}$ can be Green function of the Laplacian or a logarithmic function. The authors of \cite{CG22} established the lower bound and upper bound separately, when restricting to the space of finite energy:
\begin{equation*}
Q_T(\rho):=\sup_{t\in[0,T]}\Big(\frac{1}{2}\int_{\mathbb{T}^2}\rho(t)\mathcal{N}\ast\rho(t) dx+\frac{1}{2}\int^t_0\|\rho(s)-1\|_{L^2(\mathbb{T}^2)}^2ds\Big)<\infty. 	
\end{equation*}
In particular, we point out that when proving the lower bound, \cite{CG22} employed an extension argument from a regular domain $\mathcal{J}$ (see \cite[Lemma 2.9]{CG22}) to the space of finite energy: $Q_T(\rho)<\infty$. In general, such an extension argument strongly relies on the consistency of the rate function and its lower semicontinuous envelope. The result of \cite[Section 8]{FG23} implies that the stability of the skeleton equation plays an important role in verifying the lower semicontinuous envelope of the rate function. Therefore, we try to apply Theorem \ref{thm-1-main} to the study of large deviations for mean-field systems with general interactions (for instance, LPS type interactions), especially in the 'extension' step for the lower bound.

{\bf(2) Fluctuation behaviors of (\ref{ising-kac-equation}) and Kawasaki dynamical Ising-Kac model.}
The Ising model with a Kac interaction was introduced to recover the van der Waals theory of phase transition \cite{HKU}. Two main dynamics for the Ising-Kac model are Glauber dynamics and Kawasaki dynamics. For the latter, the time evolution of the spin systems involves neighboring atoms swapping their spins with a frequency that depends on the current configuration and the temperature (see \cite{HS}).

Concerning the fluctuation behaviors of (\ref{ising-kac-equation}) and Kawasaki dynamical Ising-Kac model, it formally suggests that
 (\ref{ising-kac-equation}) exhibits the same three key features as the Kawasaki dynamical Ising-Kac model in the terms of Gaussian fluctuations, large deviations, and scaling limits near criticality.
Let $\sigma(\cdot,t)$ denote the Ising-Kac-Kawasaki process defined by \cite[(6.6)]{GJE99}, whose hydrodynamic limit $m^{(K)}$ fulfills that
\begin{align}\label{rr-1}
	\partial_t m^{(K)}=\Delta m^{(K)}-\beta\nabla\cdot[(1-(m^{(K)})^2) \nabla J\ast m^{(K)}].
\end{align}
Then,
for every $\phi\in C^{\infty}_c(\mathbb{R}^d)$, the fluctuation field given by
\begin{equation*}
X^{\varepsilon}(t,\phi):=\varepsilon^{d/2}\sum_{x\in\mathbb{Z}^d}\phi(\varepsilon x)[\sigma(x,\varepsilon^{-2}t)-m^{(K)}(\varepsilon x,t)]
\end{equation*}
is conjectured to converge to $\bar{\rho}^1(\phi)$, where $\bar{\rho}^1$ solves a linear SPDE \cite[(6.25)]{GJE99}:
\begin{align}\label{rr-2}
\partial_t\bar{\rho}^1=\Delta\bar{\rho}^1-&\beta\nabla\cdot[(1-(m^{(K)})^2)\nabla J\ast\bar{\rho}^1]\\
+&2\beta\nabla\cdot[\bar{\rho}^1\nabla J\ast m^{(K)}]-\nabla\cdot\Big(\sqrt{1-(m^{(K)})^2}\xi\Big)	,
\end{align}
as $\varepsilon\rightarrow0$, where $\xi$ is the space-time white noise.
For the fluctuation behavior of (\ref{ising-kac-equation}), it
 informally satisfies
\begin{equation*}
	\varepsilon^{-1/2}(\rho^{\varepsilon}-m^{(K)})\rightharpoonup\bar{\rho}^1,
\end{equation*}
as $\varepsilon\rightarrow0$, where $m^{(K)}$ and $\bar{\rho}^1$ are given by (\ref{rr-1}) and (\ref{rr-2}), respectively. This implies that (\ref{ising-kac-equation}) and Kawasaki dynamical Ising-Kac model have the same Gaussian fluctuations and large deviations.
In addition, in one spatial dimension, for any $a\in\mathbb{R}$, let $\beta=1+a\varepsilon^{2/3}$, the rescaled density field
\begin{equation*}
\rho_{\varepsilon}(x,t):=\varepsilon^{-1/3}\rho^{\varepsilon}(\varepsilon^{-1/3}x,\varepsilon^{-4/3}t)
\end{equation*}
is conjectured to converge to the conservative stochastic Cahn-Hilliard equation \cite{GJE99}
\begin{equation}\label{cahnhilliard}
\partial_tu=-\partial_{xx}^2(\partial_{xx}^2u-u^3+au)-\partial_x\xi,
\end{equation}
as $\varepsilon\rightarrow0$. Hence, (\ref{ising-kac-equation}) shares the same scaling limits near criticality as the discrete Ising-Kac-Kawasaki particle system as well. However, this conjecture still remains widely open both in continuous and discrete setting, only some progress in the discrete setting has been made by \cite{IM18}. Regarding the large deviations, we refer readers to \cite{ME07} for
Kawasaki dynamics, and the above Theorem \ref{thm-0} is the large deviations for (\ref{ising-kac-equation}). Remarkably, structural similarities between the two rate functions can be observed.
Motivated by these results, we devote to exploring fluctuations of (\ref{ising-kac-equation}) to understand the behavior of  Ising-kac kawasaki model and its continuous analogue.

\subsection{Key ideas and technical comments}
(1){\bf{ The McKean-Vlasov PDEs.}} The analysis of the McKean-Vlasov PDEs (\ref{mean-fieldPDE})
 involves three aspects: path-wise uniqueness, equivalence between different concepts of solution, and weak-strong continuity. Since the last one is a byproduct of proving the well-posedness, we will make comments on the first two aspects.

\noindent (i) The main obstacle of uniqueness arises from the square root coefficient and the singular nonlocal interaction kernel. The former has been overcome by \cite{FG23}, where the authors introduced cut-off functions,  then employed renormalized kinetic formulation and  $L^1$-theory. However, cut-off functions cannot control the nonlocal term. To prove the uniqueness, we employ the Fisher information of the solution to control the nonlocal term and utilize Gronwall's inequality, which is different from \cite{FG23}, where all terms vanish in the $L^1$-framework.

\noindent(ii) Equivalence between weak solutions and renormalized kinetic solutions. Inspired by the gradient flow structure of the Dean-Kawasaki equation, an entropy dissipation estimates holds that can provide compactness of weak solutions in $L^{1}([0,T];L^1(\mathbb{T}^d))$ (see Lemma \ref{lem-3}). Hence, we derive the existence of weak solutions of the skeleton equation by introducing two approximating equations: smooth kernel and smooth coefficients. Compared with \cite{FG23}, we need to pass to the limit of the singular interaction term, which is nontrivial. As stated in the above (i), the uniqueness is established for renormalized kinetic solutions. Therefore, a bridge between weak solutions and renormalized kinetic solutions has to be built. We emphasize that the regularity required to guarantee the well-definedness of weak solutions and renormalized kinetic solutions is different. Precisely, for any $\varphi \in C^{\infty}_c(\mathbb{T}^d\times(0,\infty))$, the interaction kernel term involved in the definition of renormalized kinetic solution (see Definition \ref{dfn-3} later on) is of the form:
\begin{align*}
\int_0^t\int_{\mathbb{T}^d}\nabla(\varphi(x,\rho))\cdot\rho V\ast \rho dxds=&\int_0^t\int_{\mathbb{T}^d}\nabla_x\varphi(x,\rho)\cdot\rho V\ast \rho dxds\\
&+\int_0^t\int_{\mathbb{T}^d}\partial_{\xi}\varphi(x,\rho)2\nabla\sqrt{\rho}\cdot\sqrt{\rho}\rho V\ast (\nabla\rho)dxds\\
\leq &C(\varphi)\int_0^t\int_{\mathbb{T}^d}|\nabla\sqrt{\rho} \cdot V\ast \rho| dxds.
\end{align*}
Therefore, it requires that $\nabla\sqrt{\rho}\cdot V\ast\rho\in L^1([0,T];L^1(\mathbb{T}^d))$. However, for any $\psi \in C^{\infty}(\mathbb{T}^d)$, the interaction kernel term appeared in the definition of weak solution (see Definition \ref{dfn-1} later) is
\begin{align*}
\int^t_0\int_{\mathbb{T}^d}\rho V\ast\rho\cdot\nabla\psi(x)dxds \leq C(\psi)\int^t_0\int_{\mathbb{T}^d}|\rho V\ast\rho|dxds.
\end{align*}
In this case, it requires that $\rho V\ast\rho\in L^1([0,T];L^1(\mathbb{T}^d;\mathbb{R}^d))$. As a result, we will discuss the equivalence under different regularity conditions. The main idea of this part basically follows from \cite{FG23}, where the authors mollify the equation and apply additional regularity (provided by the entropy dissipation estimate) to pass to the limits of the commutator. Compared to previous literature, we highlight the analysis of the singular interaction kernel term.

{\bf{ (2) The weak convergence approach to LDP of Dean-Kawasaki equation (\ref{DK-1}).}} To prove the Freidlin-Wentzell's LDP, an important tool is the weak convergence approach developed by Dupuis and Ellis in \cite{DE}. The key idea of this approach is to prove a certain variational representation formula about the Laplace transform of bounded continuous functionals, which then leads to the verification of the equivalence between the LDP and the Laplace principle. In particular, for Brownian functionals, an elegant variational representation formula has been established by Bou\'{e} and Dupuis in \cite{MP} and by Budhiraja and Dupuis in \cite{BD}.
In this paper, we adopt the sufficient condition for the large deviation criteria of Budhiraja, Dupuis, and Maroulas \cite{BDM11} for functionals of Brownian motions, which consists of two parts. The first part is to show the weak-strong continuity of the skeleton equation, which is ensured by the stability in (1).
The second part is to prove the weak convergence of the perturbations of (\ref{DK-1}) (it is called  stochastic controlled equation, see (\ref{s-3}) below) in the random directions of the Cameron-Martin space of the driving Brownian motions. The fundamental problem is to establish the well-posedness of the renormalized kinetic solutions to the stochastic controlled equation. The standard $L^p(\mathbb{T}^d)-$estimates for $p\in (1,\infty)$ are not available in the presence of non-local kernel term, which is compensated by $L^1(\mathbb{T}^d)$ preservation and entropy estimates. Combining techniques from \cite{FG23} and \cite{WWZ22}, the well-posedness can be obtained. Moreover, with the aid of the entropy dissipation estimates, it implies the tightness of the laws of renormalized kinetic solutions in $L^1([0,T];L^1(\mathbb{T}^d))$.
Also, we need to show that the renormalized kinetic solution of stochastic controlled equation converges in distribution to the solution of skeleton equation when the control term converges weakly. Since the skeleton equation can be equipped with weak solution and kinetic solution at the same time, we have two choices to achieve the second part. However, we are not able to prove that the renormalized kinetic solution of stochastic controlled equation converges to the kinetic solution of skeleton equation due to the difficulty arising from the term:
\begin{align}\label{w-1}
\lim_{\varepsilon\rightarrow 0}  \Big( 2\int^t_0\int_{\mathbb{T}^d}\tilde{\rho}^{\varepsilon}\nabla \sqrt{\tilde{\rho}^{\varepsilon}}P_{K(\varepsilon)}\tilde{g}^{\varepsilon}(x,s)\partial_{\xi}\psi(x,\tilde{\rho}^{\varepsilon})dxds\Big),
\end{align}
where the product of the weakly convergent gradient and control terms appears. As a result, we cannot identify the limit. The crucial observation is that this term does not appear in the weak solution of skeleton equation. Therefore, we follow the idea of \cite{FG23} to introduce test functions that are compactly in the velocity variable to recover the classic weak formulation of the skeleton equation.

Recently, Matoussi, Sabbagh and Zhang in \cite{MSZ21} proposed a modified sufficient condition to verify the large deviation criteria of Budhiraja, Dupuis and Maroulas for functionals of Brownian motions.
 This condition has been succeed in proving large deviations of scalar stochastic conservation laws by \cite{DWZZ20} and the other references \cite{LSZZ23,WZ22}.
 The characteristic of this sufficient condition is that the control terms of the stochastic controlled equation and the skeleton equation are the same (denoted by $g^{\varepsilon}$). Under this circumstance, only the weakly convergent gradient term appears in (\ref{w-1}). Inspired by \cite{DWZZ20}, we try to apply the doubling variables method to directly show that the kinetic solution of the stochastic controlled equation converges to the kinetic solution of the skeleton equation. Referring to \cite[Theorem 3.3, (3.13)]{FG24}, we need to handle one of the control terms:
\begin{align*}
Q:=4\int_{\mathbb{R}}\int_{(\mathbb{T}^d)^3}(P_{K(\varepsilon)}g^{\varepsilon}(x,t)-g^{\varepsilon}(x',t))
\sqrt{\rho^1}\sqrt{\rho^2}\cdot \nabla_x \sqrt{\rho^1}\bar{\kappa}^{\gamma,\delta}_{1,t}\bar{\kappa}^{\gamma,\delta}_{2,t}\zeta^M(\eta)dxdx'dyd\eta,
\end{align*}
where $\rho^1$ and $\rho^2$ are kinetic solutions of the stochastic controlled equation and the skeleton equation, respectively. The functions $\bar{\kappa}^{\gamma,\delta}_{1,t}=\kappa^{\gamma,\delta}(x,y,\rho^1,\eta)$, $\bar{\kappa}^{\gamma,\delta}_{2,t}=\kappa^{\gamma,\delta}(x',y,\rho^2,\eta)$ with $\kappa^{\gamma,\delta}$ being the standard convolution kernel on spatial and velocity variables. The continuous piecewise linear function $\zeta^M$ is compactly supported in $[1/M, M+1]$. $P_{K(\varepsilon)}$ is Fourier projection.
To prove the convergence, it requires to show that $Q\rightarrow0$ as $\gamma, \delta$ and $\varepsilon$ converge to $0$.
Note that
\begin{align*}
Q=&4\int_{\mathbb{R}}\int_{(\mathbb{T}^d)^3}(P_{K(\varepsilon)}g^{\varepsilon}(x,t)-g^{\varepsilon}(x,t))
\sqrt{\rho^1}\sqrt{\rho^2}\cdot \nabla_x \sqrt{\rho^1}\bar{\kappa}^{\gamma,\delta}_{1,t}\bar{\kappa}^{\gamma,\delta}_{2,t}\zeta^M(\eta)dxdx'dyd\eta\\
&+ 4\int_{\mathbb{R}}\int_{(\mathbb{T}^d)^3}(g^{\varepsilon}(x,t)-g^{\varepsilon}(x',t))
\sqrt{\rho^1}\sqrt{\rho^2}\cdot \nabla_x \sqrt{\rho^1}\bar{\kappa}^{\gamma,\delta}_{1,t}\bar{\kappa}^{\gamma,\delta}_{2,t}\zeta^M(\eta)dxdx'dyd\eta\\
=:& Q_1+Q_2.
\end{align*}
Clearly, $Q_2$ converges to $0$ when $\gamma\rightarrow 0$. However, we are not able to show that $Q_1$ converges to $0$ in the presence of the Fourier projection $P_{K(\varepsilon)}$, which results in the failure of the above approach.

\subsection{Comments on the literature}
The Dean-Kawasaki equation (together with several variants) has been an active research topic in various branches of non-equilibrium statistical mechanics over the last twenty years. In \cite{RK09}, von Renesse and Sturm constructed a process having the Wasserstein distance as its intrinsic metric, which can formally be seen as a solution to the Dean-Kawasaki equation with the kernel term replaced by a nonlinear operator. Lehmann, Konarovskyi and von Renesse \cite{KLR19} asserted that some correction term is in fact necessary for the existence of (nontrivial) solutions to these Dean-Kawasaki type models.
Konarovskyi, Lehmann and von Renesse \cite{KLR20} show the existence of
 measure-valued solutions to (\ref{DK-1}) driven by It\^o type space-time white noise in certain parameter regimes when $V$ is smooth. For more results on Dean-Kawasaki model, we refer the readers to \cite{AR10, CSZ19, KLR19} and references therein.

 Regarding the large deviations for SPDEs in singular limits, we refer to Faris and Jona-Lasinio \cite{FJ82}, Jona-Lasinio and Mitter \cite{JM90}, Cerrai and Freidlin \cite{CD19}, and Hairer and Weber \cite{HW}. Let us mention particularly the last work \cite{HW}, where the authors show that the renormalization constants for the stochastic Allen-Cahn equation appear in the rate function. This completely differs from the present paper, where possible renormalization constants vanish in the joint scaling regimes of the noise intensity and regularity of the space-time white noise.

Regarding the large $N$ behavior of singular interacting mean-field systems, we display the following literature. \cite{WZZ22} proved Gaussian fluctuations of interacting mean-field systems including Biot-Savart type singular kernel. \cite{HRZ22} established propagation of chaos for mean-field systems with $L^p$-type interactions by using Zvonkin's transformation. \cite{HHMT} proved  large deviations for mean-field systems with $L^p$-type interactions. Further analysis of more general singular kernels have been made by \cite{JW18, BJW20, Ser20, RS, WZZ22b}.

We display some literature relevant to dynamical Ising-Kac model as well. The conjecture concerning nonlinear fluctuations of the Glauber dynamics in 1, 2 and 3 dimensions is completely settled and the rescaled density field is shown to converge to dynamical $\Phi^4_d$ model. See \cite{BPRS94,FR95} for $d=1$, \cite{MW17} for $d=2$, and \cite{GMW23} for $d=3$. For the macroscopic limits of Kawasaki dynamics, we refer readers to \cite{GL97}. Further analysis of Kawasaki dynamics can be found in \cite{VY97,GL15,Gia91,LOP91}.

\subsection{Structure of the paper}
Some basic notations and assumptions on the interaction kernel are presented in Section \ref{sec-2}. Section \ref{sec-3} is devoted to showing well-posedness and stability of the McKean-Vlasov partial differential equations. Preliminaries on large deviations and the main result are presented in Section \ref{sec-4}. In Section \ref{sec-5}, well-posedness and tightness of renormalized kinetic solutions to the stochastic controlled equation are proved.
Section \ref{sec-6} contains the proof of large deviations for Dean-Kawasaki equation with non-local interactions in a joint scaling regime. An application to the large deviations for Ising-Kac-Kawasaki equation is presented in Section \ref{sec-7}.

\section{Preliminaries}\label{sec-2}
\subsection{Notations}
We closely follow the framework of \cite{WWZ22}. {\bf{Throughout the whole content, we always assume that $T\in(0,\infty)$ is a fixed time horizon.}}
Let $\nabla$ and $\nabla\cdot$ be the derivative operator and the divergence operator with respect to the space variable $x\in\mathbb{T}^d$, respectively.
We denote by $L^p(\mathbb{T}^d)$ the Lebesgue spaces for $p\in [1,\infty]$.
 Let $\|\cdot\|_{L^p(\mathbb{T}^d)}$ denote the norm of $L^p(\mathbb{T}^d)$. When $p=2$, the inner product of $L^2(\mathbb{T}^d)$ will be denoted by
$(\cdot,\cdot)$.
Let $C^{\infty}_c(\mathbb{T}^d\times (0,\infty))$ be the space of infinitely differentiable functions with compact support on $\mathbb{T}^d\times (0,\infty)$.
For nonnegative integers $k$ and $p\in [1,\infty]$, we use $W^{k,p}(\mathbb{T}^d)$ to denote the usual Sobolev space on $\mathbb{T}^d$ which consists of all functions $\rho:\mathbb{T}^d\rightarrow \mathbb{R} $ such that for each multiindex $\alpha$ with $|\alpha|:=\alpha_1+\cdots+\alpha_d\leq k$, the weak derivative $D^{\alpha}\rho$ exists
and belongs to $L^p(\mathbb{T}^d)$.
For a positive integer $a$, let
$H^a(\mathbb{T}^d)=W^{a,2}(\mathbb{T}^d)$ be the usual Sobolev space of order $a$, $H^{-a}(\mathbb{T}^d)$ stands for the topological dual of $H^a(\mathbb{T}^d)$ whose norm is denoted by $\|\cdot\|_{H^{-a}(\mathbb{T}^d)}$.
Let the bracket $\langle\cdot,\cdot\rangle$ stand for the duality between $C^{\infty}(\mathbb{T}^d)$ and $C^{\infty}(\mathbb{T}^d)^*$.

For the sake of simplicity, for $1\leq p\leq \infty$, we denote by $L^p(\mathbb{T}^d)=:L^p(\mathbb{T}^d;\mathbb{R})$, $C([0,T])=:C([0,T];\mathbb{R})$,
$L^p([0,T]\times\mathbb{T}^d)=:L^p([0,T];L^p(\mathbb{T}^d;\mathbb{R}))$ and $L^p([0,T]\times\mathbb{T}^d;\mathbb{R}^d)=:L^p([0,T];L^p(\mathbb{T}^d;\mathbb{R}^d))$. In the following, we also use $\nabla \rho$ to denote the weak derivative of $\rho$.
In the context, we employ the notation $a\lesssim b$ for $a,b\in \mathbb{R}$ which stands for $a\leq Cb$ for some constant $C>0$ independent of any parameters. Moreover,
we adopt the letters $c, C$ to denote generic positive and finite constants
and if deemed necessary, their (in-)dependence on parameters or functions will be specified.

In the following kinetic formulation, we will frequently encounter derivatives of functions
$\psi\in C^{\infty}_c(\mathbb{T}^d\times \mathbb{R})$  evaluated at the point $\xi=\rho(x,t)$. Denote by $\nabla \psi(x,\rho(x,t)):=\nabla \psi(x,\xi)|_{\xi=\rho(x,t)}$.

\subsection{Definition of the noise}
In the present paper, we consider the following type of noise.
Let $\{B^k,W^k\}_{|k|\leq K}$ be independent $d$-dimensional real-valued Brownian motions defined on a probability space $(\Omega, \mathcal{F}, \{\mathcal{F}_t\}_{t\in [0,T]}, \mathbb{P})$. We use $\mathbb{E}$ to denote the expectation with respect to $\mathbb{P}$. Let $(B^k,W^k)_{|k|\leq K}$ take values in the space
\begin{align*}
  \mathbb{R}^{\infty}=\{(x_i)_{i\in \mathbb{N}}: x_i\in \mathbb{R}^d, \forall i\in \mathbb{N}\}
\end{align*}
equipped with the metric topology of coordinate-wise convergence. For every $K\in \mathbb{N}$, we define the spectral approximation $\xi^K$ of the noise $\xi$ as follows:
\begin{align}\label{s-39-1}
  d\xi^K:=\sum_{|k|\leq K} \Big(\sin (k \cdot x)dB^k_t+\cos (k\cdot x)dW^k_t\Big).
\end{align}
Clearly, $\xi^K$ is a special case of the noise defined in \cite{WWZ22}.
In the case of $\xi^K$, we have
\begin{align}\label{ll-1}
 N_K:=\sum_{|k|\leq K}1\leq cK^d,\quad M_K:=\sum_{|k|\leq K}k^2\leq cK^{d+2}.
\end{align}

\subsection{Kinetic solution of the SPDE}
We consider the following Stratonovich equation
 \begin{align}\label{s-1}
  \left\{
    \begin{array}{ll}
      \partial_t \rho^{\varepsilon,K}=\Delta \rho^{\varepsilon,K}-\nabla\cdot  (\rho^{\varepsilon,K} V\ast \rho^{\varepsilon,K})-\sqrt{\varepsilon}\nabla\cdot  (\sqrt{ \rho^{\varepsilon,K}} \circ \xi^K), &  \\
    \rho^{\varepsilon,K}(0)=\rho_0. &
    \end{array}
  \right.
 \end{align}


The equation (\ref{s-1})
  can be written as the following It\^{o} formulation
  \begin{align}\label{s-2}
  \left\{
    \begin{array}{ll}
      \partial_t \rho^{\varepsilon,K}=\Delta \rho^{\varepsilon,K}-\nabla\cdot  (\rho^{\varepsilon,K} V\ast \rho^{\varepsilon,K})-\sqrt{\varepsilon}\nabla\cdot  (\sqrt{\rho^{\varepsilon,K}} \xi^K)+\frac{\varepsilon N_K}{8}\nabla\cdot ({(\rho^{\varepsilon,K})}^{-1}\nabla \rho^{\varepsilon,K}), & \\
     \rho^{\varepsilon,K}(0)=\rho_0. &
    \end{array}
  \right.
 \end{align}
 In the sequel, we will choose $K$ to be $K(\varepsilon)$. \textbf{For the sake of simplicity, denote by $\rho^{\varepsilon}=:\rho^{\varepsilon,K(\varepsilon)}$}.

Now, we state the definition of stochastic renormalized kinetic solution of (\ref{s-2}). Let the finite entropy dissipation space defined by
\begin{align}\label{finite-entropy}
{\rm{Ent}}(\mathbb{T}^d):=\Big\{\rho\in L^1(\mathbb{T}^d): \rho\geq0\ a.e.\ {\rm{and}}\ \int_{\mathbb{T}^d}\rho\log\rho dx<\infty\Big\}.
\end{align}
Firstly, we need the definition of kinetic measure.
\begin{definition}\label{dfn-r-1}
  Let $(\Omega,\mathcal{F},\mathbb{P})$ be a probability space with a filtration $(\mathcal{F}_t)_{t\in [0,T]}$. A kinetic measure is a mapping $q$ from $\Omega$ to the space of nonnegative, Radon measures on $\mathbb{T}^d\times (0,\infty)\times [0,T]$ denoted by $\mathcal{M}^+(\mathbb{T}^d\times (0,\infty)\times [0,T])$ that satisfies
  \begin{align*}
    (\omega,t)\in \Omega\times[0,T]\rightarrow
    \int^t_0\int_{\mathbb{R}}\int_{\mathbb{T}^d}\psi(x,\xi)dq(x,s,\xi)
  \end{align*}
  is $\mathcal{F}_t-$predictable, for every $\psi\in C^{\infty}_c(\mathbb{T}^d\times (0,\infty))$.
\end{definition}

 \begin{definition}\label{dfn-2}
Let $\rho_0\in  {\rm{Ent}} (\mathbb{T}^d)$, $\varepsilon\in (0,1), K\in \mathbb{N}$. A nonnegative function $\rho^{\varepsilon}\in L^{\infty}(\Omega\times[0,T];L^1(\mathbb{T}^d))$ is called a stochastic renormalized kinetic solution of (\ref{s-2}) with initial data $\rho_0$ if $\rho^{\varepsilon}$ is almost surely continuous $L^1(\mathbb{T}^d)-$valued $\mathcal{F}_t-$predicatable function and satisfies the following properties.
\begin{enumerate}
  \item Preservation of mass: almost surely for almost every $t\in [0,T]$, $\|\rho^{\varepsilon}(\cdot,t)\|_{L^1(\mathbb{T}^d)}=\|\rho_0\|_{L^1(\mathbb{T}^d)}$.
  \item Regularity:
  $\nabla\sqrt{\rho}\in L^2(\Omega\times[0,T];L^2(\mathbb{T}^d;\mathbb{R}^d))$, $V\ast\rho\in L^2(\Omega\times[0,T];L^2(\mathbb{T}^d))$.

  \item Regularity of kinetic measure: for all nonnegative $\psi\in C^{\infty}_c(\mathbb{T}^d\times (0,\infty))$, it holds that $q^{\varepsilon}(\psi)\geq o^{\varepsilon}(\psi)$, $\mathbb{P}-$a.s., where
 $o^{\varepsilon}: \Omega\rightarrow \mathcal{M}^+(\mathbb{T}^d\times (0,\infty)\times [0,T])$ is defined by
\begin{align*}
  o^{\varepsilon}(\psi):=4\int^T_0\int_{\mathbb{T}^d}\int_{\mathbb{R}}\psi(x,\xi)
\delta_0(\xi-\rho^{\varepsilon}) \xi |\nabla\sqrt{\rho^{\varepsilon}}|^2d\xi dxdt.
\end{align*}

\item Vanishing at infinity: we have
     \begin{align}\label{k-27}
       \liminf_{M\rightarrow \infty}\mathbb{E}\Big[q^{\varepsilon}(\mathbb{T}^d\times [0,T]\times [M,M+1])\Big]=0.
     \end{align}
\item The equation: the kinetic function $\chi^{\varepsilon}(x,\xi,t):=I_{0<\xi<\rho^{\varepsilon}(x,t)}$ and the kinetic measure $q^{\varepsilon} $ satisfy that, for all
  $\psi\in C^{\infty}_c(\mathbb{T}^d\times (0,\infty))$, for almost every $t\in [0,T]$, $\mathbb{P}-$a.s.,
     \begin{align}\notag
      &\int_{\mathbb{R}}\int_{\mathbb{T}^d}\chi^{\varepsilon}(x,\xi,t)\psi(x,\xi)dxd\xi
      =\int_{\mathbb{R}}\int_{\mathbb{T}^d}\bar{\chi}(\rho_0(x),\xi)\psi(x,\xi)dxd\xi
      +\int^t_0\int_{\mathbb{R}}\int_{\mathbb{T}^d}\chi^{\varepsilon}\Delta_x\psi dxd\xi ds\\ \notag
      +&\int^t_0\int_{\mathbb{T}^d}\nabla(\psi(x,\rho^{\varepsilon}))\cdot\rho^{\varepsilon} V\ast \rho^{\varepsilon}dxds
      -\frac{\varepsilon N_K}{8}\int^t_0\int_{\mathbb{T}^d}(\rho^{\varepsilon})^{-1}\nabla \rho^{\varepsilon}
\cdot \nabla \psi(x,\rho^{\varepsilon})dxds\\ \notag
-&\int^t_0\int_{\mathbb{R}}\int_{\mathbb{T}^d}\partial_{\xi}\psi(x,\xi)dq^{\varepsilon}
+\frac{\varepsilon M_K}{2}\int^t_0\int_{\mathbb{T}^d} \rho^{\varepsilon} \partial_{\xi}\psi(x,\rho^{\varepsilon})dxds\\ \label{s-4}
-&\sqrt{\varepsilon}\int^t_0\int_{\mathbb{T}^d}\psi(x,\rho^{\varepsilon})
\nabla\cdot(\sqrt{\rho^{\varepsilon}}d\xi^{K}).
\end{align}
\end{enumerate}
\end{definition}

  The well-posedness of (\ref{s-2}) has been proved by \cite{WWZ22}, which reads as follows.
  \begin{theorem}\label{thm-14}
Let $T\in (0,\infty)$, $\varepsilon\in (0,1)$ and $K\in \mathbb{N}$. Assume that $V$ satisfies Assumptions (A1) and (A2). For any $\rho_0\in {\rm{Ent}} (\mathbb{T}^d)$, (\ref{s-2}) admits a unique stochastic renormalized kinetic solution in the sense of Definition \ref{dfn-2}.

  \end{theorem}

\subsection{Assumptions}
We impose the same assumptions on the kernel $V$ as \cite{WWZ22}.
Let $d\geq2$. For the interaction kernel $V:[0,T]\times\mathbb{T}^d\rightarrow\mathbb{R}^d$, we assume that
\begin{description}
\item[{\bf Assumption (A1)}] $V\in L^{p^{*}}([0,T];L^{p}(\mathbb{T}^d))$ with $\frac{d}{p}+\frac{2}{p^{*}}\leq 1$, $2\leq p^{*}<\infty$ and $d<p\leq \infty$,
\item[{\bf Assumption (A2)}] $\nabla\cdot V\in L^{q^{*}}([0,T];L^{q}(\mathbb{T}^d))$ with $ \frac{d}{2q}+\frac{1}{q^{*}}\leq 1$, $1\leq q^{*}<\infty$ and $d/2<q\leq \infty$.
\end{description}
Define
 \begin{align}
 p'=\frac{2p}{p-d},\  q'=\frac{2q}{2q-d}.
 \end{align}
Clearly, we have $\frac{d}{p}+\frac{2}{p'}= 1$ and $\frac{d}{2q}+\frac{1}{q'}=1$, which imply that $p^{*}\geq p'$ and $q^{*}\geq q'$.

\subsection{A priori estimates}\label{sec-10}

We need the following regularity of $\sqrt{h}V\ast h$.
\begin{lemma}\label{kernelestimate}
 Assume that $V\in L^p(\mathbb{T}^d)$, $h\in L^1(\mathbb{T}^d)$, $\nabla\sqrt{h}\in L^2(\mathbb{T}^d;\mathbb{R}^d)$, for some $p>2$, it gives
\begin{align}\label{r-10}
\|\sqrt{h}V\ast h\|_{L^2(\mathbb{T}^d)}
\leq c(d)\|\nabla\sqrt{h}\|^{\frac{d}{p}}_{L^{2}(\mathbb{T}^{d})}
\|V\|_{L^p(\mathbb{T}^d)}\|h\|^{\frac{3}{2}-\frac{d}{2p}}_{L^1(\mathbb{T}^d)}.
\end{align}

\end{lemma}

\begin{proof}
Let $s$ be the conjugate number of $p/2$, by H\"older and convolution Young inequalities, we have
\begin{align*}
\|\sqrt{h}V\ast h\|_{L^2(\mathbb{T}^d)}
\leq\|\sqrt{h}\|_{L^{2s}(\mathbb{T}^d)}\|V\|_{L^p(\mathbb{T}^d)}\|h\|_{L^1(\mathbb{T}^d)}.
\end{align*}
Owing to Gagliardo-Nirenberg interpolation inequality in \cite{BM18}, it follows that
\begin{align}\label{r-12}
  \|\sqrt{h}\|_{L^{2s}(\mathbb{T}^d)}\leq c(d)\|\nabla\sqrt{h}\|^{\frac{d}{2}(1-1/s)}_{L^{2}(\mathbb{T}^{d})}
\|\sqrt{h}\|^{1-\frac{d}{2}(1-1/s)}_{L^{2}(\mathbb{T}^{d})},
\end{align}
then,
\begin{align*}
\|\sqrt{h}V\ast h\|_{L^2(\mathbb{T}^d)}\leq&c(d)\|\nabla\sqrt{h}\|^{\frac{d}{2}(1-1/s)}_{L^{2}(\mathbb{T}^{d})}
\|\sqrt{h}\|^{1-\frac{d}{2}(1-1/s)}_{L^{2}(\mathbb{T}^{d})}\|V\|_{L^p(\mathbb{T}^d)}\|h\|_{L^1(\mathbb{T}^d)},
\end{align*}
 Note that $1-1/s=2/p$, we get the desired result (\ref{r-10}).
\end{proof}

We also need the product rule for weak derivatives, whose proof can be found in \cite{WWZ22}.
\begin{lemma}\label{lem-r-1}
\begin{description}
\item[(i)]Let $f\in L^1(\mathbb{T}^d)$ be nonnegative and the weak derivative $\nabla\sqrt{f}\in L^2(\mathbb{T}^d;\mathbb{R}^d)$, then the weak derivative $\nabla f=2\sqrt{f}\nabla\sqrt{f}$ for almost every $x\in \mathbb{T}^d$. Moreover, we have $\nabla f\in L^1(\mathbb{T}^d;\mathbb{R}^d)$.
  \item[(ii)] Let $f\in L^1(\mathbb{T}^d)$ and $g\in L^1(\mathbb{T}^d)$ be nonnegative functions, and the weak derivatives $\nabla\sqrt{f}\in L^2(\mathbb{T}^d;\mathbb{R}^d)$ and $\nabla\sqrt{g}\in L^2(\mathbb{T}^d;\mathbb{R}^d)$. Assume that $V$ satisfies Assumption (A1), then the weak derivative $\nabla\cdot(fV\ast g)=\nabla f\cdot V\ast g+fV\ast(\nabla g)$ holds for almost every $x\in \mathbb{T}^d$, where $V\ast(\nabla g):=\int_{\mathbb{T}^d}V(y)\cdot\nabla_x g(x-y)dy$.
  \item[(iii)] Let $f\in L^1(\mathbb{T}^d)$ and $g\in L^1(\mathbb{T}^d)$ be nonnegative functions, and the weak derivative $\nabla\sqrt{f}\in L^2(\mathbb{T}^d;\mathbb{R}^d)$. Assume that $V$ satisfies Assumptions (A1) and (A2), then the weak derivative $\nabla\cdot(fV\ast g)=\nabla f\cdot V\ast g+f (\nabla \cdot V)\ast g$ holds for almost every $x\in \mathbb{T}^d$, where $(\nabla \cdot V)\ast g:=\int_{\mathbb{T}^d}(\nabla\cdot V(x-y)) g(y)dy$.
\end{description}
\end{lemma}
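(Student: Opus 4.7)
The plan for (i) is to set $u := \sqrt{f}$, which lies in $H^1(\mathbb{T}^d)$ by the hypotheses $f \in L^1$ and $\nabla\sqrt{f} \in L^2$, and to approximate it by mollification $u_\eta := \psi_\eta \ast u$. For each smooth $u_\eta$ the classical chain rule gives $\nabla(u_\eta^2) = 2u_\eta \nabla u_\eta$. The convergence $u_\eta \to u$ in $H^1(\mathbb{T}^d)$ together with Cauchy--Schwarz lets me pass to the limit in $L^1(\mathbb{T}^d)$ on both sides, identifying the distributional derivative $\nabla f = 2\sqrt{f}\nabla\sqrt{f}$; its $L^1$ integrability is then immediate from
\begin{equation*}
\|2\sqrt{f}\nabla\sqrt{f}\|_{L^1(\mathbb{T}^d)} \le 2\|f\|_{L^1(\mathbb{T}^d)}^{1/2}\|\nabla\sqrt{f}\|_{L^2(\mathbb{T}^d)}.
\end{equation*}

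For (ii) I would regularize both factors, $f_\eta := \psi_\eta \ast f$ and $g_\eta := \psi_\eta \ast g$, and use differentiation under the convolution to get the pointwise identity
\begin{equation*}
\nabla\cdot(f_\eta\, V\ast g_\eta) = \nabla f_\eta \cdot V\ast g_\eta + f_\eta\, V\ast(\nabla g_\eta).
\end{equation*}
Young's convolution inequality, combined with $g,\nabla g \in L^1(\mathbb{T}^d)$ (the latter by applying part (i) to $g$), yields $V\ast g_\eta \to V\ast g$ and $V\ast(\nabla g_\eta) \to V\ast(\nabla g)$ in $L^p(\mathbb{T}^d)$. To upgrade these to $L^1$ convergence of the three terms in the product rule, I would invoke the a priori bound (\ref{r-10}) (and its obvious variant with mixed arguments) to control $\|\sqrt{f_\eta}\, V\ast g_\eta\|_{L^2}$ and $\|\sqrt{f_\eta}\, V\ast(\nabla g_\eta)\|_{L^2}$, then pair with $\sqrt{f_\eta}$ or $\nabla\sqrt{f_\eta}$ by Cauchy--Schwarz. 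This gives the identity in the distributional sense, hence almost everywhere.

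Part (iii) follows the same strategy, but since $\nabla g$ is no longer available the derivative must be shifted onto the kernel: for smooth $g_\eta$ an integration by parts on the torus identifies $V\ast(\nabla g_\eta) = (\nabla\cdot V)\ast g_\eta$, and the right-hand expression continues to make sense for the unmollified $g$ because $\nabla\cdot V \in L^q$ and $g \in L^1$ give $(\nabla\cdot V)\ast g \in L^q$ by Young. The main obstacle, and the step on which the lemma genuinely turns, is verifying that the limiting product $f\,(\nabla\cdot V)\ast g$ lies in $L^1(\mathbb{T}^d)$. This is exactly where the threshold $q > d/2$ from Assumption (A2) is used: applying Gagliardo--Nirenberg to $\sqrt{f} \in H^1(\mathbb{T}^d)$ as in (\ref{r-12}) places $\sqrt{f}$ in $L^{2q/(q-1)}(\mathbb{T}^d)$, so $f \in L^{q/(q-1)}(\mathbb{T}^d)$, which H\"older-pairs with $(\nabla\cdot V)\ast g \in L^q(\mathbb{T}^d)$. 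An analogous balancing of exponents (now with $p > d$) underlies the convergence in (ii). Once these integrability estimates are in hand, the limit passage in every term is routine mollification bookkeeping.
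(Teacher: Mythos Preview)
The paper does not prove this lemma itself; it simply cites \cite{WWZ22}. So there is no in-paper argument to compare against, and I will just assess your plan on its own merits.

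Your outline is correct and standard. Part (i) is the usual chain rule for $u^2$ with $u=\sqrt f\in H^1(\mathbb{T}^d)$. For (ii) and (iii) you correctly isolate the decisive integrability balances: Gagliardo--Nirenberg on $\sqrt f\in H^1$ places $f$ in $L^{p/(p-1)}$ (under $p>d$) and in $L^{q/(q-1)}$ (under $q>d/2$), which is exactly what is needed to pair $f$ against $V\ast g\in L^p$ and $(\nabla\cdot V)\ast g\in L^q$ respectively; and your device in (iii) of shifting the derivative from $g_\eta$ onto $V$ via integration by parts is the right move. One technical point to watch: you mollify $f$ directly as $f_\eta=\psi_\eta\ast f$ and then invoke (\ref{r-10}) on $f_\eta$, which requires control of $\|\nabla\sqrt{f_\eta}\|_{L^2}$. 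This bound is indeed available---Cauchy--Schwarz inside the convolution yields $|\nabla f_\eta|^2\le 4f_\eta\,(\psi_\eta\ast|\nabla\sqrt f|^2)$ and hence $\|\nabla\sqrt{f_\eta}\|_{L^2}\le\|\nabla\sqrt f\|_{L^2}$---but it is not automatic, and you then only get \emph{weak} $L^2$-convergence of $\nabla\sqrt{f_\eta}$, which must be paired against strong $L^2$-convergence of the complementary factor $\sqrt{f_\eta}\,V\ast g_\eta$. A cleaner alternative is to mollify $\sqrt f$ instead, setting $\tilde f_\eta:=(\psi_\eta\ast\sqrt f)^2$; then $\sqrt{\tilde f_\eta}\to\sqrt f$ strongly in $H^1$ and every limit passage is truly routine. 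With either adjustment your argument goes through.
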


\section{Analysis of the McKean-Vlasov  PDE }\label{sec-3}

For every $g\in L^2([0,T];L^2(\mathbb{T}^d;\mathbb{R}^d))$, we consider
\begin{eqnarray}\label{skeleton}
\left\{
  \begin{array}{ll}
   d\rho(t)=\Delta\rho dt-\nabla\cdot (\rho V\ast\rho)dt-\nabla\cdot (\sqrt{\rho}g)dt , &  \\
    \rho(\cdot,0)=\rho_0, &
  \end{array}
\right.
\end{eqnarray}
which is called the skeleton equation of (\ref{s-1}).

According to the results of \cite{FG23} and \cite{WWZ22}, under Assumptions (A1) and (A2) on $V$, the standard $L^p(\mathbb{T}^d)-$theory ($p\geq 2$) are not available for (\ref{skeleton}). To compensate it, we derive
 an entropy dissipation estimate under Assumption (A1), where the initial data is restricted to have finite entropy:
\begin{equation}\label{t-16}
{\rm{Ent}}(\mathbb{T}^d):=\Big\{\rho\in L^1(\mathbb{T}^d): \rho\geq0\ a.e.\ {\rm{and}}\ \int_{\mathbb{T}^d}\rho\log\rho dx<\infty\Big\}.
\end{equation}

In the sequel, we will find that the equation (\ref{skeleton}) can be equipped with renormalized kinetic solution and weak solution at the same time. The former is employed to prove uniqueness, and the later is used to show existence. Under this circumstance, the proof of the equivalence between these two concepts
is necessary. To gain a deeper understanding of the regularity on the kernel $V$ required for different concepts, we will list them separately.

\subsection{Definition of renormalized kinetic solution}
Define the kinetic function $\bar{\chi}:\mathbb{R}^2\rightarrow \mathbb{R}$ by
\begin{align*}
  \bar{\chi}(s,\xi)=I_{0<\xi<s}.
\end{align*}
Suppose that $\rho$ is a solution of (\ref{skeleton}),
the kinetic function $\chi$ of $\rho$ is defined by $\chi(x,t,\xi)=\bar{\chi}(\rho(x,t),\xi)=I_{0<\xi<\rho(x,t)}$ for every $(x,\xi,t)\in \mathbb{T}^d\times \mathbb{R}\times [0,T]$. The following identities hold in distributional sense,
\begin{align*}
  \nabla  \chi(x,\xi,t)=\delta_0(\xi-\rho(x,t))\nabla \rho(x,t),\quad \partial_{\xi}\chi(x,\xi,t)=\delta_0(\xi)-\delta_0(\xi-\rho(x,t)).
\end{align*}
As a result, formally, we deduce from (\ref{skeleton}) that
\begin{align}\label{eqq-22}
\partial_t\chi= \Delta_x\chi-(\partial_{\xi}\sqrt{\xi})g(x,t)\cdot\nabla \chi
+(\nabla g(x,t))\sqrt{\xi}\partial_{\xi}\chi
 -\delta_0(\xi-\rho)\nabla\cdot (\rho V\ast \rho)+\partial_{\xi}q,
\end{align}
where $\chi(\cdot,0)=\bar{\chi}(\rho_0)$ and $q$ is the parabolic defect measure on $\mathbb{T}^d\times \mathbb{R}\times [0,T]$ given by
\begin{align}\label{k-10}
q=4\delta_0(\xi-\rho)\xi|\nabla\sqrt{\rho}|^2.
\end{align}
Moreover, (\ref{eqq-22}) can be rewritten in the following form
\begin{align}\label{eqq-23}
\partial_t\chi=\Delta_x\chi-\partial_{\xi}(g(x,t)\cdot\sqrt{\xi}\nabla \chi)
+\nabla\cdot (g(x,t)\sqrt{\xi}\partial_{\xi}\chi)-\delta_0(\xi-\rho)\nabla\cdot (\rho V\ast \rho)+\partial_{\xi}q,
\end{align}
on $\mathbb{T}^d\times \mathbb{R}\times (0,T)$.


Now, we state the definition of renormalized kinetic solution rigorously based on  (\ref{eqq-23}).
\begin{definition}\label{dfn-3}
 Let $\rho_0\in {\rm{Ent}} (\mathbb{T}^d)$. A nonnegative function $\rho\in L^{\infty}([0,T];L^1(\mathbb{T}^d))$ is
a renormalized kinetic solution to (\ref{skeleton}) with initial value $\rho_0$ if $\rho$ satisfies
\begin{enumerate}
  \item $\nabla\sqrt{\rho}\in L^2([0,T];L^2(\mathbb{T}^d;\mathbb{R}^d))$, $V\ast\rho\in L^2([0,T];L^2(\mathbb{T}^d))$.
  \item There exists a subset $\mathcal{P}\subseteq (0,T]$ of Lebesgue measure zero such that for every $t\in [0, T]\setminus \mathcal{P}$, for any $\psi\in C_c^{\infty}(\mathbb{T}^d\times (0,\infty))$ with $\psi(x,0)=0$, the kinetic function $\chi$ of $\rho$ and the parabolic defect measure $q$ satisfy that
\begin{align}\notag
\int_{\mathbb{T}^d}\int_{\mathbb{R}}\chi(x,\xi,t)\psi(x,\xi)dxd\xi
=&\int_0^t\int_{\mathbb{R}}\int_{\mathbb{T}^d}\chi\Delta_x\psi dxd\xi ds-\int_0^t\int_{\mathbb{R}}\int_{\mathbb{T}^d}q\partial_{\xi}\psi dxd\xi ds\\ \notag
+&\int_0^t\int_{\mathbb{T}^d} 2\rho \nabla \sqrt{\rho}\cdot g(x,s)(\partial_{\xi}\psi)(x,\rho)dxds
+\int_0^t\int_{\mathbb{T}^d}\sqrt{\rho}g(x,s)\cdot(\nabla \psi)(x,\rho)dxds\\
\label{eqq-20}
-& \int_0^t\int_{\mathbb{T}^d}\psi(x,\rho)\nabla\cdot(\rho V\ast \rho) dxds
+\int_{\mathbb{R}}\int_{\mathbb{T}^d}\bar{\chi}(\rho_0(x),\xi)\psi(x,\xi)dxd\xi.
\end{align}
\end{enumerate}
\end{definition}
We point out that the regularity (1) in Definition \ref{dfn-3} is not strong enough to guarantee the global integrability of the parabolic defect measure $q$, so
the test function $\psi$ in Definition \ref{dfn-3} is required to have compact support with respect to $x$ and $\xi$.

\begin{remark}\label{remark-1}
When $V$ satisfies Assumption (A1), the kernel term $\int_0^t\int_{\mathbb{T}^d}\psi(x,\rho)\nabla\cdot (\rho V\ast \rho)dxds$ is well-defined. In fact, by using (ii) in Lemma \ref{lem-r-1}, we have
\begin{align}\label{t-17-1}
	\int_{0}^{T}\int_{\mathbb{T}^{d}}|\nabla\cdot\left(\rho V(t)\ast \rho\right)|dxdt\le\int_{0}^{T}\|\nabla \rho\cdot V(t)\ast \rho\|_{L^1(\mathbb{T}^d)}dt+\int_{0}^{T}\| \rho (V(t)\ast \nabla \rho)\|_{L^1(\mathbb{T}^d)}dt.
	\end{align}
Since the skeleton equation (\ref{skeleton}) is conservative and by the nonnegativity of the solution, we have for any $t\in [0,T]$,
\begin{align}\label{r-11}
 \|\rho(t)\|_{L^1(\mathbb{T}^d)}=\|\rho_0\|_{L^1(\mathbb{T}^d)}.
\end{align}
With the aid of (i) in Lemma \ref{lem-r-1}, H\"{o}lder inequality, (\ref{r-10}) and  (\ref{r-11}), we have
\begin{align}
\int_{\mathbb{T}^d}|\nabla\rho\cdot V\ast\rho| dx
&\leq 2\|\nabla\sqrt{\rho}\|_{L^2(\mathbb{T}^d)}\|\sqrt{\rho}\cdot V\ast\rho\|_{L^2(\mathbb{T}^d)}\notag\\
&\leq  c(d)\|\rho_0\|^{3/2-\frac{d}{2p}}_{L^{1}(\mathbb{T}^{d})}\|\nabla\sqrt{\rho}\|^{1+\frac{d}{p}}_{L^{2}(\mathbb{T}^{d})}
\|V\|_{L^p(\mathbb{T}^d)}.\label{qq-1}
\end{align}


Since $p>d$, applying Young inequality to (\ref{qq-1}) with $(\frac{2p}{ p+d},\frac{2p}{p-d})$, it gives
\begin{align}\notag
\int_{0}^{T}\|\nabla \rho\cdot V(t)\ast \rho\|_{L^1(\mathbb{T}^d)}dt
\leq &
\int^T_0\|\nabla\sqrt{\rho}\|_{L^2(\mathbb{T}^d)}^2dt
+C(\|\rho_0\|_{L^1(\mathbb{T}^d)},d)\int^T_0\|V\|_{L^p(\mathbb{T}^d)}^{\frac{2p}{p-d}}dt\\
\label{ent-1}
\leq&\int^T_0\|\nabla\sqrt{\rho}\|_{L^2(\mathbb{T}^d)}^2dt
+C(T,p,d,\|\rho_0\|_{L^1(\mathbb{T}^d)})\int^T_0\|V\|_{L^p(\mathbb{T}^d)}^{p^{*}}dt.
\end{align}
Let $r$ be the conjugate number of $p$,
by H\"{o}lder and convolution Young inequalities, (i) in Lemma \ref{lem-r-1}, (\ref{r-12}) and (\ref{r-11}), it gives
	\begin{align}\notag
	\int_{0}^{T}\|\rho(V(t)\ast \nabla \rho)\|_{L^1(\mathbb{T}^d)}dt\lesssim&\|\rho_0\|^{3/2-\frac{d}{2p}}_{L^1(\mathbb{T}^d)}
\int_{0}^{T}\|\nabla\sqrt{\rho}\|^{1+\frac{d}{p}}_{L^2(\mathbb{T}^d)}\|V(t)\|_{L^p(\mathbb{T}^d)}dt\\
\label{k-26}
\lesssim& \|\rho_0\|^{3/2-\frac{d}{2p}}_{L^1(\mathbb{T}^d)}\Big(\int_{0}^{T}\|\nabla\sqrt{\rho}\|^{2}_{L^2(\mathbb{T}^d)} dt\Big)^{\frac{d+p}{2p}}\|V\|_{L^{p^*}([0,T];L^p(\mathbb{T}^d))}.	
\end{align}
It follows from (\ref{ent-1}) and (\ref{k-26}) that (\ref{t-17-1}) holds.
We emphasize that $p>d$ in Assumption (A1) is necessary to apply the Young inequality to (\ref{qq-1}).

\end{remark}
\subsection{Uniqueness of renormalized kinetic solution}
Before giving the result of $L^1$-uniqueness, we need the following technical lemma proved by \cite[Lemma 7]{FG23}.
\begin{lemma}\label{lem-4}
Let $(X,S,\mu)$ be a measure space. For $K\in\mathbb{N}$, let $\{f_k:X\rightarrow\mathbb{R}\}_{k\in\{1,2,\dots,K\}}\subseteq L^1(X)$, and $\{B_{n,k}\subseteq X\}_{n\in\mathbb{N}}\subseteq  S$ be disjoint subsets for every $k\in\{1,2,...,K\}$. Then,
\begin{equation*}
\liminf_{n\rightarrow\infty}\Big(n\sum_{k=1}^K\int_{B_{n,k}}|f_k|d\mu\Big)=0.
\end{equation*}
\end{lemma}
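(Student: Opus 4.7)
The plan is to reduce the conclusion to the classical elementary fact that every summable nonnegative sequence $(a_n)$ must satisfy $\liminf_{n\to\infty} n\, a_n = 0$, since otherwise $\sum_n a_n$ would dominate a multiple of the (divergent) harmonic series.

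First I would exploit the disjointness hypothesis. For each fixed $k \in \{1,\ldots,K\}$, the sets $\{B_{n,k}\}_{n\in\mathbb{N}}$ are pairwise disjoint in $X$, so countable additivity of the finite measure $|f_k|\, d\mu$ yields
\[
\sum_{n=1}^{\infty}\int_{B_{n,k}} |f_k|\, d\mu \;\le\; \int_{X} |f_k|\, d\mu \;<\; \infty,
\]
where the finiteness comes from $f_k \in L^1(X)$. Summing over the finite index set $k=1,\ldots,K$ and setting $a_n := \sum_{k=1}^{K}\int_{B_{n,k}} |f_k|\, d\mu \ge 0$, I obtain $\sum_{n=1}^{\infty} a_n < \infty$.

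Second, I would apply the elementary fact that any nonnegative summable sequence $(a_n)$ satisfies $\liminf_{n\to\infty} n\, a_n = 0$. The argument is a one-line contradiction: if $n a_n \ge c > 0$ for all sufficiently large $n$, then $a_n \ge c/n$ eventually, contradicting summability via divergence of the harmonic series. Applied to the sequence $a_n$ built in the previous paragraph, this immediately gives
\[
\liminf_{n\to\infty}\Bigl(n\sum_{k=1}^{K}\int_{B_{n,k}} |f_k|\, d\mu\Bigr) = 0,
\]
which is the statement of the lemma.

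No substantive obstacle arises: the entire argument is a soft-analysis consequence of countable additivity along each of the $K$ disjoint families together with divergence of the harmonic series. The finiteness of $K$ is essential, since it is precisely what allows the summation in $k$ to be exchanged freely with the $\liminf$ in $n$ without any extra integrability input.
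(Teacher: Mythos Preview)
Your proof is correct and is the standard argument: summability of $a_n=\sum_{k=1}^K\int_{B_{n,k}}|f_k|\,d\mu$ follows from disjointness and $f_k\in L^1$, and then $\liminf_n n a_n=0$ is forced by divergence of the harmonic series. The paper does not give its own proof of this lemma but simply cites Lemma~3.2 of \cite{FG}, whose argument is precisely the one you wrote.
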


The uniqueness of (\ref{skeleton}) will be proved by the doubling variables method, which has been applied to stochastic conservation laws successfully by \cite{DV10,DWZZ20}.
\begin{proposition}\label{L1uniq}
Assume that $V$ satisfies Assumptions (A1) and (A2).
 Let $g\in L^2([0,T];L^2(\mathbb{T}^d;\mathbb{R}^d))$ and $\rho_0^1, \rho_0^2\in {\rm{Ent}} (\mathbb{T}^d)$. Assume that $\rho^1, \rho^2\in L^{\infty}([0,T];L^1(\mathbb{T}^d))$  are renormalized kinetic solutions of (\ref{skeleton}) in the sense of Definition \ref{dfn-3} with control $g$ and initial data $\rho_0^1, \rho_0^2$. If $\rho_0^1=\rho_0^2$ for almost every $x\in\mathbb{T}^d$, then
\begin{equation}\label{qq-31-1}
\rho^1(t,x)=\rho^2(t,x),\ \ \text{for almost every }(t,x)\in[0,T]\times\mathbb{T}^d.
\end{equation}

\end{proposition}

\begin{proof}
  Let $\chi^1,\chi^2$ be kinetic functions of $\rho^1$ and $\rho^2$ with the corresponding parabolic defect measures $q^1$ and $q^2$.
Denote by $\mathcal{P}^1,\mathcal{P}^2\subseteq(0,T]$ the zero sets appearing in Definition \ref{dfn-3}, and define $\mathcal{P}:=\mathcal{P}^1\cup\mathcal{P}^2$.
Let $\kappa^s\in C^{\infty}_c(\mathbb{T}^d;[0,\infty))$ and $\kappa^v\in C_c^{\infty}(\mathbb{R};[0,\infty))$ be two standard mollifiers. For any $ \varepsilon,\delta\in(0,1)$, let $\kappa^{\varepsilon,\delta}:(\mathbb{T}^d)^2\times\mathbb{R}^2\rightarrow[0,\infty)$ be defined by
\begin{equation*}
\kappa^{\varepsilon,\delta}(x,y,\xi,\eta)
=\Big(\varepsilon^{-d}\kappa^s(\frac{x-y}{\varepsilon})\Big)\Big(\delta^{-1}\kappa^{v}(\frac{\xi-\eta}{\delta})\Big).
\end{equation*}
For any $M>0$, let $\zeta^M:\mathbb{R}\rightarrow[0,1]$ be a  continuous piecewise linear function satisfying
\begin{eqnarray}
\zeta^M(\xi)=\left\{
  \begin{array}{ll}
   0
  & \ {\rm{if}}\ |\xi|\leq\frac{1}{M},\\
   M\Big(|\xi|-\frac{1}{M}\Big) & \ {\rm{if}}\ \frac{1}{M}\leq|\xi|\leq\frac{2}{M},\\
   1 & \ {\rm{if}} \ \frac{2}{M}\leq|\xi|\leq M,\\
   M+1-|\xi| & \ {\rm{if}}\ M\leq|\xi|\leq M+1,\\
   0 & \ {\rm{if}}\ |\xi|\geq M+1.
  \end{array}
\right.
\end{eqnarray}
For every $i\in\{1,2\}$ and $\varepsilon,\delta\in(0,1)$, let $\chi^{i,\varepsilon,\delta}:\mathbb{T}^d\times\mathbb{R}\times[0,T]\rightarrow\mathbb{R}$ be defined by
\begin{align*}
\chi_t^{i,\varepsilon,\delta}(y,\eta)=\int_{\mathbb{R}}\int_{\mathbb{T}^d}\chi_t^i(x,\xi)\kappa^{\varepsilon,\delta}(x,y,\xi,\eta)dxd\xi.
\end{align*}
Let $t\in(0,T]/\mathcal{P}$ and $M>0$, by the properties of the kinetic function, we have
\begin{align}\notag
&\int_{\mathbb{R}}\int_{\mathbb{T}^d}|\chi_t^1-\chi_t^2|^2\zeta^M(\eta)dyd\eta\\
\notag
=&\lim_{\varepsilon,\delta\rightarrow0}\int_{\mathbb{R}}\int_{\mathbb{T}^d}\Big|\chi_t^{1,\varepsilon,\delta}-\chi_t^{2,\varepsilon,\delta}\Big|^2\zeta^M(\eta)dyd\eta\\
\label{qq-28}
=&\lim_{\varepsilon,\delta\rightarrow0}\int_{\mathbb{R}}\int_{\mathbb{T}^d}\Big(\chi_t^{1,\varepsilon,\delta}+\chi_t^{2,\varepsilon,\delta}-2\chi_t^{1,\varepsilon,\delta}\chi_t^{2,\varepsilon,\delta}\Big)\zeta^M(\eta)dyd\eta.
\end{align}
Let $\varepsilon,\delta\in(0,1)$, we deduce from Definition \ref{dfn-3} that for every $i\in\{1,2\}$, as distributions on $\mathbb{T}^d\times \mathbb{R}\times (0,T)$,
\begin{align}\notag
  \partial_t \chi^{i,\varepsilon,\delta}_t(y,\eta)=&\int_{\mathbb{R}}\int_{\mathbb{T}^d}\chi_t^i\Delta_x\kappa^{\varepsilon,\delta}(x,y,\xi,\eta)dxd\xi
  -\int_{\mathbb{R}}\int_{\mathbb{T}^d}q^i_t\partial_{\xi}\kappa^{\varepsilon,\delta}(x,y,\xi,\eta)dxd\xi\\
  \notag
  +&2\int_{\mathbb{T}^d}\rho^ig(x,t)\cdot(\nabla_x \sqrt{\rho^i})(\partial_{\xi}\kappa^{\varepsilon,\delta})(x,y,\rho^i,\eta)dx\\
 \label{k-11}
  +&\int_{\mathbb{T}^d}\sqrt{\rho^i}g(x,t)\cdot(\nabla_x \kappa^{\varepsilon,\delta})(x,y,\rho^i,\eta)dx
 -\int_{\mathbb{T}^d}\nabla_x\cdot(\rho^i(V\ast \rho^i)) \kappa^{\varepsilon,\delta}(x,y,\rho^i,\eta)dx.
\end{align}


For $i=1,2$, let $\bar{\kappa}^{\varepsilon,\delta}_{i}:(\mathbb{T}^d)^2\times \mathbb{R}\times [0,T]\rightarrow [0,\infty)$ be defined by
\begin{align*}
  \bar{\kappa}^{\varepsilon,\delta}_{i,t}(x,y,\eta):=\kappa^{\varepsilon,\delta}(x,y,\rho^i(x,t),\eta).
\end{align*}

 Compared with the full expressions in \cite[(28)]{FG23}, it is sufficient to handle the additional terms associated with the nonlocal interaction kernel.
  Concretely, let $\mathbb{I}^{\varepsilon,\delta,M}_{t,ker}$ be the term generated by the kernel term in the expression of \cite[(28)]{FG23}, we have

	\begin{align*}
	\mathbb{I}^{\varepsilon,\delta,M}_{t,ker}
	=&\int_{0}^{t}\int_{\mathbb{R}}\int_{\left(\mathbb{T}^{d}\right)^{2}}\bar{\kappa}_{r,1}^{\varepsilon,\delta}\nabla\cdot(\rho^1V(r)\ast\rho^1)\left(2\chi_{r,2}^{\varepsilon,\delta}-1\right)\zeta_{M}(\eta)\notag\\
	&+\int_{0}^{t}\int_{\mathbb{R}}\int_{\left(\mathbb{T}^{d}\right)^{2}}\bar{\kappa}_{r,2}^{\varepsilon,\delta}\nabla\cdot(\rho^2V(r)\ast\rho^2)\left(2\chi_{r,1}^{\varepsilon,\delta}-1\right)\zeta_{M}(\eta).
	\end{align*}
	By the same procedure as in the proof of \cite[Theorem 4.2]{WWZ22}, an analysis of the passing to the limits shows that
	\begin{align}\notag &\lim_{M\to\infty}\left(\lim_{\beta\to0}\left(\lim_{\delta\to0}\left(\lim_{\varepsilon\to0}\mathbb{I}^{\varepsilon,\delta,M}_{t,ker}\right)\right)\right)\\
	\notag 
=&\int_{0}^{t}\int_{\mathbb{T}^{d}}\left(\mathbf{1}_{\left\{\rho^{1}=\rho^{2}\right\}}+2\mathbf{1}_{\left\{\rho^{1}<\rho^{2}\right\}}-1\right)\left(\nabla\cdot(\rho^1V(r)\ast\rho^1)-\nabla\cdot(\rho^2V(r)\ast\rho^2)\right)\\
	\notag &+\int_{0}^{t}\int_{\mathbb{T}^{d}}\left(\mathbf{1}_{\left\{\rho^{1}=\rho^{2}\right\}}+2\mathbf{1}_{\left\{\rho^{1}<\rho^{2}\right\}}-1+\mathbf{1}_{\left\{\rho^{1}=\rho^{2}\right\}}+2\mathbf{1}_{\left\{\rho^{2}<\rho^{1}\right\}}-1\right)
	\nabla\cdot(\rho^2V(r)\ast\rho^2)\\
	\label{eq-4.23}
	=:& \tilde{J}_1+\tilde{J}_2.
	\end{align}
	Thanks to $\mathbf{1}_{\left\{\rho^{2}<\rho^{1}\right\}} =1-\mathbf{1}_{\left\{\rho^{1}=\rho^{2}\right\}}-\mathbf{1}_{\left\{\rho^{1}<\rho^{2}\right\}}$, this yields $\tilde{J}_2=0$.

	Regarding $\tilde{J}_1$, with the help of the identity $\sgn(\rho^2-\rho^1)=\mathbf{1}_{\left\{\rho^{1}=\rho^{2}\right\}}+2\mathbf{1}_{\left\{\rho^{1}<\rho^{2}\right\}}-1$, we can divide $\tilde{J}_1$ into two parts
	\begin{align*}
	\tilde{J}_1=&\int_{0}^{t}\int_{\mathbb{T}^d}\sgn(\rho^2-\rho^1)\nabla\cdot\left((\rho^1-\rho^2)V(r)\ast\rho^1\right)\\
	&+\int_{0}^{t}\int_{\mathbb{T}^d}\sgn(\rho^2-\rho^1)\nabla\cdot\left(\rho^2V(r)\ast(\rho^1-\rho^2)\right)\\
	=:&\tilde{J}_{11}+\tilde{J}_{12},
	\end{align*}
	Let $\sgn^{\delta}:=(\sgn\ast\kappa_{1}^{\delta})$ for every $\delta\in(0,1)$. The uniform boundedness of $(\delta\kappa_{1}^{\delta})$ in $\delta\in(0,\beta/4)$ implies that there exists $c \in(0,\infty)$ independent of $\delta$ but depending on the convolution kernel such that for all $\delta\in(0,\beta/4)$,
	\begin{align}
	\tilde{J}_{11} =&\lim_{\delta\to0}\int_{0}^{t}\int_{\mathbb{T}^d}\sgn^{\delta}(\rho^2-\rho^1)\nabla\cdot\left((\rho^1-\rho^2)V(r)\ast\rho^1\right)\notag\\
	=&-\lim_{\delta\to0}\int_{0}^{t}\int_{\mathbb{T}^d}(\sgn^{\delta})'(\rho^2-\rho^1)(\rho^1-\rho^2)\nabla(\rho^2-\rho^1)\cdot V(r)\ast\rho^1\notag\\
	\leq&c\lim_{\delta\to0}\int_{0}^{t}\int_{\mathbb{T}^d}\mathbf{1}_{\left\{0<|\rho^{1}-\rho^{2}|<\delta\right\}}\nabla(\rho^2-\rho^1)\cdot V(r)\ast\rho^1.
	\end{align}
	Furthermore, since $\nabla(\rho^2-\rho^1)\cdot V\ast\rho^1\in L^1([0,T]\times \mathbb{T}^d)$, the dominated convergence theorem shows that $\tilde{J}_{11}=0$.
	
	Regarding the term $\tilde{J}_{12}$,
	using the chain rule, thanks to (\ref{r-10}) and Remark \ref{remark-1}, for every $t\in[0,T]$,
	\begin{align}\label{eq-4.28}
	\tilde{J}_{12}\le&C(\|\rho^2_0\|_{L^1(\mathbb{T}^d)},p,d)\int_{0}^{t}\|V(r)\|_{L^{p}(\mathbb{T}^d)}\|\nabla\sqrt{\rho^2}\|^{1+\frac{d}{p}}_{L^2(\mathbb{T}^d)}\|\rho^1-\rho^2\|_{L^{1}(\mathbb{T}^d)}\notag\\
	&+C(\|\rho^2_0\|_{L^1(\mathbb{T}^d)},q,d)\int_{0}^{t}\|\nabla\cdot V(r)\|_{L^{q}(\mathbb{T}^d)}\|\nabla\sqrt{\rho^2}\|^{\frac{d}{q}}_{L^{2}(\mathbb{T}^{d})}\|\rho^1-\rho^2\|_{L^{1}(\mathbb{T}^d)}.
	\end{align}
	Combining \eqref{eq-4.23}-\eqref{eq-4.28}, we conclude that  for every $t\in[0,T]$,
	 \begin{align*}
  \lim_{M\rightarrow\infty}\lim_{\delta\rightarrow 0}( \lim_{\varepsilon\rightarrow 0}\mathbb{I}^{\delta,M}_{t,ker})
  \leq&c(\|\rho^2_0\|_{L^{1}(\mathbb{T}^{d})},d)\int_0^t\|\nabla\sqrt{\rho^2}\|^{1+\frac{d}{p}}_{L^{2}(\mathbb{T}^{d})}
\|V\|_{L^p(\mathbb{T}^d)}\|\rho^1-\rho^2\|_{L^1(\mathbb{T}^d)}ds\\
+&C(\|\rho^2_0\|_{L^1(\mathbb{T}^d)},d)\int_0^t\|\nabla\sqrt{\rho^2}\|_{L^2(\mathbb{T}^d)}^{d/q}\|\nabla\cdot V\|_{L^{q}(\mathbb{T}^d)}
\|\rho^1-\rho^2\|_{L^1(\mathbb{T}^d)}ds,
  \end{align*}


Based on the $L^1(\mathbb{T}^d)-$contraction principle established in \cite[Theorem 8]{FG23},
we conclude that
\begin{align}\notag
\|\rho^1(t)-\rho^2(t)\|_{L^1(\mathbb{T}^d)}&\leq\|\rho^1_0-\rho^2_0\|_{L^1(\mathbb{T}^d)}
+C(\|\rho^2_0\|_{L^1(\mathbb{T}^d)},d)\int_0^t\Big(\|\nabla\sqrt{\rho^2}\|_{L^2(\mathbb{T}^d)}^{1+d/p}\|V\|_{L^p(\mathbb{T}^d)}\\
\label{r-13}
&+\|\nabla\sqrt{\rho^2}\|_{L^2(\mathbb{T}^d)}^{d/q}\|\nabla\cdot V\|_{L^q(\mathbb{T}^d)}\Big)\|\rho^1-\rho^2\|_{L^1(\mathbb{T}^d)}ds.
\end{align}
Since $p>d$ and $q>d/2$, we have $1+\frac{d}{p}<2$ and $d/q<2$. Applying Gronwall inequality to (\ref{r-13}) and by H\"{o}lder inequality, we get the desired result (\ref{qq-31-1}).
\end{proof}

By a similar method as in the proof of \cite[Proposition 9]{FG23}, we get the following result.
\begin{proposition}\label{prp-5}
For any $\rho_0\in {\rm{Ent}} (\mathbb{T}^d)$, let $\rho\in L^{\infty}([0, T];L^1(\mathbb{T}^d))$ be a renormalized kinetic solution of (\ref{skeleton}) in the sense of Definition \ref{dfn-3} with initial data $\rho_0$. Assume that Assumptions (A1) and (A2) are in force, then $\rho$ has a representative in $ C([0, T];L^1(\mathbb{T}^d))$.
\end{proposition}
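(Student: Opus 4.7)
The plan is to adapt Perthame's classical continuity argument for kinetic solutions of scalar conservation laws to the renormalized, non-local setting of (\ref{skeleton}). The key idea is to work at the level of the kinetic function $\chi(x,\xi,t)=\1_{0<\xi<\rho(x,t)}$, for which one has the clean identity $\|\chi(\cdot,t)\|_{L^2(\mathbb{T}^d\times\mathbb{R})}^2=\|\rho(\cdot,t)\|_{L^1(\mathbb{T}^d)}$, together with the Hilbert-space Radon--Riesz property: weak convergence plus norm convergence implies strong convergence. Establishing weak $L^2$-continuity of $t\mapsto\chi(\cdot,t)$ and constancy of $\|\chi(\cdot,t)\|_{L^2}$ will therefore yield strong $L^2$-continuity, which transfers to $L^1$-continuity of the corresponding representative of $\rho$ via the pointwise identity $\int_{\mathbb{R}}|\1_{0<\xi<a}-\1_{0<\xi<b}|^2\,d\xi=|a-b|$ for $a,b\ge 0$.

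Concretely, I would proceed in three steps. First, mass conservation $\|\rho(\cdot,t)\|_{L^1}=\|\rho_0\|_{L^1}$ for almost every $t$ is obtained by testing (\ref{eqq-20}) with $\varphi(x,\xi)=\psi_R(\xi)$, where $\psi_R$ is a smooth cutoff approximating $\1_{(0,R)}$: the diffusion and $x$-gradient terms vanish because $\psi_R$ does not depend on $x$, the non-local term reduces to $\int_{\mathbb{T}^d}\nabla\cdot(\rho V\ast\rho)\,dx=0$ by the divergence theorem on the torus, and the terms involving $\partial_\xi\psi_R$ are controlled via the defect structure (\ref{k-10}) together with the bound $\nabla\sqrt{\rho}\in L^2([0,T]\times\mathbb{T}^d;\mathbb{R}^d)$, and vanish as $R\to\infty$. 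Second, for each fixed $\varphi\in C_c^\infty(\mathbb{T}^d\times(0,\infty))$, the identity (\ref{eqq-20}) shows that $t\mapsto\langle\chi(\cdot,t),\varphi\rangle_{L^2(\mathbb{T}^d\times\mathbb{R})}$ is the time-integral of a function that I would show lies in $L^1([0,T])$: the local contributions are controlled by Cauchy--Schwarz using $g\in L^2([0,T]\times\mathbb{T}^d;\mathbb{R}^d)$, $\chi\in L^\infty$, and the defect regularity, while the non-local contribution uses the integrated bound (\ref{t-17-1}) derived under Assumption~(A1). Third, density of $C_c^\infty(\mathbb{T}^d\times(0,\infty))$ in $L^2(\mathbb{T}^d\times(0,\infty))$, combined with the uniform bound $\|\chi(\cdot,t)\|_{L^2}^2=\|\rho_0\|_{L^1}$, upgrades the pointwise-in-$t$ continuity of these pairings to weak continuity of the representative $t\mapsto\chi(\cdot,t)$ with values in $L^2(\mathbb{T}^d\times(0,\infty))$; since its norm is constant, Radon--Riesz yields strong $L^2$-continuity, and hence the desired $L^1$-continuity of $\rho$.

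The main obstacle I anticipate lies in the second step, specifically in handling the non-local time-integrand $\int_{\mathbb{T}^d}\varphi(x,\rho)\,\nabla\cdot(\rho V\ast\rho)\,dx$: its $L^1([0,T])$-integrability is not immediate and requires interpolating Assumption~(A1) on $V$ against the entropy-dissipation bound $\nabla\sqrt{\rho}\in L^2([0,T]\times\mathbb{T}^d;\mathbb{R}^d)$, exactly as performed in estimate (\ref{t-17-1}). This is the main new difficulty compared with the drift-free case of \cite{FG}. The mass-conservation step also deserves care, because $\psi_R$ is not compactly supported uniformly in $R$ with respect to $\xi$, but the limit $R\to\infty$ is justified by dominated convergence using the same bounds on the defect measure $q$ and on $\nabla\sqrt{\rho}$ that underpin Step~2.
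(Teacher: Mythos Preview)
Your proposal is correct and follows essentially the same approach the paper invokes: the paper simply defers to Proposition~3.4 of \cite{FG}, whose proof is precisely the Perthame/Radon--Riesz argument you describe, and the only new ingredient in the present non-local setting is the $L^1$-in-time control of the kernel term via (\ref{t-17-1}), which you correctly single out as the main addition over \cite{FG}. The only refinement I would flag is that the cutoff in Step~1 must also vanish near $\xi=0$ (test functions in Definition~\ref{dfn-3} are compactly supported in $(0,\infty)$, cf.\ the function $\zeta^M$ in Proposition~\ref{L1uniq}), and the resulting $\partial_\xi\psi$-contribution from the defect measure at large $\xi$ is handled via Lemma~\ref{lem-4} rather than plain dominated convergence.
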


\subsection{Equivalence of renormalized kinetic solutions and weak solutions}
Firstly, we give the definition of weak solution to (\ref{skeleton}). Recall that ${\rm{Ent}} (\mathbb{T}^d)$ is defined by (\ref{t-16}).
%

%
%

\begin{definition}\label{dfn-1}
Let $g\in L^2(\mathbb{T}^d\times[0,T];\mathbb{R}^d)$, and $\rho_0\in {\rm{Ent}} (\mathbb{T}^d)$. A nonnegative function $\rho\in L^{\infty}([0,T];L^1(\mathbb{T}^d))$ is called a weak solution of (\ref{skeleton}) with initial data $\rho_0$ if $\rho $ satisfies the following properties.
\begin{enumerate}
  \item
\begin{align}\label{rrr-17}
  \nabla\sqrt{\rho}\in L^2([0,T];L^2(\mathbb{T}^d;\mathbb{R}^d)),\ \ \rho V\ast\rho\in L^1([0,T];L^1(\mathbb{T}^d)).
\end{align}
  \item There exists a subset $\mathcal{P}\subseteq (0,T]$ of Lebesgue measure zero such that, for every $t\in [0,T]\setminus \mathcal{P}$ and
  for every $\psi\in C^{\infty}(\mathbb{T}^d)$,
\begin{align}\notag
  \int_{\mathbb{T}^d}\rho(x,t)\psi(x)dx=&-\int^t_0\int_{\mathbb{T}^d}2\sqrt{\rho}\nabla\sqrt{\rho}\cdot \nabla\psi dxds+\int^t_0\int_{\mathbb{T}^d}\sqrt{\rho}g\cdot \nabla \psi dxds\\
\label{eqq-19}
& +\int^t_0\int_{\mathbb{T}^d}\rho V\ast \rho\nabla\psi(x) dxds+\int_{\mathbb{T}^d}\rho_0(x)\psi(x)dx.
\end{align}
\end{enumerate}
\end{definition}
\begin{theorem}\label{thm-1}
Let $\rho_0 \in \operatorname{Ent}\left(\mathbb{T}^{d}\right)$  and  $g\in L^2([0,T];L^2(\mathbb{T}^d;\mathbb{R}^d))$. If
 $ \rho $ is a renormalized kinetic solution of (\ref{skeleton}) and an extra regularity holds that
  \begin{align}\label{cond-1}
 	\rho V\ast\rho\in L^1([0,T];L^1(\mathbb{T}^d;\mathbb{R}^d)),
 \end{align}
then $ \rho $ is a weak solution of (\ref{skeleton}). On the other hand, if $\rho$ is a weak solution of (\ref{skeleton}) and
\begin{align}\label{cond-2}
	\rho|V\ast\rho|^2\in L^1([0,T];L^1(\mathbb{T}^d)),\quad V\ast\rho\in L^2([0,T];L^2(\mathbb{T}^d;\mathbb{R}^d)),
\end{align}
then $\rho $ is a renormalized kinetic solution of (\ref{skeleton}).
\end{theorem}
\begin{proof}
(Renormalized kinetic solutions are weak solutions.) Let $\rho$  be a renormalized kinetic solution with control $g$ and initial $ \rho_0$. For every $\psi\in \mathrm{C}^{\infty}\left(\mathbb{T}^{d}\right)$, taking $\psi$ as a test function in (\ref{eqq-20}), it follows from the definition of the kinetic function $\chi$ that $\rho$ solves (\ref{skeleton}) in the weak sense of (\ref{eqq-19}) as well.

(Weak solutions are renormalized kinetic solutions.)  Let $\rho $ be a weak solution of (\ref{skeleton}). Testing (\ref{eqq-19}) with 1 and by the nonnegative of $\rho$, it gives that for almost every $t \in[0, T]$,
$$
\left\|\rho(t)\right\|_{L^1\left(\mathbb{T}^d\right)}=\left\|\rho_0\right\|_{L^1\left(\mathbb{T}^d\right)} .
$$
This, together with (\ref{cond-2}), implies that regularity properties (1) in Definition \ref{dfn-3} hold.

Let $q=4\delta_{0}(\xi-\rho)\xi|\nabla \sqrt{\rho}|^{2}$ denote the parabolic defect measure, it remains to show that $\rho$ satisfies \eqref{eqq-20}.
Let $\chi: \mathbb{T}^d\times \mathbb{R}\times [0,T]\rightarrow \mathbb{R}$ be the kinetic function of $ \rho $. For every $\delta \in (0,1)$, let $\kappa^{\delta}: \mathbb{T}^d\rightarrow \mathbb{R}$ be a standard symmetric convolution kernel of scale $\delta\in (0,1)$ and let $ \rho ^{\delta}: \mathbb{T}^d\times [0,T]\rightarrow \mathbb{R}$ be defined by $ \rho ^{\delta}=( \rho \ast\kappa^{\delta})$.  It follows from \eqref{eqq-19} and the definition of the convolution kernel that, as distributions on $\mathbb{T}^d\times (0,T)$, for every $\delta\in (0,1)$,
\begin{align*}
  \partial_t  \rho ^{\delta}
  =&\int_{\mathbb{T}^d}2\sqrt{ \rho (y,t)}\nabla\sqrt{ \rho (y,t)}\cdot \nabla_x\kappa^{\delta}(y-x)dy\\
  &
  -\int_{\mathbb{T}^d} \sqrt{ \rho (y,t)}g (y,t)\cdot\nabla_x\kappa^{\delta}(y-x)dy
  -\int_{\mathbb{T}^d}  \rho (y,t)V\ast \rho (y,t)\cdot \nabla_x\kappa^{\delta}(y-x)dy.
\end{align*}
Let $\mathcal{P}\subset[0,T]$ be the null set in Definition \ref{dfn-1}. Let $\psi\in C^{\infty}(\mathbb{T}^d\times \mathbb{R})$ and let $\Psi: \mathbb{T}^d\times \mathbb{R}\rightarrow \mathbb{R}$ be defined by $\Psi(x,\zeta)=\int^{\zeta}_0 \psi(x,\zeta')d\zeta'$. Then, for every $t\in [0,T]\setminus \mathcal{P}$,
\begin{align*}
  \int_{\mathbb{T}^d}\Psi(x, \rho ^{\delta}(x,r))dx|^t_{r=0}
  =& \int^t_0\int_{\mathbb{T}^d}\partial_t \rho ^{\delta}(x,r)\psi(x, \rho ^{\delta}(x,r))dxdr
  =:I^{\delta}_1+I^{\delta}_2+I^{\delta}_3,
\end{align*}
where
\begin{align*}
  I^{\delta}_1 &= 2\int^t_0\int_{(\mathbb{T}^d)^2}\nabla\sqrt{ \rho (y,r)}\sqrt{ \rho (y,r)}\cdot \nabla_x\kappa^{\delta}(y-x)\psi(x, \rho ^{\delta}(x,r))dydxdr,\\
  I^{\delta}_2 & =- \int^t_0\int_{(\mathbb{T}^d)^2}\sqrt{\rho(y,r)}g (y,r)\cdot\nabla_x\kappa^{\delta}(y-x)
  \psi(x, \rho ^{\delta}(x,r))dydxdr ,\\
  I^{\delta}_3 & = -\int^t_0\int_{(\mathbb{T}^d)^2} \rho (y,r)V\ast \rho (y,r)\cdot\nabla_x\kappa^{\delta}(y-x) \psi(x, \rho ^{\delta}(x,r))dydxdr.
\end{align*}
Proceeding the same process as in \cite[Theorem 14]{FG23}, we obtain that
\begin{align}\notag
\lim_{\delta\rightarrow 0}(I^{\delta}_1+I^{\delta}_2)
=& \int_0^t\int_{\mathbb{T}^d}\int_{\mathbb{R}}\chi(x,\xi,r)\Delta_x \psi(x,\xi)dxd\xi
dr
-  \int_0^t\int_{\mathbb{T}^d}\int_{\mathbb{R}}q(x,\xi,r)(\partial_{\xi} \psi)(x,\xi)dxd\xi dr\\
\label{I1 2}
&+2\int_0^t\int_{\mathbb{T}^d} \rho g \nabla \sqrt{ \rho (x,r)}\cdot \partial_{\zeta}\psi(x, \rho )dxdr
+\int_0^t\int_{\mathbb{T}^d}\sqrt{ \rho }g (x,r)\cdot\nabla_x\psi(x, \rho )dxdr.
\end{align}
To prove weak solutions are renormalized kinetic solutions, it needs to show that
\begin{align}\label{eqq-28}
  \lim_{\delta\rightarrow 0}I^{\delta}_3
   =& \int_0^t\int_{\mathbb{T}^d} \rho V\ast \rho \cdot\nabla_x \psi(x, \rho (x,r))dxdr
+  2\int_0^t\int_{\mathbb{T}^d}\sqrt{ \rho }\nabla\sqrt{ \rho }\cdot  \rho V\ast \rho \partial_{\xi}\psi(x, \rho )dxdr.
\end{align}

Since $\psi\in C^{\infty}_c(\mathbb{T}^d\times \mathbb{R})$, there exists $M\in (0,\infty)$ such that
\begin{align*}
{\rm{Supp}} (\psi)\subset \mathbb{T}^d\times [-M,M].
\end{align*}
For every $k\in \mathbb{N}_+$, let $A_k\subset \mathbb{T}^d\times [0,T]$ be defined by
\begin{align*}
  A_k=\{(y,t)\in\mathbb{T}^d\times [0,T]: \sqrt{ \rho (y,t)}\geq \sqrt{M}+k \},
\end{align*}
and let $A_0=(\mathbb{T}^d\times [0,T])\setminus A_1$. 
Let $I^{\delta}_3$ be decomposed by $I^{\delta}_3=I^{\delta}_{31}+I^{\delta}_{32}$, where
\begin{align*}
I^{\delta}_{31}
=& -\int^t_0\int_{(\mathbb{T}^d)^2} \rho (y,r)V\ast \rho (y,r)\cdot\nabla_x\kappa^{\delta}(y-x) \psi(x, \rho ^{\delta}(x,r))I_{A_0}(y,r)dydxdr,\\
I^{\delta}_{32}
=&- \int^t_0\int_{(\mathbb{T}^d)^2} \rho (y,r)V\ast \rho (y,r)\cdot\nabla_x\kappa^{\delta}(y-x) \psi(x, \rho ^{\delta}(x,r))I_{A_1}(y,r)dydxdr.
\end{align*}
Let us start with the term $I^{\delta}_{32}$. We will show that
\begin{align}\label{eqq-38}
  \limsup_{\delta\rightarrow 0} |I^{\delta}_{32}|=0.
\end{align}
Employing H\"{o}lder's inequality, it follows that
\begin{align}\label{E OF K W q4}
  |I^{\delta}_{32}|\leq & \delta^{-1}\Big(\int^t_0\int_{(\mathbb{T}^d)^2} \rho (y,r)|V\ast \rho |^2(y,r)|\delta\nabla_x\kappa^{\delta}(y-x)|
  |\psi(x, \rho ^{\delta}(x,r))|I_{A_1}(y,r)dydxdr\Big)^{\frac{1}{2}}\notag\\
 & \cdot \Big(\int^t_0\int_{(\mathbb{T}^d)^2} \rho (y,r)|\delta\nabla_x\kappa^{\delta}(y-x)|
  |\psi(x, \rho ^{\delta}(x,r))|I_{A_1}(y,r)dydxdr\Big)^{\frac{1}{2}}.
\end{align}
For every $k\in\mathbb{N}$, let $I_{k,k+1} : \mathbb{T}^d\times[0,T] \rightarrow \{0, 1\}$ denote the indicator function of $A_k\setminus A_{k+1}$. By the definition of $A_k$, it holds that
\begin{align}
  \sqrt{M}+k\leq \sqrt{ \rho (y,s)}< \sqrt{M}+k+1,\quad  {\text{on}}\  A_k\setminus A_{k+1}.
\end{align}
Then, we get
\begin{align}\label{E OF K W q1}
  &\int^t_0\int_{(\mathbb{T}^d)^2} \rho (y,r)|\delta\nabla_x\kappa^{\delta}(y-x)|
  |\psi(x, \rho ^{\delta}(x,r))|I_{A_1}(y,r)dydxdr\nonumber\\
 = & \int^t_0\int_{(\mathbb{T}^d)^2} \rho (y,r)|\delta\nabla_x\kappa^{\delta}(x)|
  |\psi(y+x, \rho ^{\delta}(y+x,r))|I_{A_1}(y,r)dydxdr\nonumber\\
  = &\sum^{\infty}_{k=1}\int^t_0\int_{(\mathbb{T}^d)^2} \rho (y,r)|\delta\nabla_x\kappa^{\delta}(x)|
  |\psi(y+x, \rho ^{\delta}(y+x,r))|I_{k,k+1}(y,r)dydxdr\nonumber\\
  \leq & \sum^{\infty}_{k=1}\int^t_0\int_{(\mathbb{T}^d)^2}(\sqrt{M}+k+1)^2|\delta\nabla_x\kappa^{\delta}(x)|
  |\psi(y+x, \rho ^{\delta}(y+x,r))|I_{k,k+1}(y,r)dydxdr.
\end{align}
For every $x\in \mathbb{T}^d$ and $k\in \mathbb{N}$, let $B^{\delta}_{k,x}\subseteq \mathbb{T}^d\times [0,T]$ be defined by
\begin{align*}
B^{\delta}_{k,x}=\{(y,r)\in \mathbb{T}^d\times [0,T]:|\sqrt{ \rho (y,r)}-(\sqrt{ \rho })^{\delta}(y+x,r)|I_{k,k+1}(y,r)\geq k\},
\end{align*}
where for each $(y,r)\in \mathbb{T}^d\times [0,T]$,
\begin{align*}
  (\sqrt{ \rho })^{\delta}(y,r):=\int_{\mathbb{T}^d}\sqrt{ \rho (y',r)}\kappa^{\delta}
  (y-y')dy'.
\end{align*}
 Since the square root function $\sqrt{\cdot}$ is concave, it then follows from H\"older's inequality
 that
$$
\left(\sqrt{ \rho }\right)^{\delta}(y+x, r) \leq \sqrt{ \rho ^{\delta}(y+x, r)}.
$$
Therefore, for every $(y, r) \in\left(\left(A_{k} \backslash A_{k+1}\right) \cap \operatorname{Supp}\left[\psi\left(\cdot+x, \rho^{\delta}(\cdot+x, \cdot)\right)\right]\right)$,
$$
\sqrt{ \rho (y, r)}-\left(\sqrt{ \rho }\right)^{\delta}(y+x, r) \geq \sqrt{ \rho (y, r)}-\sqrt{ \rho ^{\delta}(y+x, r)} \geq k,
$$
it implies that
\begin{align}\label{E OF K W q2}
(A_k\setminus A_{k+1})\cap {\rm{Supp}}(\psi(\cdot+x, \rho ^{\delta}(\cdot+x,r)))\subseteq B^{\delta}_{k,x}.
\end{align}
Returning to \eqref{E OF K W q1}, it follows from $\psi\in C^{\infty}_c(\mathbb{T}^d\times \mathbb{R})$ and \eqref{E OF K W q2} that there exists $C\in(0,\infty)$ such
that
\begin{align}\label{E OF K W q3}
  &\int^t_0\int_{(\mathbb{T}^d)^2} \rho (y,r)|\delta\nabla_x\kappa^{\delta}(y-x)|
  |\psi(x, \rho ^{\delta}(x,r))|I_{A_1}(y,r)dydxdr\nonumber\\
  \leq & C\int_{\mathbb{T}^d}\sum^{\infty}_{k=1}(\sqrt{M}+k+1)^2|B^{\delta}_{k,x}|
  |\delta\nabla_x\kappa^{\delta}(x)|
  dx.
\end{align}
It remains to estimate the measure of the sets $\left\{B_{k, x}^{\delta}\right\}_{k \in \mathbb{N}, x \in \mathbb{T}^{d}}$.
Let $x \in \mathbb{T}^{d}$ and $k \in \mathbb{N}$. It follows from Chebyshev's inequality, H\"older's inequality, Jensen's inequality and the fundamental theorem of calculus that, omitting the integration variables,
$$
\begin{aligned}
\left|B_{k, x}^{\delta}\right| & \leq \frac{1}{k^{2}} \int_{0}^{T} \int_{\mathbb{T}^{d}}\left|\sqrt{ \rho (y, t)}-\left(\sqrt{ \rho }\right)^{\delta}(y+x, t)\right|^{2} \mathbf{1}_{k, k+1}(y, t) \\
& \leq \frac{1}{k^{2}} \int_{0}^{T} \int_{\left(\mathbb{T}^{d}\right)^{2}}\left|\sqrt{ \rho (y, t)}-\sqrt{ \rho (y', t)}\right|^{2} \kappa^{\delta}\left(y+x-y^{\prime}\right) \mathbf{1}_{k, k+1}(y, t) \\
& \leq \frac{1}{k^{2}} \int_{0}^{T} \int_{\left(\mathbb{T}^{d}\right)^{2}}\left(\int_{0}^{1}\left|
\nabla\sqrt{ \rho }\left(s y+(1-s) y^{\prime}, t\right)\right|^{2} \mathrm{~d} s\right)\left|y-y^{\prime}\right|^{2} \kappa^{\delta}\left(y+x-y^{\prime}\right) \mathbf{1}_{k, k+1}(y, t).
\end{aligned}
$$
Since $\left|y-y^{\prime}\right| \leq|x|+\delta$ on the support of the convolution kernel, we get
$$
\left|B_{k, x}^{\delta}\right| \leq \frac{\left(\delta^{2}+|x|^{2}\right)}{k^{2}} \int_{0}^{T} \int_{\left(\mathbb{T}^{d}\right)^{2}}\left(\int_{0}^{1}\left|\nabla
\sqrt{ \rho }\left(s y+(1-s) y^{\prime}, t\right)\right|^{2} \mathrm{~d} s\right) \kappa^{\delta}\left(y+x-y^{\prime}\right) \mathbf{1}_{k, k+1}(y, t) .
$$
Therefore, it gives that
\begin{align*}
  \sum^{\infty}_{k=1}(\sqrt{M}+k+1)^2|B^{\delta}_{k,x}|
  &\leq  \sum^{\infty}_{k=1}\Big(
  \frac{(\delta^2+|x|^2)(\sqrt{M}+k+1)^2}{k^2}\\
  &\times \int^T_0\int_{(\mathbb{T}^d)^2}
\Big(\int^1_0|\nabla\sqrt{ \rho }(sy+(1-s)y',t)|^2ds\Big)\kappa^{\delta}(y+x-y')I_{k,k+1}(y,t)
dy'dydt\Big)\\
\leq&c(M)(\delta^2+|x|^2)\int^T_0\int_{(\mathbb{T}^d)^2}
\Big(\int^1_0|\nabla\sqrt{ \rho }(sy+(1-s)y',t)|^2ds\Big)\kappa^{\delta}(y+x-y')I_{A_1}(y,t)
dy'dydt\Big)\\
\leq&c(M)(\delta^2+|x|^2)\|\nabla\sqrt{ \rho }\|_{L^2([0,T];L^2(\mathbb{T}^d)} .
\end{align*}
Returning to \eqref{E OF K W q3}, the above inequality implies that
\begin{align*}
  &\int^t_0\int_{(\mathbb{T}^d)^2} \rho (y,r)|\delta\nabla_x\kappa^{\delta}(y-x)|
  |\psi(x, \rho ^{\delta}(x,r))|I_{A_1}(y,r)dydxdr\\
  \leq &\int_{\mathbb{T}^d}c(M)(\delta^2+|x|^2)\|\nabla\sqrt{ \rho }\|_{L^2([0,T];L^2(\mathbb{T}^d)}
  |\delta\nabla_x\kappa^{\delta}(x)|dx\\
  \leq & c(M)\delta^2\|\nabla\sqrt{ \rho }\|_{L^2([0,T];L^2(\mathbb{T}^d)}.
\end{align*}
As a result, we deduce from \eqref{E OF K W q4} that
\begin{align}\label{eqq-32}
  |I^{\delta}_{32}|\leq & c(M)\|\nabla\sqrt{ \rho }\|_{L^2([0,T];L^2(\mathbb{T}^d)}^{1/2} \Big(\int^t_0\int_{(\mathbb{T}^d)^2} \rho |V\ast \rho |^2(y,r)\nonumber\\
  &\times|\delta\nabla_x\kappa^{\delta}(y-x)|
  |\psi(x, \rho ^{\delta}(x,r))|I_{A_1}(y,r)dydxdr\Big)^{\frac{1}{2}}.
\end{align}
For $\delta=1$, let $\kappa=\kappa^{1}$, $c_{0}=\int_{\mathbb{T}^{d}}|\nabla \kappa(x)| \mathrm{d} x$, the functions $\left\{c_{0}^{-1}\left|\delta \nabla \kappa^{\delta}\right|\right\}_{\delta \in(0,1)}$ are a Dirac sequence on $\mathbb{T}^{d}$. Thanks to (\ref{cond-2}), we have
\begin{align*}
  &\int_{0}^{T}\int_{\mathbb{T}^d} \rho (y,r)|V\ast \rho (y,r)|^2dydr<\infty,
\end{align*}
it then follows from the definition of $A_{1}$ and the dominated convergence theorem that
\begin{align}\label{eqq-37}
  \limsup_{\delta\rightarrow 0}\int^t_0\int_{(\mathbb{T}^d)^2} \rho |V\ast \rho |^2(y,r)|\delta\nabla_x\kappa^{\delta}(y-x)|
  |\psi(x, \rho ^{\delta}(x,r))|I_{A_1}(y,r)dydxdr=0.
\end{align}
Combining (\ref{eqq-32}) and (\ref{eqq-37}),  we have
$
  \lim\sup_{\delta\rightarrow 0} |I^{\delta}_{32}|=0.
$

It remains to make estimates of $I^{\delta}_{31}$. The chain rule implies that
\begin{align*}
 \nabla \psi(x, \rho ^{\delta}(x,r))=\nabla_x \psi(x, \rho ^{\delta}(x,r))
 +\partial_{\zeta}\psi(x, \rho ^{\delta}(x,r))\nabla \rho ^{\delta},
\end{align*}
therefore, after integration by parts in the $x-$variable, we obtain that $I^{\delta}_{31}=: J^{\delta}_1+J^{\delta}_2$, where
\begin{align*}
J^{\delta}_1
= & \int^t_0\int_{(\mathbb{T}^{d})^2} \rho (y,r)V\ast \rho (y,r)\kappa^{\delta}(y-x)\cdot \nabla_x \psi(x, \rho ^{\delta}(x,r))I_{A_0}(y,r)dydxdr,\\
J^{\delta}_2= &\int^t_0\int_{(\mathbb{T}^{d})^2} \rho (y,r)V\ast \rho (y,r)\kappa^{\delta}(y-x) \partial_{\zeta}\psi(x, \rho ^{\delta}(x,r))\cdot\nabla \rho ^{\delta}(x,r)I_{A_0}(y,r)dydxdr.
\end{align*}
By the definition of $I_{A_0}$, we know that there exists $C\in(0,\infty)$ such that
$| \rho (y,r)V\ast \rho (y,r)I_{A_0}(y,r)|\leq C(M)|V\ast  \rho (y,r)|$. It combines the convolution Young's inequality proves that $| \rho V\ast \rho I_{A_0}|$ is integrable. Therefore, by the definition of $\psi$ and the dominated convergence theorem,
\begin{align}\label{eqq-26}
\lim_{\delta\rightarrow 0}J^{\delta}_1
= \int^t_0\int_{\mathbb{T}^{d}} \rho (x,r)V\ast \rho (x,r)\cdot \nabla_x\psi(x, \rho (x,r))dxdr.
\end{align}
In the following, we will prove that
\begin{align}\label{eqq-27}
\lim_{\delta\rightarrow 0}J^{\delta}_2
=2\int^t_0\int_{\mathbb{T}^{d}} \rho (x,r)V\ast \rho (x,r)\sqrt{ \rho (x,r)}\partial_{\zeta}\psi(x, \rho (x,r))\cdot
\nabla \sqrt{ \rho (x,r)}dxdr.
\end{align}
Noting that
\begin{align*}
  \nabla  \rho ^{\delta}(x,r)=& \int_{\mathbb{T}^{d}} \rho (z,r)\nabla_x\kappa^{\delta}(x-z)dz
  = 2\int_{\mathbb{T}^{d}}\sqrt{ \rho(z,r) }\nabla_z(\sqrt{ \rho (z,r)})
  \kappa^{\delta}(x-z)dz,
\end{align*}
it gives that
\begin{align}\label{E OF K W q5}
J^{\delta}_2
= & \int^t_0\int_{(\mathbb{T}^{d})^2} \rho (y,r)V\ast \rho (y,r)\kappa^{\delta}(y-x) (\partial_{\zeta}\psi)(x, \rho ^{\delta}(x,r))\nonumber\\
& \times\Big(\int_{\mathbb{T}^{d}}2\sqrt{ \rho(z,r) }\nabla_z(\sqrt{ \rho (z,r)})
  \kappa^{\delta}(z-x)dz\Big)I_{A_0}(y,r)dydxdr =: K^{\delta}_1+K^{\delta}_2,
\end{align}
where
\begin{align*}
  K^{\delta}_1  = & \int^t_0\int_{(\mathbb{T}^{d})^2} \rho (y,r)V\ast \rho (y,r)\kappa^{\delta}(y-x) (\partial_{\zeta}\psi)(x, \rho ^{\delta}(x,r))\\
&\times\Big(\int_{\mathbb{T}^{d}}2\sqrt{ \rho(z,r) }\nabla_z(\sqrt{ \rho (z,r)})
  \kappa^{\delta}(z-x)I_{A_0}(z)dz\Big)I_{A_0}(y,r)dydxdr,
  \end{align*}
  and
  \begin{align*}
  K^{\delta}_2  = & \int^t_0\int_{(\mathbb{T}^{d})^2} \rho (y,r)V\ast \rho (y,r)\kappa^{\delta}(y-x) (\partial_{\zeta}\psi)(x, \rho ^{\delta}(x,r))\\
& \times\Big(\int_{\mathbb{T}^{d}}2\sqrt{ \rho(z,r) }\nabla_z(\sqrt{ \rho (z,r)})
  \kappa^{\delta}(z-x)I_{A_1}(z)dz\Big)I_{A_0}(y,r)dydxdr.
\end{align*}
Regarding the term $K^{\delta}_2$,  since $\psi\in C^{\infty}_c(\mathbb{T}^d\times \mathbb{R})$, with the help of H\"older's inequality and the definition of $I_{A_0}$, we get
\begin{align*}
K^{\delta}_2\leq & 2\int^t_0\int_{(\mathbb{T}^{d})^2}\Big( \rho (y,r)V\ast \rho (y,r)\kappa^{\delta}(y-x) (\partial_{\zeta}\psi)(x, \rho ^{\delta}(x,r))( \rho ^{\delta}(x,r))^{\frac{1}{2}}I_{A_0}(y,r)\\
& \times\Big(\int_{\mathbb{T}^{d}}|\nabla_z\sqrt{ \rho (z,r)}|^2
  \kappa^{\delta}(z-x)I_{A_1}(z,r)dz\Big)^{\frac{1}{2}}\Big)dydxdr\\
  \leq & C(M)\int^t_0\int_{(\mathbb{T}^{d})^2}\Big(|V\ast \rho (y,r)|\kappa^{\delta}(y-x) (\partial_{\zeta}\psi)(x, \rho ^{\delta}(x,r))I_{A_0}(y,r)\\
& \times\Big(\int_{\mathbb{T}^{d}}|\nabla_z\sqrt{ \rho (z,r)}|^2
  \kappa^{\delta}(z-x)I_{A_1}(z,r)dz\Big)^{\frac{1}{2}}\Big)dydxdr.
\end{align*}
Since  $V\ast  \rho \in L^2([0,T]\times \mathbb{T}^{d})$, therefore,  $K^{\delta}_2$ can be bounded as follows,
\begin{align*}
K^{\delta}_2\leq &
 C(M)\Big(\int^t_0\int_{(\mathbb{T}^{d})^2}|V\ast \rho (y,r)|^2\kappa^{\delta}(y-x) (\partial_{\zeta}\psi)(x, \rho ^{\delta}(x,r))I_{A_0}(y,r)dydxdr\Big)^{\frac{1}{2}}\\
& \times\Big(\int^t_0\int_{(\mathbb{T}^{d})^3}|\nabla_z\sqrt{ \rho (z,r)}|^2
(\partial_{\zeta}\psi)(x, \rho ^{\delta}(x,r))
  \kappa^{\delta}(z-x)\kappa^{\delta}(y-x)I_{A_0}(y,r) I_{A_1}(z,r) dydx dz dr\Big)^{\frac{1}{2}}\\
  \leq & C(M)\|V\ast \rho \|^2_{L^2([0,T]\times\mathbb{T}^d)}\Big(\int^t_0\int_{(\mathbb{T}^{d})^2}|\nabla_z\sqrt{ \rho (z,r)}|^2
(\partial_{\zeta}\psi)(x, \rho ^{\delta}(x,r))
  \kappa^{\delta}(z-x)I_{A_1}(z,r) dx dz dr\Big)^{\frac{1}{2}}.
\end{align*}
Moreover, since $|\nabla\sqrt{ \rho (z,r)}|^2I_{A_1}(z,r)$ is integrable, we obtain
\begin{align*}
  \int_{\mathbb{T}^{d}}|\nabla\sqrt{ \rho (z,\cdot)}|^2
  \kappa^{\delta}(z-\cdot)I_{A_1}(z,\cdot) dz \rightarrow |\nabla\sqrt{ \rho (\cdot,\cdot)}|^2
  I_{A_1}(\cdot,\cdot)\ \  {\rm{strongly\ in\ }} L^1([0,T];L^1(\mathbb{T}^d)),
\end{align*}
as $\delta\rightarrow 0$. By the definition of $A_1$ and the fact that $\psi\in C^{\infty}_c(\mathbb{T}^d\times \mathbb{R})$, we obtain
\begin{align*}\notag
 & \lim_{\delta\rightarrow 0}\Big(\int^t_0\int_{(\mathbb{T}^{d})^2}|\nabla_z\sqrt{ \rho (z,r)}|^2
(\partial_{\zeta}\psi)(x, \rho ^{\delta}(x,r))
  \kappa^{\delta}(z-x)I_{A_1}(z,r) dx dz dr dr\Big)\\
  \label{eqq-30}
  =& \int^t_0\int_{\mathbb{T}^{d}}|\nabla_z\sqrt{ \rho (z,r)}|^2
(\partial_{\zeta}\psi)(z, \rho (z,r))
  I_{A_1}(z,r)  dz dr=0.
\end{align*}
It implies that
\begin{equation}\label{wr-5}
  \lim_{\delta\rightarrow 0}K^{\delta}_2=0.
\end{equation}

Concerning the term $K^{\delta}_1$,
by the definition of $A_0$, we obtain that $2\sqrt{ \rho (z,r)}\nabla\sqrt{ \rho (z,r)}
  I_{A_0}(z,r)$ is integrable. It then follows from the fact that $\psi\in C^{\infty}_c(\mathbb{T}^d\times \mathbb{R})$ and the  dominated convergence theorem that
\begin{align}\notag
  &\lim_{\delta\rightarrow 0}\partial_{\zeta}\psi(\cdot,\rho^{\delta}(\cdot,\cdot))
  \Big(\int_{\mathbb{T}^{d}}2\sqrt{ \rho (z,\cdot)}\nabla\sqrt{ \rho (z,\cdot)}
  \kappa^{\delta}(z-\cdot)I_{A_0}(z,\cdot)dz\Big)\\
  \label{eqq-24}
  = & \partial_{\zeta}\psi(\cdot, \rho (\cdot,\cdot))
 2\sqrt{ \rho (\cdot,\cdot)}\nabla\sqrt{ \rho (\cdot,\cdot)}\ \  {\rm{strongly\ in \ }} L^2([0,T];L^2(\mathbb{T}^d)).
\end{align}
Moreover,  by the definition of $A_0$ and the fact that $V\ast  \rho \in L^2([0,T];L^2(\mathbb{T}^d))$,
\begin{align}\notag
&\lim_{\delta\rightarrow 0}\int_{\mathbb{T}^{d}} \rho (y,\cdot)V\ast \rho (y,\cdot)I_{A_0}(y,\cdot)\kappa^{\delta}(y-\cdot) dy\\
\label{eqq-25}
&= \rho (\cdot,\cdot)V\ast  \rho (\cdot,\cdot)I_{A_0}(\cdot,\cdot)\quad {\rm{strongly\ in \ }} L^2([0,T];L^2(\mathbb{T}^d)).
\end{align}
Combining (\ref{eqq-24}) and (\ref{eqq-25}), by H\"{o}lder inequality and dominated convergence theorem, it gives that
\begin{align*}
  \lim_{\delta\rightarrow 0}K^{\delta}_1
  =2\int^t_0\int_{\mathbb{T}^{d}} \rho (x,r)V\ast \rho (x,r)\sqrt{ \rho (x,r)}(\partial_{\zeta}\psi)(x, \rho (x,r))\cdot
\nabla \sqrt{ \rho (x,r)}dxdr,
\end{align*}
this, together with \eqref{E OF K W q5} and \eqref{wr-5}, yields (\ref{eqq-27}).

Based on  (\ref{eqq-26}) and  (\ref{eqq-27}), we get
\begin{align}\label{I3}
\lim_{\delta\rightarrow 0}I^{\delta}_{31}
= &\int^t_0\int_{\mathbb{T}^{d}} \rho (x,r)V\ast \rho (x,r)\cdot (\nabla \psi)(x, \rho (x,r))dxdr\nonumber\\
&+ 2\int^t_0\int_{\mathbb{T}^{d}} \rho (x,r)V\ast \rho (x,r)\sqrt{ \rho (x,r)}(\partial_{\zeta}\psi)(x, \rho (x,r))\cdot
\nabla \sqrt{ \rho (x,r)}dxdr.
\end{align}
Therefore, (\ref{eqq-28}) follows from  (\ref{I3}) and (\ref{eqq-38}).

In combination with \eqref{I1 2} and \eqref{eqq-28}, it follows that
\begin{align*}
&\left.\int_{\mathbb{T}^{d}} \Psi(x,  \rho (x, r)) \mathrm{d} x\right|_{r=0} ^{t}=\left.\lim _{\delta \rightarrow 0} \int_{\mathbb{T}^{d}} \Psi\left(x, \rho ^{\delta}(x, r)\right) \mathrm{d} x\right|_{r=0} ^{t} \\
=&\int_0^t\int_{\mathbb{R}}\int_{\mathbb{T}^d}\chi\Delta_x\psi dxd\xi dr-\int_0^t\int_{\mathbb{R}}\int_{\mathbb{T}^d}q\partial_{\xi}\psi dxd\xi dr-\int_{0}^{t} \int_{\mathbb{T}^{d}}\nabla\cdot( \rho  V\ast \rho ) \psi(x,  \rho (x, r)) \mathrm{d} x \mathrm{~d} r \\
& +\int_{0}^{t} \int_{\mathbb{T}^{d}}\sqrt{\rho}(x,r) g (x, r)  \cdot\left(\nabla_{x} \psi\right)(x, \rho (x, r)) \mathrm{d} x \mathrm{~d}r \\
&+\int_{0}^{t} \int_{\mathbb{T}^{d}} \sqrt{\rho(x,r)}g (x, r) \left(\partial_{\zeta} \psi\right)(x,  \rho (x,r)) \cdot \nabla \rho (x, r) \mathrm{d} x \mathrm{~d} r.
\end{align*}
Thus, $\rho$ is the kinetic solution of (\ref{skeleton}).
\end{proof}

\begin{remark}
Under Assumption (A1), by regularities of the solution $\rho$, we  readily check that both (\ref{cond-1}) and (\ref{cond-2}) hold.	
\end{remark}

\subsection{Existence of weak solutions to the skeleton equation}


We need the following result whose proof can be found in \cite[Lemma 17]{FG23}.
\begin{lemma}\label{lem-3}
 Assume that the sequence $\{\rho_n\}_{n\in \mathbb{N}}\subseteq L^{\infty}([0,T];L^1(\mathbb{T}^d))$ satisfy that for some $c\in (0,\infty)$ and $s\geq\frac{d}{2}+1$ independent of $n\in \mathbb{N}$,
\begin{align*}
 \|\rho_n\|_{L^{\infty}([0,T];L^1(\mathbb{T}^d))}+\|\sqrt{\rho_n}\|_{L^2([0,T];H^1(\mathbb{T}^d))}
+\|\partial_t \rho_n\|_{L^1([0,T];H^{-s}(\mathbb{T}^d))}\leq c.
\end{align*}
Then,
\begin{align*}
 \{\rho_n\}_{n\in \mathbb{N}}\ {\rm{is\ relatively\ precompact\ in\ }} L^{1}([0,T];L^1(\mathbb{T}^d)),
\end{align*}
 and
 \begin{align*}
 \{\sqrt{\rho_n}\}_{n\in \mathbb{N}}\ {\rm{is\ relatively\ precompact\ in\ }} L^{2}([0,T];L^2(\mathbb{T}^d)).
\end{align*}
\end{lemma}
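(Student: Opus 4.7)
The strategy is to apply an Aubin--Lions--Simon type compactness criterion to $\rho_n$ itself. The only nontrivial step is to upgrade the $L^2([0,T];H^1(\mathbb{T}^d))$ control on $\sqrt{\rho_n}$ into a genuine spatial Sobolev bound on $\rho_n$; once that is done, relative compactness of $\{\rho_n\}$ in $L^1([0,T];L^1(\mathbb{T}^d))$ follows from a standard interpolation with the $H^{-s}$ bound on $\partial_t\rho_n$, and the statement for $\sqrt{\rho_n}$ is then deduced by combining pointwise a.e.\ convergence with uniform integrability coming from Sobolev embedding.

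Concretely, in the first step I would invoke Lemma \ref{lem-r-1}(i) to write $\nabla \rho_n=2\sqrt{\rho_n}\,\nabla\sqrt{\rho_n}$ almost everywhere, and combine the Sobolev embedding $H^1(\mathbb{T}^d)\hookrightarrow L^{2d/(d-2)}(\mathbb{T}^d)$ (for $d\ge 3$; for $d=2$ use $H^1\hookrightarrow L^p$ for every $p<\infty$) with H\"older's inequality in $x$ and the Cauchy--Schwarz inequality in $t$ to obtain the uniform estimate
\begin{align*}
\|\nabla\rho_n\|_{L^1([0,T];L^{d/(d-1)}(\mathbb{T}^d))}\le 2\,\|\sqrt{\rho_n}\|_{L^2([0,T];L^{2d/(d-2)}(\mathbb{T}^d))}\,\|\nabla\sqrt{\rho_n}\|_{L^2([0,T];L^2(\mathbb{T}^d))}\le C.
\end{align*}
Together with the $L^\infty([0,T];L^1(\mathbb{T}^d))$ bound on $\rho_n$, this shows that $\{\rho_n\}$ is bounded in $L^1([0,T];W^{1,r}(\mathbb{T}^d))$ for some $r>1$ (and for any $r<2$ when $d=2$).

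In the second step I would apply Simon's compactness theorem to the triple $W^{1,r}(\mathbb{T}^d)\subset L^1(\mathbb{T}^d)\subset H^{-s}(\mathbb{T}^d)$: the first inclusion is compact by Rellich--Kondrachov on the torus, and the second is continuous because $H^s\hookrightarrow L^\infty$ for $s>d/2$, so $L^1$ pairs continuously against $H^s$. Combined with the hypothesis $\|\partial_t\rho_n\|_{L^1([0,T];H^{-s})}\le c$, this yields relative compactness of $\{\rho_n\}$ in $L^1([0,T];L^1(\mathbb{T}^d))$. Passing to a subsequence, $\rho_n\to\rho$ almost everywhere on $\mathbb{T}^d\times[0,T]$, hence $\sqrt{\rho_n}\to\sqrt{\rho}$ a.e.\ as well; the bound on $\sqrt{\rho_n}$ in $L^2([0,T];L^{2d/(d-2)}(\mathbb{T}^d))$ provides uniform integrability of $|\sqrt{\rho_n}|^2$ in $L^1([0,T]\times\mathbb{T}^d)$, and Vitali's convergence theorem then upgrades the pointwise convergence to $L^2([0,T];L^2(\mathbb{T}^d))$.

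The main obstacle is the first step, since the hypothesis controls $\sqrt{\rho_n}$ rather than $\rho_n$ directly, and one must invoke Lemma \ref{lem-r-1}(i) to justify the chain rule on the zero set $\{\rho_n=0\}$. The threshold $s\ge d/2+1$ is slightly stronger than the bare $s>d/2$ needed for $L^1\hookrightarrow H^{-s}$; the extra derivative provides headroom that is convenient when regularizing in time inside the Simon criterion, and also allows the argument to proceed uniformly for all $d\ge 2$, covering the borderline case $d=2$.
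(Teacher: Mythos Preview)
The paper does not actually prove this lemma; it simply cites Lemma~5.3 of \cite{FG}. Your strategy---upgrading the $H^1$ control on $\sqrt{\rho_n}$ to a $W^{1,r}$ bound on $\rho_n$ via Lemma~\ref{lem-r-1}(i), then applying the Aubin--Lions--Simon criterion with the triple $W^{1,r}\subset\subset L^1\subset H^{-s}$---is the standard one and is essentially how the result is argued in \cite{FG} (there with the simpler choice $r=1$, using $\|\nabla\rho_n\|_{L^1}\le 2\|\sqrt{\rho_n}\|_{L^2}\|\nabla\sqrt{\rho_n}\|_{L^2}$ in place of your Sobolev--H\"older step; both work). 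Your first conclusion, relative compactness of $\{\rho_n\}$ in $L^1_{t,x}$, is correctly justified.

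There is, however, a genuine gap in your argument for the second conclusion. The bound on $\sqrt{\rho_n}$ in $L^2([0,T];L^{2d/(d-2)}(\mathbb{T}^d))$ alone does \emph{not} yield uniform integrability of $\rho_n=|\sqrt{\rho_n}|^2$ on $[0,T]\times\mathbb{T}^d$: it only gives $\rho_n$ bounded in $L^1_t L^{d/(d-2)}_x$, and an $L^1$-in-time bound cannot prevent mass concentration on thin time slabs (take $\rho_n(t,x)=n\,\mathbf{1}_{[0,1/n]}(t)f(x)$ for any fixed $f\in L^{d/(d-2)}$). To repair this you must also invoke the $L^\infty_tL^1_x$ bound and interpolate to $L^{(d+2)/d}_{t,x}$, exactly as in the paper's estimate \eqref{eqq-2}, which then gives the uniform integrability needed for Vitali. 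Alternatively---and much more simply---bypass Vitali entirely via the elementary inequality $|\sqrt{a}-\sqrt{b}|^2\le|a-b|$, which immediately yields $\|\sqrt{\rho_n}-\sqrt{\rho}\|_{L^2_{t,x}}^2\le\|\rho_n-\rho\|_{L^1_{t,x}}$ and deduces the second conclusion directly from the first.
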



We need \cite[Proposition 20]{FG23} as well, which reads as follows.
\begin{lemma}\label{lem-2}
For any $\eta\in(0,1)$, there exist nondecreasing functions $\sigma^{\frac{1}{2},\eta}: [0,\infty)\rightarrow [0,\infty)$ satisfying that $\{\sigma^{\frac{1}{2},\eta}(\cdot)\}_{\eta>0}\subseteq C^{\infty}([0,\infty))$, $\sigma^{\frac{1}{2},\eta}(0)=0$ and the following properties.
\begin{description}
  \item[(i)]
  There exists $c_1\in(0,\infty)$ such that for every $\eta\in(0,1)$ and $\zeta\in[0,\infty)$,
\begin{equation}\label{t-14}
0\leq \sigma^{\frac{1}{2},\eta}(\zeta)\leq c_1\sqrt{\zeta},\ and\ 0\leq(\sigma^{\frac{1}{2},\eta})'(\zeta)\leq \frac{c_1 }{\sqrt{\zeta}}.
\end{equation}
  \item[(ii)]
  For every $\eta\in(0,1)$, there exists $c\in(0,\infty)$ depending on $\eta$ such that
\begin{equation}\label{t-15}
\|\sigma^{\frac{1}{2},\eta}\|_{L^{\infty}([0,\infty))}+\|(\sigma^{\frac{1}{2},\eta})'\|_{L^{\infty}([0,\infty))}\leq c.
\end{equation}
\item[(iii)]
 For every compact set $A\subseteq[0,\infty)$,
\begin{equation}\label{s-38}
\lim_{\eta\rightarrow0}\|\sigma^{\frac{1}{2},\eta}(\zeta)-\sqrt{\zeta}\|_{L^{\infty}(A)}=0.
\end{equation}

\end{description}
\end{lemma}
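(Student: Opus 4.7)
The plan is to construct $\sigma^{\frac{1}{2},\eta}$ explicitly as a smoothed, truncated version of $\sqrt{\zeta}$, essentially reproducing Lemma 5.4 of \cite{FG}. Fix once and for all a smooth nonincreasing cutoff $\chi \in C^\infty([0,\infty);[0,1])$ with $\chi \equiv 1$ on $[0,1]$ and $\chi \equiv 0$ on $[2,\infty)$. For each $\eta \in (0,1)$ set $R_\eta := 1/\eta$ and define
\begin{equation*}
\sigma^{\frac{1}{2},\eta}(\zeta) := \int_0^\zeta \frac{\chi(s/R_\eta)}{2\sqrt{s + \eta^2}}\,ds.
\end{equation*}
For every fixed $\eta > 0$ the integrand is smooth and nonnegative in $s$ on $[0,\infty)$ (the factor $s \mapsto \sqrt{s+\eta^2}$ is $C^\infty$ at $s=0$), hence $\sigma^{\frac{1}{2},\eta}\in C^\infty([0,\infty))$, is nondecreasing, and vanishes at $\zeta=0$, with derivative $(\sigma^{\frac{1}{2},\eta})'(\zeta) = \chi(\zeta/R_\eta)/(2\sqrt{\zeta+\eta^2})$.

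To check (i), I would note that $\chi \leq 1$ and $\sqrt{\zeta+\eta^2} \geq \sqrt{\zeta}$ immediately give $(\sigma^{\frac{1}{2},\eta})'(\zeta) \leq 1/(2\sqrt{\zeta})$; integrating yields $\sigma^{\frac{1}{2},\eta}(\zeta) \leq \sqrt{\zeta}$. Both bounds are $\eta$-free, so (\ref{t-14}) holds with $c_1 = 1$. For (ii), the bound $\sqrt{\zeta+\eta^2} \geq \eta$ gives $\|(\sigma^{\frac{1}{2},\eta})'\|_{L^\infty} \leq 1/(2\eta)$, while the support of $\chi(\cdot/R_\eta)$ is contained in $[0, 2R_\eta]$, so $\sigma^{\frac{1}{2},\eta}$ is constant for $\zeta \geq 2R_\eta$ and is bounded by $\sqrt{2R_\eta} = \sqrt{2/\eta}$ via (i); both bounds are finite for each fixed $\eta$, confirming (\ref{t-15}).

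For (iii), fix a compact $A \subseteq [0,\infty)$ and pick $\eta$ small enough that $\sup A \leq R_\eta$; the cutoff then equals one on the entire range of integration, and
\begin{equation*}
\big| \sigma^{\frac{1}{2},\eta}(\zeta) - \sqrt{\zeta} \big| = \Big| \int_0^\zeta \Big( \tfrac{1}{2\sqrt{s+\eta^2}} - \tfrac{1}{2\sqrt{s}} \Big) ds \Big|.
\end{equation*}
The integrand converges to $0$ pointwise as $\eta \downarrow 0$ and is dominated in absolute value by $1/(2\sqrt{s})$, which is integrable on $[0,\sup A]$; dominated convergence provides the uniform convergence in (\ref{s-38}) on $A$.

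The construction is elementary, so there is no genuine obstacle; the only subtlety worth flagging is the use of two independent scales, $\eta^2$ inside the square root and $R_\eta = 1/\eta$ for the outer cutoff. This decoupling is what allows $c_1$ in (i) to be chosen \emph{independent} of $\eta$ (controlled by the $s^{-1/2}$ singularity alone), while the $\eta$-dependent bound in (ii) remains finite. A naive single-scale regularization would force one of (i) or (ii) to fail.
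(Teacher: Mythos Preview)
Your construction is correct and complete; the only step that could use one extra word is (iii), where dominated convergence gives $\int_0^{\sup A}\big|\tfrac{1}{2\sqrt{s+\eta^2}}-\tfrac{1}{2\sqrt{s}}\big|\,ds\to 0$, and uniformity over $\zeta\in A$ follows because $\big|\sigma^{\frac{1}{2},\eta}(\zeta)-\sqrt{\zeta}\big|$ is bounded by this single integral for every $\zeta\le\sup A$. The paper itself does not prove this lemma at all --- it simply quotes it as Lemma~5.4 of \cite{FG} --- so your explicit construction is exactly what the citation points to, and there is nothing further to compare.
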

Assume that the kernel $V$ satisfies Assumption (A1) and let $\kappa_{\gamma}$ be a standard convolution kernel. Set
$V^{\gamma}=((V\wedge(1/\gamma))\vee(-1/\gamma))\ast\kappa_{\gamma}$. Then, $V^{\gamma}$ satisfies that
\begin{align}\label{kk-65}
\|V^{\gamma}\|_{L^{p^*}([0,T];L^{p}(\mathbb{T}^d))}\leq&\|V\|_{L^{p^*}([0,T];L^{p}(\mathbb{T}^d))}, \text{ and }\|V^{\gamma}\|_{L^{\infty}([0,T];L^{\infty}(\mathbb{T}^d))}\le\frac{1}{\gamma}.
\end{align}
Furthermore,
\begin{align}\label{r-18}
\|V-V^{\gamma}\|_{ L^{p^{*}}([0,T];L^p(\mathbb{T}^d))}\rightarrow 0,\ \ \text{as }\gamma\rightarrow0.
\end{align}
Let $\sigma^{\eta}:=(\sigma^{\frac{1}{2},\eta})^2$,
 we introduce a regularized approximating equation of (\ref{skeleton}). For every $\eta\in(0,1)$ and $\gamma\in(0,1)$, consider
 \begin{align}\label{qq-25}
 \left\{
   \begin{array}{ll}
     \partial_t\rho^{\eta,\gamma}=\Delta\rho^{\eta,\gamma}-\nabla\cdot (\sigma^{\frac{1}{2},\eta}(\rho^{\eta,\gamma})g)-\nabla\cdot (\sigma^{\eta}(\rho^{\eta,\gamma}) V^{\gamma}\ast\rho^{\eta,\gamma}), & \\
    \rho^{\eta,\gamma}(0)=\rho_0. &
   \end{array}
 \right.
 \end{align}

\begin{definition}\label{dfn-4}
Let $\eta, \gamma\in(0,1)$, $g\in L^2(\mathbb{T}^d\times[0,T];\mathbb{R}^d)$ and nonnegative $\rho_0\in L^{\infty}(\mathbb{T}^d)$. A nonnegative function $\rho\in L^{\infty}([0,T];L^1(\mathbb{T}^d))$ is called a weak solution of (\ref{qq-25}) with initial data $\rho_0$ if $\rho $
 is in $L^2([0,T];H^1(\mathbb{T}^d))$, and for every $\psi\in C^{\infty}(\mathbb{T}^d)$ and almost every $t\in[0,T]$,
	\begin{align}\notag
	\int_{\mathbb{T}^d}\rho(x,t)\psi(x)dx=&\int_{\mathbb{T}^d}\rho_0\psi dx
-\int_0^t\int_{\mathbb{T}^d}\nabla\rho\cdot\nabla\psi dxds
+\int_0^t\int_{\mathbb{T}^d}\sigma^{\eta}(\rho) V^{\gamma}\ast\rho\cdot\nabla\psi dxds\\
\notag
	&+\int_0^t\int_{\mathbb{T}^d}\sigma^{\frac{1}{2},\eta}(\rho)g\cdot\nabla\psi dxds.
	\end{align}
\end{definition}

\begin{proposition}\label{existence-2}
Let $V$ satisfy Assumption (A1). Let $\eta,\gamma\in(0,1)$, let $\rho_0\in L^{\infty}(\mathbb{T}^d)$ be nonnegative and $g\in L^2(\mathbb{T}^d\times[0,T];\mathbb{R}^d)$. Then (\ref{qq-25}) admits a nonnegative weak solution $\rho^{\eta,\gamma}\in L^2([0,T];H^1(\mathbb{T}^d))$ with initial data $\rho_0$ in the sense of Definition \ref{dfn-4}. Furthermore, almost surely for almost every $t\in [0,T]$,
\begin{align*}
  \|\rho^{\eta,\gamma}(t)\|_{L^1(\mathbb{T}^d)}=\|\rho_0\|_{L^1(\mathbb{T}^d)}.
\end{align*}
\end{proposition}
\begin{proof}
For any $\eta,\gamma\in (0,1)$, let
\begin{equation*}
S:  L^2([0,T];L^2(\mathbb{T}^d))\rightarrow L^2([0,T];H^1(\mathbb{T}^d))\subset L^2([0,T];L^2(\mathbb{T}^d)),
\end{equation*}
be a map defined by the unique weak solution of the equation
\begin{equation}\label{qq-9}
\left\{
  \begin{array}{ll}
   \partial_tS(v)=\Delta S(v)-\nabla\cdot (\sigma^{\frac{1}{2},\eta}(v)g)-\nabla\cdot (\sigma^{\eta}(v)V^{\gamma}\ast S(v)) ,& \\
   S(v)(\cdot,0)=\rho_0(\cdot), &
  \end{array}
\right.
\end{equation}
%
for every $v\in  L^2([0,T];L^2(\mathbb{T}^d))$. Due to (\ref{kk-65}), by integration by parts, the Young and convolution Young inequalities, we deduce that
\begin{align}\label{qq-6}
\Big|\int^T_0\langle S(v), -\nabla\cdot  (\sigma^{\eta}(v)V^{\gamma}\ast S(v))\rangle ds\Big|\leq \frac{1}{4}\int^T_0\|\nabla S(v)\|^2_{L^2(\mathbb{T}^d)}ds+ c(\eta,
  \gamma)\int^T_0\|S(v)\|^2_{L^2(\mathbb{T}^d)} ds.
  \end{align}
Taking $L^2(\mathbb{T}^d)$-inner product of (\ref{qq-9}), by using integration by parts formula, (\ref{qq-6}) and Gronwall inequality, we get
\begin{align}\label{qq-16}
\|S(v)\|_{L^{\infty}([0,T];L^2(\mathbb{T}^d))}^2+\|\nabla S(v)\|_{L^2([0,T];L^2(\mathbb{T}^d))}^2\leq c(\eta,\gamma,\rho_0)(1+\|g\|_{L^2(\mathbb{T}^d\times[0,T])}^2).
\end{align}
Moreover, from (\ref{qq-9}) and (\ref{qq-16}), it is readily to deduce that
\begin{align}\label{eqq-1}
\|\partial_tS(v)\|^2_{L^2([0,T];H^{-1}(\mathbb{T}^d))}
\leq c(\eta,\gamma,\rho_0)(1+\|g\|_{L^2(\mathbb{T}^d\times[0,T])}^2).
\end{align}
The Aubin-Lions-Simons lemma (see \cite[Corollary 5]{Simon}) implies that the image of the mapping $S$ lies in a compact subset of $ L^{2}([0,T];L^2(\mathbb{T}^d))$. The Schauder fixed point theorem yields that $S$ has a fixed point, which proves the existence of a weak solution to (\ref{qq-25}).

Let $\rho^{\eta,\gamma}\in L^2([0,T];H^1(\mathbb{T}^d))$ be a weak solution of (\ref{qq-25}). We follow the same procedure for the estimates of the map $S$, taking $L^2(\mathbb{T}^d)$-inner product for (\ref{qq-25}) to see that
\begin{align*}
\|\rho^{\eta,\gamma}\|_{L^{\infty}([0,T];L^2(\mathbb{T}^d))}^2+\|\nabla \rho^{\eta,\gamma}\|_{L^2([0,T];L^2(\mathbb{T}^d;\mathbb{R}^d))}^2\leq c(\eta,\gamma,\rho_0)\Big(1+\|g\|^2_{L^2(\mathbb{T}^d\times[0,T])}+\int^T_0\| \rho^{\eta,\gamma}\|^2_{L^2(\mathbb{T}^d)}ds\Big).
\end{align*}
Applying Gronwall inequality to the above equation, it gives
\begin{align}\label{qq-r-2}
\|\rho^{\eta,\gamma}\|_{L^{\infty}([0,T];L^2(\mathbb{T}^d))}^2+\|\nabla \rho^{\eta,\gamma}\|_{L^2([0,T];L^2(\mathbb{T}^d;\mathbb{R}^d))}^2\leq C(\eta,\gamma,\rho_0,\|g\|^2_{L^2(\mathbb{T}^d\times[0,T])},T).
\end{align}
In the following, we will show that $\rho^{\eta,\gamma}$ is nonnegative. Let $(\rho^{\eta,\gamma})^{-}=\min (\rho^{\eta,\gamma},0)$. Applying the chain rule to a regularization of the function $a(\xi)=|\xi|$ and by similar method as \cite[Theorem 5.7]{WWZ22}, it implies that $(\rho^{\eta,\gamma})^{-}=0$ for almost every $(t,x)\in[0,T]\times\mathbb{T}^d$. Thus, $\rho^{\eta,\gamma}$ is nonnegative.

Testing (\ref{qq-25}) with a constant function $1$, thanks to the fact that $\rho^{\eta,\gamma}$ is nonnegative, it gives that  for almost every $t\in [0,T]$,
\begin{align}\label{L1}
  \|\rho^{\eta,\gamma}(t)\|_{L^1(\mathbb{T}^d)}=\|\rho_0\|_{L^1(\mathbb{T}^d)}.
\end{align}
This completes the proof.
\end{proof}
In the sequel, we will show the well-posedness of the following equation
\begin{align}\label{rrr-3}
 \left\{
   \begin{array}{ll}
     \partial_t\rho^{\eta}=\Delta\rho^{\eta}-\nabla\cdot (\sigma^{\frac{1}{2},\eta}(\rho^{\eta})g)-\nabla\cdot (\sigma^{\eta}(\rho^{\eta}) V\ast\rho^{\eta}), & \\
    \rho^{\eta}(0)=\rho_0. &
   \end{array}
 \right.
 \end{align}
\begin{proposition}\label{existence-rhoeta}
Let $V$ satisfy Assumption (A1). Let $\rho_0\in L^{\infty}(\mathbb{T}^d)$ be nonnegative and $g\in L^2(\mathbb{T}^d\times[0,T];\mathbb{R}^d)$. Then for every $\eta\in (0,1)$, there exists a nonnegative weak solution $\rho^{\eta}\in L^2([0,T];H^1(\mathbb{T}^d))$ of (\ref{rrr-3}). Furthermore, for every $t\in[0,T]$,
\begin{align}\label{eq-12}
&\|\rho^{\eta}(\cdot,t)\|_{L^1(\mathbb{T}^d)}=\|\rho_0\|_{L^1(\mathbb{T}^d)},
\end{align}
and
\begin{align}\notag
&\|\rho^{\eta}\|_{L^{\infty}([0,T];L^2(\mathbb{T}^d))}^2+\|\nabla \rho^{\eta}\|_{L^2([0,T];L^2(\mathbb{T}^d;\mathbb{R}^d))}^2\\
\label{rrr-1}
\leq&  \|\rho_0\|_{L^2(\mathbb{T}^d)}^2+c(\eta)
\|g\|^2_{L^2(\mathbb{T}^d\times[0,T])}+c(\eta,\|V\|_{L^{p^*}([0,T];L^{p}(\mathbb{T}^d))}).
\end{align}
\end{proposition}
\begin{proof}
Note that the $L^2(\mathbb{T}^d)$-norm estimate (\ref{qq-r-2}) depends on $\gamma$. However,
with the aid of (\ref{L1}), (\ref{qq-r-2}) can be improved to be uniform with respect to $\gamma$. Indeed, by H\"{o}lder inequality, convolution Young inequality, (\ref{t-15}), (\ref{kk-65}), (\ref{L1}) and Young inequality, the kernel term can be estimated as
\begin{align}\notag
  \Big|\int^T_0\int_{\mathbb{T}^d}\sigma^{\eta}(\rho^{\eta,\gamma})V^{\gamma}\ast\rho^{\eta,\gamma}\cdot \nabla \rho^{\eta,\gamma}dx ds\Big|\leq& c(\eta)\|\rho_0\|_{L^1(\mathbb{T}^d)}\Big(\int^T_0\|\nabla \rho^{\eta,\gamma}\|^2_{L^2(\mathbb{T}^d)}ds\Big)^{\frac{1}{2}}\Big(\int^T_0\|V^{\gamma}\|^2_{L^2(\mathbb{T}^d)}ds\Big)^{\frac{1}{2}}\\
  \label{s-41-1}
  \leq&  \frac{1}{4}\int^T_0\|\nabla \rho^{\eta,\gamma}\|^{2}_{L^2(\mathbb{T}^d)}ds
  +c(\eta,\|V\|_{L^{p^*}([0,T];L^{p}(\mathbb{T}^d))}).
\end{align}
Consequently, this implies that there exists a constant $c$ independent of $\gamma$ such that
\begin{align}\notag
&\|\rho^{\eta,\gamma}\|_{L^{\infty}([0,T];L^2(\mathbb{T}^d))}^2+\|\nabla \rho^{\eta,\gamma}\|_{L^2([0,T];L^2(\mathbb{T}^d;\mathbb{R}^d))}^2\\
\label{rrr-4}
\leq&\|\rho_0\|_{L^2(\mathbb{T}^d)}^2+c(\eta)
\|g\|^2_{L^2(\mathbb{T}^d\times[0,T])}+c(\eta,\|V\|_{L^{p^*}([0,T];L^{p}(\mathbb{T}^d))}).
\end{align}

For every $\psi\in H^s(\mathbb{T}^d)$ with $s>\frac{d}{2}+1$, by using Sobolev embedding theorem, it follows that there exists $c>0$ such that $\|\psi\|_{L^{\infty}(\mathbb{T}^d)}+\|\nabla \psi\|_{L^{\infty}(\mathbb{T}^d;\mathbb{R}^d)}\leq c\|\psi\|_{H^s(\mathbb{T}^d)}$. A direct computation shows that
\begin{align*}\notag
  \|\partial_t \rho^{\eta,\gamma}\|_{L^2([0,T];H^{-s}(\mathbb{T}^d))}
\leq&  \|\nabla\rho^{\eta,\gamma}\|_{L^2([0,T];L^1(\mathbb{T}^d))}
+\|\sigma^{\eta}(\rho^{\eta,\gamma})V^{\gamma}\ast\rho^{\eta,\gamma}\|_{L^2([0,T];L^1(\mathbb{T}^d))}\\
&+\|g\sigma^{\frac{1}{2},\eta}(\rho^{\eta,\gamma})\|_{L^2([0,T];L^1(\mathbb{T}^d))}.
\end{align*}
Regarding the kernel term, with the aid of (\ref{t-15}), (\ref{kk-65}) and (\ref{L1}), it gives
\begin{align*}
\|\sigma^{\eta}(\rho^{\eta,\gamma})V^{\gamma}\ast\rho^{\eta,\gamma}\|^2_{L^2([0,T];L^1(\mathbb{T}^d))}\leq c(\eta,T)\|\rho_0\|^2_{L^1(\mathbb{T}^d)} \|V\|^2_{L^{p^*}([0,T];L^p(\mathbb{T}^d))}.
\end{align*}
For the control term, it follows from (\ref{t-15}) that
\begin{align*}
\|g\sigma^{\frac{1}{2},\eta}(\rho^{\eta,\gamma})\|^2_{L^2([0,T];L^1(\mathbb{T}^d))}
\leq \int^T_0\|\sigma^{\frac{1}{2},\eta}\|^2_{L^{\infty}(\mathbb{R})}\|g\|^2_{L^1(\mathbb{T}^d)}dt
\leq c(\eta)\|g\|^2_{L^2(\mathbb{T}^d\times[0,T];\mathbb{R}^d)}.
\end{align*}
Based on all the above estimates, there exists $c>0$ independent of $\gamma$ such that
\begin{align}\notag
  \|\partial_t \rho^{\eta,\gamma}\|_{L^2([0,T];H^{-s}(\mathbb{T}^d))}
\leq& \Big[\|\rho_0\|_{L^2(\mathbb{T}^d)}^2+c(\eta)
\|g\|^2_{L^2(\mathbb{T}^d\times[0,T])}+c(\eta,\|V\|_{L^{p^*}([0,T];L^{p}(\mathbb{T}^d))})\Big]^{\frac{1}{2}}\\
\label{eq-16-1}
&+ c(\eta,T)\|\rho_0\|_{L^1(\mathbb{T}^d)} \|V\|_{L^{p^*}([0,T];L^p(\mathbb{T}^d))}+c(\eta)\|g\|_{L^2(\mathbb{T}^d\times[0,T];\mathbb{R}^d)}.
\end{align}
Based on (\ref{rrr-4}) and (\ref{eq-16-1}), it follows from the Aubin-Lions-Simon lemma that there exists $\rho^{\eta}\in L^2([0,T];H^1(\mathbb{T}^d))$ such that, after passing to a subsequence $\gamma^k\rightarrow 0$, as $k\rightarrow \infty$,
\begin{align}\label{qq-19-1}
&\rho^{\eta,\gamma^k}\rightarrow \rho^{\eta}\ {\rm{almost\ everywhere\ and\ strongly\ in\ }} L^2([0,T];L^2(\mathbb{T}^d)),\\
\label{qq-18-1}
& \rho^{\eta,\gamma^k}\rightarrow \rho^{\eta}\ {\rm{weakly\ in \ }} L^2([0,T];H^1(\mathbb{T}^d)).
 \end{align}
In the following, we will show that $\rho^{\eta}$ is a weak solution of (\ref{rrr-3}), which will be achieved by a procedure of passing to the limit $k\rightarrow \infty$. Thanks to \cite[Proposition 20]{FG23}, it suffices to handle the kernel term. For any $\eta\in (0,1)$, we claim that
 \begin{align}\label{kk-19}
\lim_{k\rightarrow \infty}\int^T_0\int_{\mathbb{T}^d}(\sigma^{\eta}(\rho^{\eta,\gamma^k})V^{\gamma^k}\ast \rho^{\eta,\gamma^k}-\sigma^{\eta}(\rho^{\eta})V\ast\rho^{\eta} )\cdot \nabla \psi dxdt= 0.
 \end{align}
 Note that
\begin{align*}
\Big|\int^T_0\int_{\mathbb{T}^d}(\sigma^{\eta}(\rho^{\eta,\gamma^k})V^{\gamma^k}\ast \rho^{\eta,\gamma^k}-\sigma^{\eta}(\rho^{\eta})V\ast\rho^{\eta} )\cdot \nabla \psi dxdt\Big|\leq I^k_1+I^k_2+I^k_3,
\end{align*}
where
\begin{align*}
I^k_1=& \Big|\int^T_0\int_{\mathbb{T}^d}\sigma^{\eta}(\rho^{\eta,\gamma^k})(V^{\gamma^k}-V)\ast \rho^{\eta,\gamma^k}\cdot \nabla \psi dxdt\Big|,\\
I^k_2=& \Big|\int^T_0\int_{\mathbb{T}^d}(\sigma^{\eta}(\rho^{\eta,\gamma^k})-\sigma^{\eta}(\rho^{\eta}))V\ast \rho^{\eta,\gamma^k}\cdot \nabla \psi dxdt\Big|,\\
I^k_3=&\Big |\int^T_0\int_{\mathbb{T}^d}\sigma^{\eta}(\rho^{\eta})V\ast (\rho^{\eta,\gamma^k}-\rho^{\eta})\cdot \nabla \psi dxdt\Big|.
 \end{align*}
With the aid of (\ref{t-15}), (\ref{r-18}) and (\ref{L1}), by convolution Young inequality, we get as $k\rightarrow \infty$,
\begin{align*}
  I^k_1\leq& C(\eta,\|\rho_0\|_{L^1(\mathbb{T}^d})\int^T_0\|V^{\gamma^k}-V\|_{L^1(\mathbb{T}^d)}dt\rightarrow 0.
\end{align*}
Since
\begin{align*}
 \Big|\int^T_0\int_{\mathbb{T}^d}(\sigma^{\eta}(\rho^{\eta,\gamma^k})-\sigma^{\eta}(\rho^{\eta}))V\ast \rho^{\eta,\gamma^k}\cdot \nabla \psi dxdt\Big|\leq C(\eta,\|\rho_0\|_{L^1(\mathbb{T}^d})\int^T_0\|V\|_{L^1(\mathbb{T}^d)}dt<\infty,
\end{align*}
by (\ref{qq-19-1}) and the dominated convergence theorem, it holds that $I^k_2\rightarrow 0$ as $k\rightarrow \infty$.
Regarding $I^k_3$, it follows that
\begin{align*}
  I^k_3\leq& C(\eta,\psi)\|V\|_{L^2([0,T];L^1(\mathbb{T}^d))}\|\rho^{\eta,\gamma^k}-\rho^{\eta}\|_{L^2([0,T];L^2(\mathbb{T}^d))}\rightarrow0,
\end{align*}
as $k\rightarrow\infty$. Then (\ref{kk-19}) is shown. Consequently, $\rho^{\eta}$ is a weak solution of (\ref{rrr-3}).
The preservation of the $L^1(\mathbb{T}^d)$-norm follows from (\ref{L1}), (\ref{qq-19-1}), and (\ref{rrr-1}) follows from (\ref{rrr-4}), (\ref{qq-18-1}) and the weak lower-semicontinuity of the Sobolev norm. This completes the proof.

\end{proof}

In the following, we aim to pass the limit $\eta\rightarrow 0$. Closely following \cite[Proposition 19]{FG23} and \cite[Proposition 5.4]{WWZ22}, we can make an entropy dissipation estimate for the weak solution of (\ref{rrr-3}). Define
\begin{equation*}
\Psi(\zeta)=\int_0^{\zeta}\log(\zeta')d\zeta'=\zeta\log \zeta-\zeta.
\end{equation*}
\begin{proposition}\label{prp-1}
Let $V$ satisfy Assumption (A1). Let $\rho_0\in L^{\infty}(\mathbb{T}^d)$ be nonnegative and $g\in L^2(\mathbb{T}^d\times[0,T];\mathbb{R}^d)$. Let $\rho^{\eta}$ be the weak solution of (\ref{rrr-3}) with initial data $\rho_0$, then  there exists a constant $C>0$ independent of $\eta$  such that for almost every $t\in[0,T]$,
\begin{align}\label{r-4}
\int_{\mathbb{T}^d}\Psi(\rho^{\eta}(x,t))dx
+\int_0^t\|\nabla \sqrt{\rho^{\eta}}\|^2_{L^2(\mathbb{T}^d)}ds\leq  \int_{\mathbb{T}^d}\Psi(\rho_0)dx+ c(\rho_0,V)(\|g\|_{L^2(\mathbb{T}^d\times[0,T];\mathbb{R}^d)}^2+1).
\end{align}
\end{proposition}


%
%

Based on the entropy dissipation estimate, we get the following result.
\begin{corollary}
Let $V$ satisfy Assumption (A1). Let $\rho_0\in L^{\infty}(\mathbb{T}^d)$ be nonnegative and $g\in L^2(\mathbb{T}^d\times[0,T];\mathbb{R}^d)$. Let $\rho^{\eta}$ be the weak solution of (\ref{rrr-3}) with initial data $\rho_0$. Then for any $r\in [1,1+\frac{2}{d}]$,
\begin{align}\label{eqq-2}
 \int^T_0\|\rho^{\eta}\|^{r}_{L^{r}(\mathbb{T}^d)}dt
 \leq & c(d,\|\rho_0\|_{L^{1}(\mathbb{T}^{d})} ) \Big[\int_{\mathbb{T}^d}\Psi(\rho_0)dx+ c(d,\rho_0,V)(\|g\|_{L^2(\mathbb{T}^d\times[0,T];\mathbb{R}^d)}^2+1)\Big].
\end{align}
\end{corollary}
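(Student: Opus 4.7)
The plan is to deduce the $L^{r}_{t,x}$ bound from the entropy dissipation estimate \eqref{r-4} together with the mass conservation \eqref{L1}, by interpolating with the Gagliardo--Nirenberg inequality of Lemma \ref{GN}. The key observation is to work with $\sqrt{\rho^{\eta,\gamma}}$ rather than $\rho^{\eta,\gamma}$ itself, since \eqref{r-4} controls $\nabla\sqrt{\rho^{\eta,\gamma}}$ in $L^2_{t,x}$ while \eqref{L1} controls $\sqrt{\rho^{\eta,\gamma}}$ in $L^{\infty}_t L^2_x$.

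First I would write, for the sake of brevity $\rho=\rho^{\eta,\gamma}$, the trivial identity $\|\rho(t)\|_{L^{r}(\mathbb{T}^d)}^{r}=\|\sqrt{\rho(t)}\|_{L^{2r}(\mathbb{T}^d)}^{2r}$. Then I would apply Lemma \ref{GN} to $\sqrt{\rho(t)}$ with $j=0$, $m=1$, $p=2r$, $q=r_{\rm GN}=2$, which forces the interpolation exponent
\begin{equation*}
\alpha=\frac{d(r-1)}{2r}\in[0,1],
\end{equation*}
the inclusion being exactly equivalent to $r\in[1,1+\tfrac{2}{d}]$. This gives, pointwise in $t$,
\begin{equation*}
\|\sqrt{\rho(t)}\|_{L^{2r}(\mathbb{T}^d)}^{2r}\le C(d,r)\,\|\nabla\sqrt{\rho(t)}\|_{L^2(\mathbb{T}^d)}^{2r\alpha}\,\|\sqrt{\rho(t)}\|_{L^2(\mathbb{T}^d)}^{2r(1-\alpha)}.
\end{equation*}
Using $\|\sqrt{\rho(t)}\|_{L^2(\mathbb{T}^d)}^{2}=\|\rho(t)\|_{L^1(\mathbb{T}^d)}=\|\rho_0\|_{L^1(\mathbb{T}^d)}$ by \eqref{L1}, this factor becomes a constant depending only on $d,r$ and $\|\rho_0\|_{L^1(\mathbb{T}^d)}$.

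Next I would integrate in time from $0$ to $T$. The critical inequality $r\alpha=\tfrac{d(r-1)}{2}\le 1$ (again equivalent to $r\le 1+\tfrac{2}{d}$) allows H\"{o}lder's inequality in time with exponents $(1/(r\alpha),\,1/(1-r\alpha))$ when $r\alpha<1$, or a trivial identity when $r\alpha=1$, to obtain
\begin{equation*}
\int_0^T\|\nabla\sqrt{\rho(s)}\|_{L^2(\mathbb{T}^d)}^{2r\alpha}\,ds\le T^{1-r\alpha}\Big(\int_0^T\|\nabla\sqrt{\rho(s)}\|_{L^2(\mathbb{T}^d)}^{2}\,ds\Big)^{r\alpha}.
\end{equation*}
A Young inequality of the form $X^{r\alpha}\le r\alpha\,X+(1-r\alpha)$ with $X=\int_0^T\|\nabla\sqrt{\rho}\|_{L^2}^{2}ds$ then converts the power into a linear dependence (absorbing the $(1-r\alpha)$ term into the additive constant). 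Invoking the entropy dissipation estimate \eqref{r-4} to control $X$ by the right-hand side of \eqref{eqq-2} yields exactly the stated bound.

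No genuine obstacle is expected; the only point requiring care is verifying that the admissibility range of the Gagliardo--Nirenberg exponent $\alpha=\frac{d(r-1)}{2r}$ and the time-integrability condition $r\alpha\le 1$ both degenerate to the same endpoint $r=1+\tfrac{2}{d}$, which confirms that the upper bound on $r$ in the statement is optimal for this argument. All remaining estimates are routine H\"{o}lder/Young manipulations and an application of Proposition \ref{prp-1}.
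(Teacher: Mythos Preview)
Your approach is essentially the same as the paper's: rewrite $\|\rho\|_{L^r}^r=\|\sqrt{\rho}\|_{L^{2r}}^{2r}$, apply the Gagliardo--Nirenberg inequality \eqref{r-12} with exponent $2r\alpha=d(r-1)$, use mass conservation \eqref{L1} to freeze the $L^2$ factor, and then invoke the entropy dissipation \eqref{r-4}. The paper is terser about the final step (it just observes $d(r-1)\le 2$ and appeals to \eqref{r-4}), while you spell out the H\"older-in-time and Young manipulations; both are fine.

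One minor correction: your claim that $\alpha=\tfrac{d(r-1)}{2r}\in[0,1]$ is ``exactly equivalent'' to $r\in[1,1+\tfrac{2}{d}]$ is not right --- for $d\ge 3$ the Gagliardo--Nirenberg admissibility $\alpha\le 1$ only gives $r\le \tfrac{d}{d-2}$, which is strictly looser. The binding constraint is solely the time-integrability condition $r\alpha=\tfrac{d(r-1)}{2}\le 1$, as you correctly note later. This does not affect the argument.
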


\begin{proof}
For simplicity, denote by $\rho=:\rho^{\eta}$.
With the aid of (\ref{r-12}),  we have
\begin{align*}
 \int^T_0\|\rho\|^{r}_{L^{r}(\mathbb{T}^d)}dt=\int^T_0\|\sqrt{\rho}\|^{2r}_{L^{2r}(\mathbb{T}^d)}dt\leq &c(d)\int^T_0\|\nabla\sqrt{\rho}\|^{rd(1-1/r)}_{L^{2}(\mathbb{T}^{d})}
\|\sqrt{\rho}\|^{2r(1-\frac{d}{2}(1-1/r))}_{L^{2}(\mathbb{T}^{d})}dt\\
 \leq& c(d)\|\rho_0\|^{r(1-\frac{d}{2}(1-1/r))}_{L^{1}(\mathbb{T}^{d})}
 \int^T_0\|\nabla\sqrt{\rho}\|^{rd(1-1/r)}_{L^{2}(\mathbb{T}^{d})}
dt.
\end{align*}
When $r\in [1,1+\frac{2}{d}]$, it holds that $rd(1-1/r)\leq 2$. As a consequence of (\ref{r-4}), we get (\ref{eqq-2}).

\end{proof}

Recall that the finite entropy class ${\rm{Ent}}(\mathbb{T}^d)$ is defined by (\ref{t-16}). The following result states the existence of weak solutions to the skeleton equation (\ref{skeleton}).
\begin{proposition}\label{prp-2}
Assume that $ V$ satisfies Assumption (A1). Let
 $\rho_0\in \rm{Ent}(\mathbb{T}^d)$ and $g\in L^2([0,T]\times\mathbb{T}^d;\mathbb{R}^d)$. Then there exists a nonnegative weak solution $\rho\in L^{\infty}([0,T];L^1(\mathbb{T}^d))$ to (\ref{skeleton})
in the sense of Definition \ref{dfn-1}. Furthermore, for almost every $t\in [0,T]$,
\begin{equation}\label{eqq-7}
\|\rho(t)\|_{L^1(\mathbb{T}^d)}=\|\rho_0\|_{L^1(\mathbb{T}^d)},
\end{equation}
and
\begin{align}\label{eqq-8}
\int_{\mathbb{T}^d}\Psi(\rho(x,t))dx
+\int_0^t\|\nabla \sqrt{\rho}\|^2_{L^2(\mathbb{T}^d)}ds
\leq & \int_{\mathbb{T}^d}\Psi(\rho_0)dx+ c(d,\rho_0,V)(\|g\|_{L^2(\mathbb{T}^d\times[0,T];\mathbb{R}^d)}^2+1).
\end{align}
\end{proposition}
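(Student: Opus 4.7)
The plan is to truncate the initial datum to fit Proposition \ref{existence-2}, and then take a joint limit in the regularization parameters $(\eta,\gamma)$ and the truncation parameter $n$ via a diagonal argument. Set $\rho_0^n := \min(\rho_0, n) \in L^{\infty}(\mathbb{T}^d)$, so $\rho_0^n \to \rho_0$ in $L^1(\mathbb{T}^d)$ and $\int_{\mathbb{T}^d}\Psi(\rho_0^n)\,dx \to \int_{\mathbb{T}^d}\Psi(\rho_0)\,dx$ by monotone/dominated convergence. For each triple $(\eta,\gamma,n)\in (0,1)^2\times\mathbb{N}$, Proposition \ref{existence-2} produces a nonnegative weak solution $\rho^{\eta,\gamma,n}$ of (\ref{qq-25}) with initial data $\rho_0^n$, and the entropy bound of Proposition \ref{prp-1} is uniform in $(\eta,\gamma,n)$ because $\|\rho_0^n\|_{L^1}$ and $\int\Psi(\rho_0^n)$ are uniformly bounded. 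Together with conservation identity (\ref{L1}) and an $L^1([0,T];H^{-s})$ bound on $\partial_t\rho^{\eta,\gamma,n}$ for some $s\geq d/2+1$ (obtained by testing (\ref{qq-25}) against $H^s$-functions and controlling the convolution term via (\ref{r-10})), Lemma \ref{lem-3} supplies, along a diagonal subsequence, strong convergences $\rho^{\eta_k,\gamma_k,n_k}\to \rho$ in $L^1([0,T]\times\mathbb{T}^d)$ and $\sqrt{\rho^{\eta_k,\gamma_k,n_k}}\to \sqrt{\rho}$ in $L^2([0,T]\times\mathbb{T}^d)$, together with weak convergence $\nabla\sqrt{\rho^{\eta_k,\gamma_k,n_k}}\rightharpoonup \nabla\sqrt{\rho}$ in $L^2([0,T]\times\mathbb{T}^d;\mathbb{R}^d)$.

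\textbf{Identification of the limit in the standard terms.} Pass to the limit in the weak formulation of (\ref{qq-25}) tested against $\psi\in C^\infty(\mathbb{T}^d)$. The diffusion term, rewritten as $-2\int_0^t\int \sqrt{\rho^{\eta,\gamma,n}}\nabla\sqrt{\rho^{\eta,\gamma,n}}\cdot\nabla\psi\,dxds$ via Lemma \ref{lem-r-1}(i), converges by strong/weak pairing in $L^2$. For the noise term, the uniform majorization $\sigma^{1/2,\eta}(\zeta)\leq c_1\sqrt{\zeta}$ from (\ref{t-14}), combined with the local uniform convergence (\ref{s-38}) and strong $L^2$-convergence of $\sqrt{\rho^{\eta,\gamma,n}}$, gives $\sigma^{1/2,\eta}(\rho^{\eta,\gamma,n})\to \sqrt{\rho}$ in $L^2([0,T]\times\mathbb{T}^d)$ by a cut-off-plus-dominated-convergence argument. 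Mass conservation (\ref{eqq-7}) passes to the limit from (\ref{L1}) using strong $L^1$-convergence at a.e.\ $t$ after a further extraction, while the entropy bound (\ref{eqq-8}) follows from the $(\eta,\gamma,n)$-uniform estimate of Proposition \ref{prp-1} by convexity of $\Psi$ and weak $L^2$ lower semicontinuity of $\|\nabla\sqrt{\cdot}\|_{L^2}$.

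\textbf{Main obstacle: the convolution term.} The principal difficulty, as noted in the introduction, is identifying the limit of
\[ \sigma^{\eta}(\rho^{\eta,\gamma,n})\, V^{\gamma}\ast \rho^{\eta,\gamma,n} \longrightarrow \rho\, V\ast\rho \quad \text{in } L^1([0,T]\times\mathbb{T}^d;\mathbb{R}^d), \]
since no Lipschitz control of the bilinear convolution in $\rho$ is available and the kernel itself is only in $L^{p^{*}}(L^p)$. I would decompose
\begin{align*}
\sigma^{\eta}(\rho^{\eta,\gamma,n}) V^{\gamma}\ast\rho^{\eta,\gamma,n} - \rho\, V\ast\rho
&= \big(\sigma^{\eta}(\rho^{\eta,\gamma,n})-\rho^{\eta,\gamma,n}\big)V^{\gamma}\ast\rho^{\eta,\gamma,n} \\
&\quad + \rho^{\eta,\gamma,n}(V^{\gamma}-V)\ast\rho^{\eta,\gamma,n} \\
&\quad + \rho^{\eta,\gamma,n}\, V\ast(\rho^{\eta,\gamma,n}-\rho) + (\rho^{\eta,\gamma,n}-\rho) V\ast\rho.
\end{align*}
The first contribution vanishes because $\sigma^{\eta}(\zeta)\leq c_1^{2}\zeta$ and $\sigma^{\eta}(\zeta)\to \zeta$ pointwise; the second uses $V^{\gamma}\to V$ in $L^{p^{*}}(L^p)$ from (\ref{r-18}) combined with Young's convolution inequality and the uniform $L^1$ bound on $\rho^{\eta,\gamma,n}$; the remaining two terms exploit the strong $L^1$ convergence $\rho^{\eta,\gamma,n}\to \rho$ together with the key a priori estimate (\ref{r-10}), which, thanks to $p>d$, provides the $L^2$-integrability and equi-integrability required to conclude by Vitali's theorem. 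This step is the new ingredient compared to the local case treated in \cite{FG}, and it is precisely what makes the entropy regularity $\nabla\sqrt{\rho}\in L^2([0,T]\times\mathbb{T}^d;\mathbb{R}^d)$ compatible with the LPS-type hypothesis on $V$.
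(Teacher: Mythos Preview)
Your argument is correct in substance, but it differs from the paper's in its organization. The paper does \emph{three sequential} limits rather than one diagonal joint limit: first $\gamma\to 0$ with $\eta$ fixed to obtain a weak solution of the intermediate equation (\ref{kk-18}) (with $\sigma^\eta$ but the true kernel $V$), then $\eta\to 0$ with $\rho_0\in L^\infty$, and only at the very end $n\to\infty$ to remove the truncation of the initial datum. The advantage of the paper's ordering is that in the $\gamma\to 0$ step the boundedness (\ref{t-15}) of $\sigma^\eta$ for fixed $\eta$ makes the convolution term trivial to pass to the limit (see the splitting $I_1^k+I_2^k+I_3^k$ around (\ref{kk-19})); the real work is postponed to the $\eta\to 0$ step, where the paper handles the nonlinear convolution by \emph{reintroducing} an auxiliary smooth kernel $V^\gamma$ (now with $\gamma$ unrelated to the approximation) and splitting as in $J_{11}^k,J_{12}^k,J_{13}^k$, combining (\ref{r-10}) with the $\frac1{\gamma^{d+1}}$-blow-up of $\|V^\gamma\|_{L^\infty}$ against the strong $L^1$-convergence (\ref{rr-6}).

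Your joint-limit route avoids the three-step structure and instead relies on Vitali: the equi-integrability of each piece of your four-term decomposition follows from the uniform entropy estimate through (\ref{r-10}) (which gives $\sqrt{\rho_k}\,V^{\gamma_k}\!\ast\rho_k$ bounded in $L^2_{t,x}$) together with the $L^r$-bound of Corollary (\ref{eqq-2}) for some $r>1$ (which controls $\int_E\rho_k$ uniformly on small sets $E$). This is a legitimate and slightly more economical alternative to the paper's $V^\gamma$-trick; the price is that the equi-integrability step, which you only sketch, must be written out carefully (pairing $\|\sqrt{\rho_k}\mathbf 1_E\|_{L^2}$ with $\|\sqrt{\rho_k}V^{\gamma_k}\!\ast\rho_k\|_{L^2}$). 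Either route yields the same conclusion; the paper's is more explicit, yours is shorter once the Vitali bookkeeping is filled in.
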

\begin{proof}
We firstly prove the existence of weak solutions to (\ref{skeleton}) for nonnegative initial data $\rho_0\in L^{\infty}(\mathbb{T}^d)$. For any $\eta\in(0,1)$, let $\rho^{\eta}$ be a weak solution to (\ref{rrr-3}) constructed in Proposition \ref{existence-rhoeta} with the initial data $\rho_0$ and the control $g$.
It follows from (\ref{r-4}) and (\ref{eq-12}) that there exists $c>0$ independent of $\eta$ such that
\begin{align}\label{qq-r-6}
  \|\sqrt{\rho^{\eta}}\|^2_{L^2([0,T];H^1(\mathbb{T}^d))}\leq&  \int_{\mathbb{T}^d}\Psi(\rho_0)dx+ C(d,\rho_0,V)(\|g\|_{L^2(\mathbb{T}^d\times[0,T];\mathbb{R}^d)}^2+1).
\end{align}

For any $s>\frac{d}{2}+1$, with the aid of (\ref{t-14}), we get
\begin{align*}\notag
  \|\partial_t \rho^{\eta}\|_{L^1([0,T];H^{-s}(\mathbb{T}^d))}
\leq& 2 \|\sqrt{\rho^{\eta}}\nabla \sqrt{\rho^{\eta}}\|_{L^1([0,T];L^1(\mathbb{T}^d))}
+c^2_1\|\rho^{\eta}V\ast\rho^{\eta}\|_{L^1([0,T];L^1(\mathbb{T}^d))}\\
&+c_1\|g\sqrt{\rho^{\eta}}\|_{L^1([0,T];L^1(\mathbb{T}^d))},
\end{align*}
where $c_1$ is the constant appeared in Lemma \ref{lem-2}. Thanks to H\"{o}lder inequality, combining with (\ref{eq-12}) and (\ref{r-4}), it follows that
\begin{align*}
  \|\sqrt{\rho^{\eta}}\nabla \sqrt{\rho^{\eta}}\|_{L^1([0,T];L^1(\mathbb{T}^d))}\leq \|\rho_0\|_{L^1(\mathbb{T}^d)}T+ \int_{\mathbb{T}^d}\Psi(\rho_0)dx+ c(d,\rho_0,V)(\|g\|_{L^2(\mathbb{T}^d\times[0,T];\mathbb{R}^d)}^2+1).
\end{align*}
Regarding the kernel term, with the aid of (\ref{r-10}), (\ref{eq-12}) and (\ref{r-4}), it gives
\begin{align*}
\|\rho^{\eta}V\ast\rho^{\eta}\|_{L^1([0,T];L^1(\mathbb{T}^d))}
\leq& \int^T_0\|\sqrt{\rho^{\eta}}\|_{L^2(\mathbb{T}^d)}\|\sqrt{\rho^{\eta}}V\ast\rho^{\eta}\|_{L^2(\mathbb{T}^d)}dt\\
\leq & c(d,\|\rho_0\|_{L^1(\mathbb{T}^d)})\int^T_0\|\nabla\sqrt{\rho^{\eta}}\|^{\frac{d}{p}}_{L^{2}(\mathbb{T}^{d})}
\|V\|_{L^p(\mathbb{T}^d)}dt\\
\leq& C(d,\rho_0,V)\Big(\int_{\mathbb{T}^d}\Psi(\rho_0)dx+ c(d,\rho_0,V)(\|g\|_{L^2(\mathbb{T}^d\times[0,T];\mathbb{R}^d)}^2+1)\Big).
\end{align*}
Here we have used $p^{*}>\frac{2p}{p-d}>\frac{2p}{2p-d}$.
Based on the above, there exists $c>0$ independent of $\eta$ such that
\begin{align}\label{kk-16}
  \|\partial_t \rho^{\eta}\|_{L^1([0,T];H^{-s}(\mathbb{T}^d))}
\leq& C(d,\rho_0,V)\Big(\int_{\mathbb{T}^d}\Psi(\rho_0)dx+ c(d,\rho_0,V)(\|g\|_{L^2(\mathbb{T}^d\times[0,T];\mathbb{R}^d)}^2+1)\Big).
\end{align}
Since estimates (\ref{eq-12})-(\ref{kk-16}) are uniform on $\eta$, we are able to apply Lemma \ref{lem-3} to $\{\rho^{\eta}\}_{\eta\in (0,1)}$. Then, there exists $\rho\in L^1([0,T];L^1(\mathbb{T}^d))$, with $\nabla\sqrt{\rho}\in L^2([0,T];L^2(\mathbb{T}^d;\mathbb{R}^d))$ such that,
passing to a subsequence $\{\rho^{\eta^k}\}_{k\geq 1}$ satisfying
\begin{align}\label{qq-19}
&\rho^{\eta^k}\rightarrow \rho\ {\rm{almost\ everywhere\ and\ strongly\ in\ }} L^1([0,T];L^1(\mathbb{T}^d)),\notag\\
&\sqrt{\rho^{\eta^k}}\rightharpoonup \sqrt{\rho}\ {\rm{weakly\ in\ }} L^2([0,T];H^1(\mathbb{T}^d)).
 \end{align}
Owing to (\ref{eq-12}) and (\ref{qq-19}), we have
 \begin{equation}\label{kk-14}
\|\rho\|_{L^{\infty}([0,T];L^1(\mathbb{T}^d))}=\|\rho_0\|_{L^1(\mathbb{T}^d)}.
\end{equation}
With the aid of (\ref{qq-r-6}) and (\ref{qq-19}), by the weak lower-semicontinuity of the Sobolev norm, it gives
\begin{align}\label{kk-15}
  \|\sqrt{\rho}\|^2_{L^2([0,T];H^1(\mathbb{T}^d))}\leq  \int_{\mathbb{T}^d}\Psi(\rho_0)dx+ c(d,\rho_0,V)(\|g\|_{L^2(\mathbb{T}^d\times[0,T];\mathbb{R}^d)}^2+1). .
\end{align}

Taking into account (\ref{kk-14})-(\ref{kk-15}), similar to the proof of (\ref{eqq-2}) and (\ref{t-17-1}), for $r\in [1,1+\frac{2}{d}]$, it yields
\begin{align}\label{s-53}
\rho\in L^r([0,T];L^r(\mathbb{T}^d)),\quad {\rm{and}}\quad \nabla\cdot (\rho V\ast \rho)\in L^1([0,T];L^1(\mathbb{T}^d)).
\end{align}
In addition, we claim that as $ k\rightarrow \infty$,
 \begin{align}\label{rr-6}
 \sigma^{\eta^k}(\rho^{\eta^k})\rightarrow\rho \ {\rm{strongly\ in\ }} L^1([0,T];L^1(\mathbb{T}^d)).
 \end{align}
Indeed, by (\ref{t-14}), for any $M>0$, there exists a constant $c_1$ independent of $k$ such that
  \begin{align}\notag
\int^T_0 \| \sigma^{\eta^k}(\rho^{\eta^k})-\rho\|_{L^1}dt
 &\leq  \int^T_0\| \sigma^{\eta^k}(\rho^{\eta^k})-\rho^{\eta^k}\|_{L^1}dt
 + \int^T_0\|\rho^{\eta^k}-\rho\|_{L^1}dt\\ \notag
&\leq  \int_{\{(x,t)\in \mathbb{T}^d\times [0,T]: 0<\rho^{\eta^k}(x,t)\leq M\}}|\sigma^{\eta^k}(\rho^{\eta^k})-\rho^{\eta^k}|dxdt\\
\notag
& + \int_{\{(x,t)\in\mathbb{T}^d\times [0,T]: \rho^{\eta^k}(x,t)> M\}}|\sigma^{\eta^k}(\rho^{\eta^k})-\rho^{\eta^k}|dxdt+ \int^T_0\|\rho^{\eta^k}-\rho\|_{L^1(\mathbb{T}^d)}dt\\ \notag
&\leq  \int_{\{(x,t)\in \mathbb{T}^d\times [0,T]: 0<\rho^{\eta^k}(x,t)\leq M\}}|\sigma^{\eta^k}(\rho^{\eta^k})-\rho^{\eta^k}|dxdt\\
\label{s-51}
 &+c_1\int_{\{(x,t)\in\mathbb{T}^d\times [0,T]: \rho^{\eta^k}(x,t)> M\}}|\rho^{\eta^k}|dxdt+ \int^T_0\|\rho^{\eta^k}-\rho\|_{L^1(\mathbb{T}^d)}dt.
 \end{align}
By (\ref{eqq-2}), for some $r\in (1,1+\frac{2}{d}]$, we have
\begin{align*}
\int_{\{(x,t)\in\mathbb{T}^d\times [0,T]: \rho^{\eta^k}(x,t)> M\}}|\rho^{\eta^k}|dxdt
\leq& \frac{1}{M^{r-1}}\int^T_0\|\rho^{\eta^k}\|^r_{L^r(\mathbb{T}^d)}dt\\
\leq& \frac{1}{M^{r-1}}c(d,\|\rho_0\|_{L^{1}(\mathbb{T}^{d})} ) \Big[\int_{\mathbb{T}^d}\Psi(\rho_0)dx+ c(c_1)\|g\|_{L^2(\mathbb{T}^d\times[0,T];\mathbb{R}^d)}^2\\
& + C(c_1,d,\|\rho_0\|_{L^1(\mathbb{T}^d)}, \|V\|_{L^{p^*}([0,T];L^p(\mathbb{T}^d))})\Big]\\
\rightarrow&  0,\quad {\rm{as\ }} M\rightarrow \infty.
\end{align*}
Hence, for any $\iota>0$, there exists $M_0$ independent of $k$ such that
\begin{align*}
  2\int_{\{(x,t)\in\mathbb{T}^d\times [0,T]: \rho^{\eta^k}(x,t)> M\}}|\rho^{\eta^k}|dxdt<\frac{\iota}{3}.
\end{align*}
Taking $M=M_0$ in (\ref{s-51}),  we obtain
\begin{align*}
\int^T_0 \| \sigma^{\eta^k}(\rho^{\eta^k})-\rho\|_{L^1}dt
\leq &\frac{\iota}{3} +\int_{\{(x,t)\in \mathbb{T}^d\times [0,T]: 0<\rho^{\eta^k}(x,t)\leq M_0\}}|\sigma^{\eta^k}(\rho^{\eta^k})-\rho^{\eta^k}|dxdt
+ \int^T_0\|\rho^{\eta^k}-\rho\|_{L^1(\mathbb{T}^d)}dt.
 \end{align*}
For such a constant $M_0$, by using (\ref{s-38}), we get
\begin{align*}
  \int_{\{(x,t)\in \mathbb{T}^d\times [0,T]: 0<\rho^{\eta^k}(x,t)\leq M_0\}}|\sigma^{\eta^k}(\rho^{\eta^k})-\rho^{\eta^k}|dxdt\rightarrow 0, \quad {\rm{as}}\  k\rightarrow \infty,
\end{align*}
This, together with (\ref{qq-19}), yields that (\ref{rr-6}) holds.
In the following, we aim to show that the limit $\rho$ is a weak solution of (\ref{skeleton}). This can be obtained by passing to the limits of the weak formulation of (\ref{rrr-3}). We highlight the proof of passage to the limits for the kernel term. In view of (\ref{s-53}), by integration by parts formula, for every $\psi\in C^{\infty}(\mathbb{T}^d)$, it suffices to prove
 \begin{align}\label{eqqq-1}
\lim_{k\rightarrow \infty}\int^T_0\int_{\mathbb{T}^d}(\sigma^{\eta^k}(\rho^{\eta^k})V\ast \rho^{\eta^k}-\rho V\ast\rho )\cdot \nabla \psi dxdt= 0.
 \end{align}
Clearly, we have
\begin{align}\notag
&\int^T_0\int_{\mathbb{T}^d}(\sigma^{\eta^k}(\rho^{\eta^k})V\ast \rho^{\eta^k}-\rho V\ast\rho )\cdot \nabla \psi dxdt\\
\notag
=&\int^T_0\int_{\mathbb{T}^d}(\sigma^{\eta^k}(\rho^{\eta^k})-\rho)V\ast \rho^{\eta^k}\cdot \nabla \psi dxdt
+\int^T_0\int_{\mathbb{T}^d}\rho V\ast (\rho^{\eta^k}-\rho )\cdot \nabla \psi dxdt\\
\label{kk-17}
=:& J^k_1+J^k_2.
\end{align}
Thanks to the fact that $V^{\gamma}$ satisfies (\ref{kk-65})-(\ref{r-18}), by H\"older's inequality and convolution Young's inequality, we get
\begin{align*}\notag
J^k_1\leq&\int_0^T\|(\sigma^{\eta^k}(\rho^{\eta^k})-\rho)V\ast\rho^{\eta^k}\|_{L^1(\mathbb{T}^d)}
\|\nabla\psi\|_{L^{\infty}(\mathbb{T}^d)}dt\\
\notag
\leq &  \int_0^T\|\sigma^{\eta^k}(\rho^{\eta^k})|V-V^{\gamma}|\ast\rho^{\eta^k}\|_{L^1(\mathbb{T}^d)}\|\nabla\psi\|_{L^{\infty}(\mathbb{T}^d)}dt
\\ \notag
& + \int_0^T\|\rho|V-V^{\gamma}|\ast\rho^{\eta^k}\|_{L^1(\mathbb{T}^d)}\|\nabla\psi\|_{L^{\infty}(\mathbb{T}^d)}dt
\\ \notag
& + \int_0^T\|(\sigma^{\eta^k}(\rho^{\eta^k})-\rho)V^{\gamma}\ast\rho^{\eta^k}\|_{L^1(\mathbb{T}^d)}\|\nabla\psi\|_{L^{\infty}(\mathbb{T}^d)}dt\\
\notag
=: & J^k_{11}+J^k_{12}+J^k_{13},
\end{align*}
Since $p>d$, by (\ref{r-10}), (\ref{eq-12}) and (\ref{qq-r-6}), it follows that
\begin{align}\label{r-15}
J^k_{11} \leq & c_1\int_0^T\|\sqrt{\rho^{\eta^k}}\sqrt{\rho^{\eta^k}}|V-V^{\gamma}|\ast\rho^{\eta^k}\|_{L^1(\mathbb{T}^d)}\|\nabla\psi\|_{L^{\infty}(\mathbb{T}^d)}dt\\
\notag
\leq & c_1\int_0^T\|\sqrt{\rho^{\eta^k}}\|_{L^2(\mathbb{T}^d)}\|\sqrt{\rho^{\eta^k}}|V-V^{\gamma}|\ast\rho^{\eta^k}\|_{L^2(\mathbb{T}^d)}dt\\
\notag
\leq & c_1\|\rho_0\|^{2-\frac{d}{2p}}_{L^1(\mathbb{T}^d)}c(d)\int_0^T\|\nabla\sqrt{\rho^{\eta^k}}\|^{\frac{d}{p}}_{L^{2}(\mathbb{T}^{d})}\|V-V^{\gamma}\|_{L^p(\mathbb{T}^d)}
dt\\ \notag
\leq& C(c_1,d,\rho_0,V,T) \Big[\int_{\mathbb{T}^d}\Psi(\rho_0)dx+ \|g\|_{L^2(\mathbb{T}^d\times[0,T];\mathbb{R}^d)}^2+1\Big] \int^T_0\|V-V^{\gamma}\|^{p^{*}}_{L^p(\mathbb{T}^d)}dt.
\end{align}
Similar to (\ref{r-15}), by (\ref{kk-14}) and (\ref{kk-15}), we have
\begin{align}\label{r-16}
J^k_{12}\leq& C(c_1,d,\rho_0,V,T) \Big[\int_{\mathbb{T}^d}\Psi(\rho_0)dx+ \|g\|_{L^2(\mathbb{T}^d\times[0,T];\mathbb{R}^d)}^2+1\Big] \int^T_0\|V-V^{\gamma}\|^{p^{*}}_{L^p(\mathbb{T}^d)}dt.
\end{align}
The term $J^k_{13}$ can be estimated as
\begin{align}\notag
J^k_{13}
\leq &\int_0^T\|\sigma^{\eta^k}(\rho^{\eta^k})-\rho\|_{L^1(\mathbb{T}^d)}
\|V^{\gamma}\|_{L^{\infty}(\mathbb{T}^d)}\|\rho^{\eta^k}\|_{L^1(\mathbb{T}^d)}
\|\nabla\psi\|_{L^{\infty}(\mathbb{T}^d)}dt\\
\label{r-17}
\leq& C(\|\rho_0\|_{L^1(\mathbb{T}^d)}, \|V\|_{L^{p^*}([0,T];L^p(\mathbb{T}^d))})\frac{1}{\gamma^{d+1}} \int_0^T\|\sigma^{\eta^k}(\rho^{\eta^k})-\rho\|_{L^1(\mathbb{T}^d)}dt.
\end{align}
Combining (\ref{r-15})-(\ref{r-17}), we obtain
\begin{align}\notag
J^k_1\leq&C(c_1,d,\rho_0,V,T) \Big[\int_{\mathbb{T}^d}\Psi(\rho_0)dx+ \|g\|_{L^2(\mathbb{T}^d\times[0,T];\mathbb{R}^d)}^2+1\Big] \int^T_0\|V-V^{\gamma}\|^{p^{*}}_{L^p(\mathbb{T}^d)}dt\\
\label{s-52}
+&C(\|\rho_0\|_{L^1(\mathbb{T}^d)}, \|V\|_{L^{p^*}([0,T];L^p(\mathbb{T}^d))})\frac{1}{\gamma^{d+1}} \int_0^T\|\sigma^{\eta^k}(\rho^{\eta^k})-\rho\|_{L^1(\mathbb{T}^d)}dt.
\end{align}
Taking into account (\ref{r-18}), (\ref{rr-6}) and (\ref{s-52}), it follows that $J^k_1\rightarrow 0$ as $k\rightarrow \infty$.

The term $J^k_2$ can be treated in a similar way to $J^k_1$. With the aid of (\ref{r-10}), (\ref{eq-12}), (\ref{kk-14}) and (\ref{kk-15}), for $p>d$, we have
\begin{align*}
J^k_2\leq& C(c_1,d,\rho_0,V,T) \Big[\int_{\mathbb{T}^d}\Psi(\rho_0)dx+ \|g\|_{L^2(\mathbb{T}^d\times[0,T];\mathbb{R}^d)}^2+1\Big] \int^T_0\|V-V^{\gamma}\|^{p^{*}}_{L^p(\mathbb{T}^d)}dt\\
+&C(\|\rho_0\|_{L^1(\mathbb{T}^d)}, \|V\|_{L^{p^*}([0,T];L^p(\mathbb{T}^d))})\frac{1}{\gamma^{d+1}} \int_0^T\|\sigma^{\eta^k}(\rho^{\eta^k})-\rho\|_{L^1(\mathbb{T}^d)}dt.
\end{align*}
A repetition of the above method, we have $J^k_2\rightarrow 0$ as $k\rightarrow \infty$.
As a result, (\ref{eqqq-1}) is shown. Thus, thanks to (\ref{kk-14}) and (\ref{kk-15}), we conclude that $\rho$ is a nonnegative weak solution to (\ref{skeleton}).

For the initial data $\rho_0\in {\rm{Ent}} (\mathbb{T}^d)$, let $\{{\rho^n_0:=\rho_0\wedge n}\}_{n\in \mathbb{N}}\subseteq L^{\infty}(\mathbb{T}^d)$ be a sequence with uniformly bounded entropy such that, as $n\rightarrow \infty$, $\rho^n_0\rightarrow\rho_0$ strongly in $L^1(\mathbb{T}^d)$. For every $n\in \mathbb{N}$, let $\rho^n\in L^{\infty}([0,T];L^1(\mathbb{T}^d))$ be a weak solution to  (\ref{skeleton}) satisfying  (\ref{eqq-7}) and (\ref{eqq-8}). Since (\ref{eqq-7}), (\ref{eqq-8}) and (\ref{kk-16}) hold uniformly on $n\geq 1$, we can apply Lemma \ref{lem-3}  to $\{\rho^n\}_{n\geq 1}$. Hence, there exists $\rho\in L^1([0,T];L^1(\mathbb{T}^d))$, $\nabla\sqrt{\rho}\in L^2([0,T];L^2(\mathbb{T}^d;\mathbb{R}^d ))$ such that,
passing to a subsequence still denoted by $n$ satisfying that $\rho^n$ converges to $\rho$ strongly in $L^1([0,T];L^1(\mathbb{T}^d))$.
A repetition of the arguments in the proof of (\ref{eqqq-1}), we can show that $\rho$ is a weak solution of (\ref{skeleton})  with initial data $\rho_0$ and $\rho$ satisfies (\ref{eqq-7}) and (\ref{eqq-8}).
\end{proof}

Based on the above, we will provide a result on the stability of the skeleton equation (\ref{skeleton}), which lays the foundation for the study of large deviations.
\begin{proposition}\label{prp-3}
Assume that $ V$ satisfies Assumptions (A1) and (A2). Let $\rho_0\in \rm{Ent}(\mathbb{T}^d)$.
 For any $N>0$, assume that $\{g_n\}_{n\in\mathbb{N}_+}, g\subseteq L^2([0,T]\times\mathbb{T}^d;\mathbb{R}^d)$ satisfying
\begin{align*}
\sup_{n\geq 1}\int^T_0 \|g_n(s)\|^2_{L^2(\mathbb{T}^d)}ds\leq N,
\quad g_n\rightharpoonup g\ {\rm{weakly\ in\ }} L^2([0,T]\times\mathbb{T}^d;\mathbb{R}^d).
\end{align*}
For every $n\in \mathbb{N}$, let $\rho_n$ be the weak solution of the skeleton equation (\ref{skeleton}) with control $g_n$ and initial data $\rho_0$. Let $\rho$ be the weak solution of the skeleton equation to (\ref{skeleton}) with control $g$ and initial data $\rho_0$. Then
\begin{align}\label{qq-r-7}
\rho_n\rightarrow\rho\ {\rm{strongly\ in\ }} L^1([0,T];L^1(\mathbb{T}^d)).
\end{align}
as $n\rightarrow \infty$.
\end{proposition}
\begin{proof}

Due to Proposition \ref{L1uniq} and Theorem \ref{thm-1}, $\{\rho_n\}_{n\in \mathbb{N}}$ and $\rho$ are the unique renormalized kinetic solutions to (\ref{skeleton}) with $g_n$ and $g$, respectively. With the help of (\ref{eqq-7}), (\ref{eqq-8}) and (\ref{kk-16}), by Lemma \ref{lem-3}, it follows that $\{\rho_n\}_{n\in \mathbb{N}}$ is relatively pre-compact on $L^{1}([0,T];L^1(\mathbb{T}^d))$ and $\{\sqrt{\rho_n}\}_{n\in \mathbb{N}}$ is relatively pre-compact on $L^{2}([0,T];L^2(\mathbb{T}^d))$. Thus, there exists $\rho^*\in L^{1}([0,T];L^1(\mathbb{T}^d))$, such that, passing to a subsequence (not relabeled) such that,
\begin{align*}
&\rho_{n}\rightarrow \rho^{*}\ {\rm{almost\ everywhere\ and\ strongly\ in\ }} L^1([0,T];L^1(\mathbb{T}^d)),\\
&\sqrt{\rho_{n}}\rightarrow \sqrt{\rho^*}\ {\rm{strongly\ in}}\ L^2([0,T];L^2(\mathbb{T}^d)).
 \end{align*}
To achieve (\ref{qq-r-7}), it suffices to show that $\rho^{*}$ is a weak solution of the skeleton equation (\ref{skeleton}) with control $g$ and initial data $\rho_0$. Compared with the proof of Proposition \ref{prp-2}, the passage to the limits for the kernel term can be obtained in the same procedure. Moreover, the passage to the limits for the control term can be obtained by using H\"older's inequality, the $L^1([0,T];L^1(\mathbb{T}^d))$ convergence of $\rho_n$ and the weak convergence of $g_n$. This completes the proof.
\end{proof}

\section{Large deviations and the weak convergence approach}\label{sec-4}
\subsection{Preliminaries on large deviations }
We start with a brief account of notions of large deviations.
Let $\mathcal{E}$ be a Polish space. Let $\{X^\varepsilon\}_{\varepsilon>0}$ be a family of $\mathcal{E}$-valued random variables defined on a given probability space $(\Omega, \mathcal{F}, \mathbb{P})$.
\begin{definition}
 A function $I: \mathcal{E}\rightarrow [0,\infty]$ is called a rate function if $I$ is lower semicontinuous.
\end{definition}
\begin{definition}(Large deviation principle)
The sequence $\{X^\varepsilon\}$ is said to satisfy the large deviation principle with rate function $I$ if for each Borel subset $A$ of $\mathcal{E}$
      \begin{align*}
      -\inf_{x\in A^o}I(x)\leq \lim \inf_{\varepsilon\rightarrow 0}\varepsilon \log \mathbb{P}(X^\varepsilon\in A)\leq \lim \sup_{\varepsilon\rightarrow 0}\varepsilon \log \mathbb{P}(X^\varepsilon\in A)\leq -\inf_{x\in \bar{A}}I(x),
      \end{align*}
      where $A^o$ and $\bar{A}$ denote the interior and closure of $A$ in $\mathcal{E}$, respectively.
\end{definition}

Let $U,\mathcal{U}$ be two Hilbert spaces with Hilbert-Schmidt embedding $U\subset \mathcal{U}$. Suppose $W(t)$ is a $U$-cylindrical Wiener process defined on a filtered probability space $(\Omega, \mathcal{F},\{\mathcal{F}_t\}_{t\in [0,T]}, \mathbb{P} )$, then the paths of $W$ take values in $C([0,T];\mathcal{U})$. Now we define
\begin{align*}
\mathcal{A}&:=\{\phi: \phi\ is\ a\ U\text{-}valued\ \{\mathcal{F}_t\}\text{-}predictable\ process\ such\ that \ \int^T_0 |\phi(s)|^2_Uds<\infty\ \mathbb{P}\text{-}a.s.\};\\
S_N&:=\{ h\in L^2([0,T];U): \int^T_0 |h(s)|^2_Uds\leq N\};\\
\mathcal{A}_N&:=\{\phi\in \mathcal{A}: \phi(\omega)\in S_N,\ \mathbb{P}\text{-}a.s.\}.
\end{align*}
Here and in the sequel of this paper, we will always refer to the weak topology on the set $S_N$.

Suppose for each $\varepsilon>0, \mathcal{G}^{\varepsilon}: C([0,T];\mathcal{U})\rightarrow \mathcal{E}$ is a measurable map and let $X^{\varepsilon}:=\mathcal{G}^{\varepsilon}(W)$. Now, we list below sufficient conditions for the large deviation principle of the sequence $X^{\varepsilon}$ as $\varepsilon\rightarrow 0$.
\begin{description}
  \item[\textbf{Condition A} ] There exists a measurable map $\mathcal{G}^0: C([0,T];\mathcal{U})\rightarrow \mathcal{E}$ such that the following conditions hold.
\end{description}
\begin{description}
  \item[(a)] For every $N<\infty$, let $\{g^{\varepsilon}: \varepsilon>0\}$ $\subseteq \mathcal{A}_N$. If $g^{\varepsilon}$ converges to $g$ as $S_N$-valued random elements in distribution, then $\mathcal{G}^{\varepsilon}(W(\cdot)+\frac{1}{\sqrt{\varepsilon}}\int^{\cdot}_{0}g^\varepsilon(s)ds)$ converges in distribution to $\mathcal{G}^0(\int^{\cdot}_{0}g(s)ds)$.
  \item[(b)] For every $N<\infty$, the set $K_N=\{\mathcal{G}^0(\int^{\cdot}_{0}g(s)ds): g\in S_N\}$ is a compact subset of $\mathcal{E}$.
\end{description}

The following result is due to Budhiraja et al. in \cite{BD}.
\begin{theorem}\label{thm-7}
If $\{\mathcal{G}^{\varepsilon}\}$ satisfies {condition A}, then $X^{\varepsilon}$ satisfies the large deviation principle on $\mathcal{E}$ with the
following good rate function $I$ defined by
\begin{eqnarray}\label{equ-27-1}
I(f)=\inf_{\{g\in L^2([0,T];U): f= \mathcal{G}^0(\int^{\cdot}_{0}g(s)ds)\}}\Big\{\frac{1}{2}\int^T_0|g(s)|^2_{U}ds\Big\},\ \ \forall f\in\mathcal{E}.
\end{eqnarray}
By convention, $I(f)=\infty$, if  $\{g\in L^2([0,T];U): f= \mathcal{G}^0(\int^{\cdot}_{0}g(s)ds)\}=\emptyset.$
\end{theorem}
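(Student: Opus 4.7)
The plan is to prove the equivalent Laplace principle
\[
\lim_{\varepsilon \to 0} -\varepsilon \log \mathbb{E}\bigl[e^{-F(X^\varepsilon)/\varepsilon}\bigr] = \inf_{f \in \mathcal{E}}\bigl\{F(f) + I(f)\bigr\}
\]
for every bounded continuous $F:\mathcal{E}\to\mathbb{R}$, and then invoke the standard equivalence of the Laplace principle and the LDP for good rate functions on Polish spaces (Varadhan--Bryc). Goodness of $I$ is almost immediate: compactness of the sublevel set $\{I\leq N\}$ is inherited from the compactness of $K_{2N}$ in Condition A(b), and lower semicontinuity follows from a standard argument combining Condition A(a) with the weak lower semicontinuity of $g\mapsto\|g\|_{L^2}^2$.

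The core tool is the Bou\'e--Dupuis variational representation: since $X^\varepsilon = \mathcal{G}^\varepsilon(W)$,
\[
-\varepsilon \log \mathbb{E}\bigl[e^{-F(X^\varepsilon)/\varepsilon}\bigr]
= \inf_{g \in \mathcal{A}} \mathbb{E}\biggl[\frac{1}{2}\int_0^T |g(s)|_U^2\,ds + F\Bigl(\mathcal{G}^\varepsilon\bigl(W + \frac{1}{\sqrt{\varepsilon}}\int_0^{\cdot} g(s)\,ds\bigr)\Bigr)\biggr].
\]
For the upper bound on the Laplace functional, I would fix $f \in \mathcal{E}$ with $I(f) < \infty$ and choose a deterministic $\bar g \in L^2([0,T];U)$ with $\mathcal{G}^0(\int_0^\cdot \bar g\,ds) = f$ and $\tfrac{1}{2}\|\bar g\|^2$ arbitrarily close to $I(f)$. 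Viewing $\bar g$ as a constant element of $\mathcal{A}_N$ for some $N$, Condition A(a) gives $\mathcal{G}^\varepsilon(W + \tfrac{1}{\sqrt\varepsilon}\int_0^\cdot \bar g\,ds) \Rightarrow f$; bounded convergence and the arbitrariness of $f, \bar g$ yield $\limsup_\varepsilon \leq \inf_f\{F(f)+I(f)\}$.

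For the lower bound, I would pick $\varepsilon$-near-optimal controls $g^\varepsilon \in \mathcal{A}$ achieving the infimum up to error $\varepsilon$. Boundedness of $F$ gives $\sup_\varepsilon \mathbb{E}\|g^\varepsilon\|^2 < \infty$; truncating $g^\varepsilon$ by the indicator of $\{\|g^\varepsilon\|^2 \leq N\}$ produces a version in $\mathcal{A}_N$ at an error controlled via Chebyshev and the $L^\infty$-bound on $F$. Weak compactness of $S_N$ and Prokhorov then furnish a subsequence along which these truncated controls converge in distribution to some $g \in S_N$; Condition A(a) transfers this to $\mathcal{G}^\varepsilon(W + \tfrac{1}{\sqrt\varepsilon}\int_0^\cdot g^\varepsilon\,ds) \Rightarrow \mathcal{G}^0(\int_0^\cdot g\,ds)$. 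Fatou, combined with the weak lower semicontinuity of $\|\cdot\|^2$ along a Skorokhod realization, closes the loop against $\inf_f\{F(f)+I(f)\}$ upon letting $N\to\infty$.

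The main technical obstacle is this last step: upgrading an in-mean $L^2$-bound on the near-optimal controls to actual almost-sure inclusion in $\mathcal{A}_N$ without losing the near-optimality, and then realizing the weak convergence on a common probability space via Skorokhod representation so that Condition A(a) can be invoked pathwise in the Brownian motion together with its random control. Once this reduction is carried out, the remaining manipulations with bounded continuous test functionals are standard.
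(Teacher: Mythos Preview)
The paper does not prove this theorem at all: it is simply quoted from Budhiraja and Dupuis \cite{BD} (see the sentence ``The following result is due to Budhiraja et al.\ in \cite{BD}'' immediately preceding the statement). Your proposal is a faithful outline of the standard Budhiraja--Dupuis argument via the Bou\'e--Dupuis variational formula and the Laplace principle, so in that sense you are reproducing exactly what the paper is citing rather than diverging from it.
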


\subsection{Statement of the main result}


The purpose of this section is to prove large deviations for the renormalized kinetic solution of (\ref{s-1}) along a joint limit for the noise intensity $\varepsilon\rightarrow 0$ and the ultra-violet cut-off $K(\varepsilon)\rightarrow \infty$.

 Theorem \ref{thm-14} implies that there exists a measurable mapping
 \begin{align*}
   \mathcal{G}^{\varepsilon,K}:  C([0,T];\mathbb{R}^{\infty})\rightarrow L^{1}([0,T];L^1(\mathbb{T}^d))
 \end{align*}
  such that for every $\rho_0\in {\rm{Ent}} (\mathbb{T}^d)$,
  \begin{align}\label{s-6}
    \rho^{\varepsilon,K}=\mathcal{G}^{\varepsilon,K}\Big((B^k,W^k)_{|k|\leq K}\Big),\quad \mathbb{P}-a.s.,
  \end{align}
where $\rho^{\varepsilon,K}$ is the unique stochastic kinetic solution of (\ref{s-2}) with initial data $\rho_0$.

For any $g\in L^2([0,T];L^2(\mathbb{T}^d;\mathbb{R}^d))$, consider the skeleton equation
\begin{eqnarray}\label{s-54}
\left\{
  \begin{array}{ll}
   \partial_t\rho=\Delta\rho -\nabla\cdot (\rho V\ast\rho)-\nabla\cdot (\sqrt{\rho}g) , &  \\
    \rho(\cdot,0)=\rho_0\in {\rm{Ent}} (\mathbb{T}^d), &
  \end{array}
\right.
\end{eqnarray}
which has been studied in Sections \ref{sec-3} and \ref{sec-4}.
Combining the results of Proposition \ref{L1uniq} and Proposition \ref{prp-2}, (\ref{s-54}) admits a unique weak solution $\rho^g$. It implies that there exists a measurable mapping $\mathcal{G}^0: C([0,T];\mathbb{R}^{\infty})\rightarrow L^1([0,T];L^1(\mathbb{T}^d))$ such that
\begin{align}\label{k-30}
\mathcal{G}^0\Big(\Big(\int^{\cdot}_0 g_k(s)ds,\int^{\cdot}_0 g^{\sharp}_k(s)ds\Big)_{k\in \mathbb{N}}\Big):=\rho^g(\cdot),
\end{align}
where $g_k(s)=( g(\cdot,s), \cos(k\cdot ))$ and $g^{\sharp}_k(s)=(g(\cdot,s), \sin(k\cdot ))$.


Our main result of this paper reads as follows.
\begin{theorem}\label{thm-2}
Assume that $V$ satisfies Assumptions (A1) and (A2). Let $\rho_0\in  {\rm{Ent}} (\mathbb{T}^d)$, $\varepsilon\in (0,1)$, $K\in \mathbb{N}$ satisfying $\varepsilon K^{d+2}(\varepsilon)\rightarrow 0, K(\varepsilon)\rightarrow \infty$. Then the solutions $\{\rho^{\varepsilon,K(\varepsilon)}(\rho_0)\}_{\varepsilon\in (0,1)}$ of (\ref{s-2}) satisfy a large deviation principle on $L^1([0,T];L^1(\mathbb{T}^d))$ with rate function $I: L^1([0,T];L^1(\mathbb{T}^d))\rightarrow [0,\infty]$ defined by
\begin{align}\label{kk-1}
  I(\rho)=\frac{1}{2}\inf\Big\{\|g\|^2_{L^2([0,T];L^2(\mathbb{T}^d;\mathbb{R}^d))}: \partial_t \rho=\Delta \rho-\nabla\cdot (\rho V\ast \rho)-\nabla\cdot (\sqrt{\rho}g)\ {\rm{and}}\ \rho(\cdot, 0)=\rho_0\Big\}.
\end{align}

\end{theorem}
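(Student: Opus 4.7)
The plan is to apply the Budhiraja--Dupuis--Maroulas criterion (Theorem \ref{thm-7}) to the solution map $\mathcal{G}^{\varepsilon,K(\varepsilon)}$ given by (\ref{s-6}) and the limiting map $\mathcal{G}^0$ from (\ref{k-30}). Accordingly, I must verify the two parts of Condition A: (b) compactness of the level sets $K_N=\{\mathcal{G}^0(\int_0^\cdot g\,ds):g\in S_N\}$ in $L^1([0,T];L^1(\mathbb{T}^d))$, and (a) weak convergence of the controlled stochastic solutions $\rho^{\varepsilon,g^\varepsilon}:=\mathcal{G}^{\varepsilon,K(\varepsilon)}(W+\tfrac{1}{\sqrt\varepsilon}\int_0^\cdot g^\varepsilon\,ds)$ toward $\mathcal{G}^0(\int_0^\cdot g\,ds)$ whenever $g^\varepsilon\in\mathcal{A}_N$ converges in distribution to $g$.

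For part (b), given $\{g_n\}\subseteq S_N$, reflexivity of $L^2([0,T]\times\mathbb{T}^d;\mathbb{R}^d)$ produces a weakly convergent subsequence $g_{n_k}\rightharpoonup g\in S_N$. The weak-strong continuity of the skeleton equation established in Proposition \ref{prp-3} then yields $\mathcal{G}^0(\int_0^\cdot g_{n_k}\,ds)\to\mathcal{G}^0(\int_0^\cdot g\,ds)$ strongly in $L^1([0,T];L^1(\mathbb{T}^d))$, giving compactness. Proposition \ref{L1uniq} and Proposition \ref{prp-2} ensure that $\mathcal{G}^0$ is well-defined on all of $S_N$.

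For part (a), by Girsanov's theorem the shifted process $\rho^{\varepsilon,g^\varepsilon}$ is the unique renormalized kinetic solution of the stochastic controlled equation
\begin{equation*}
\partial_t\rho^{\varepsilon,g^\varepsilon}=\Delta\rho^{\varepsilon,g^\varepsilon}-\nabla\cdot(\rho^{\varepsilon,g^\varepsilon}V\ast\rho^{\varepsilon,g^\varepsilon})-\nabla\cdot(\sqrt{\rho^{\varepsilon,g^\varepsilon}}\,\Pi^{K(\varepsilon)}g^\varepsilon)-\sqrt{\varepsilon}\nabla\cdot(\sqrt{\rho^{\varepsilon,g^\varepsilon}}\,\xi^{K(\varepsilon)})+\tfrac{\varepsilon N_{K(\varepsilon)}}{8}\nabla\cdot((\rho^{\varepsilon,g^\varepsilon})^{-1}\nabla\rho^{\varepsilon,g^\varepsilon}),
\end{equation*}
where $\Pi^{K(\varepsilon)}$ denotes $L^2$-projection onto the span of the noise modes in (\ref{s-39-1}). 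The first step is to prove well-posedness of this equation within the renormalized kinetic framework; the arguments of Theorem \ref{thm-14} combined with the techniques for the deterministic skeleton equation in Section \ref{sec-3} handle this, the main new input being that the control term $\sqrt{\rho^{\varepsilon,g^\varepsilon}}\Pi^{K(\varepsilon)}g^\varepsilon$ is treated exactly as the control $\sqrt{\rho}g$ in Definition \ref{dfn-1}, uniformly in $\varepsilon$ because $\|\Pi^{K(\varepsilon)}g^\varepsilon\|_{L^2}\le N^{1/2}$ almost surely. Then I derive $\varepsilon$-uniform mass preservation and an entropy dissipation estimate in the spirit of Proposition \ref{prp-1}, the only novelties being the Stratonovich correction $\tfrac{\varepsilon N_{K(\varepsilon)}}{8}\nabla\cdot((\rho^{\varepsilon,g^\varepsilon})^{-1}\nabla\rho^{\varepsilon,g^\varepsilon})$ and the It\^o correction $\tfrac{\varepsilon M_{K(\varepsilon)}}{2}\rho^{\varepsilon,g^\varepsilon}\partial_\xi\psi$, both of which are controlled by the assumption $\varepsilon K^{d+2}(\varepsilon)\to 0$ together with the bounds $N_K\lesssim K^d$ and $M_K\lesssim K^{d+2}$.

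These a priori bounds, together with an estimate on $\partial_t\rho^{\varepsilon,g^\varepsilon}$ in $L^1([0,T];H^{-s}(\mathbb{T}^d))$ analogous to (\ref{eq-16-1}), yield via Lemma \ref{lem-3} tightness of the laws of $(\rho^{\varepsilon,g^\varepsilon},g^\varepsilon)$ in $L^1([0,T];L^1(\mathbb{T}^d))\times(S_N,\text{weak})$. By Skorokhod's representation, I may pass to almost sure convergence along a subsequence on a new probability space and identify the limit. Here, the stochastic integral term $\sqrt\varepsilon\nabla\cdot(\sqrt{\rho^{\varepsilon,g^\varepsilon}}\xi^{K(\varepsilon)})$ vanishes in probability thanks to $\varepsilon N_{K(\varepsilon)}\to 0$, and the control term passes to the limit because $\sqrt{\rho^{\varepsilon,g^\varepsilon}}\to\sqrt{\rho}$ strongly in $L^2$ while $\Pi^{K(\varepsilon)}g^\varepsilon\rightharpoonup g$ weakly in $L^2$ (here $K(\varepsilon)\to\infty$ is essential so that $\Pi^{K(\varepsilon)}$ approaches the identity). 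The non-local drift is controlled exactly as in the proof of Proposition \ref{prp-2}, using the LPS estimates (\ref{r-10}), (\ref{t-17-1}) and Lemma \ref{lem-r-1}. By uniqueness of the skeleton equation (Proposition \ref{L1uniq}) the full sequence converges to $\mathcal{G}^0(\int_0^\cdot g\,ds)$.

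\textbf{Main obstacle.} The hard part will be obtaining $\varepsilon$-uniform estimates for the stochastic controlled equation in the absence of standard $L^p$ bounds for $p>1$: the entropy estimate must absorb the Stratonovich correction $\tfrac{\varepsilon N_{K(\varepsilon)}}{8}\nabla\cdot((\rho^{\varepsilon,g^\varepsilon})^{-1}\nabla\rho^{\varepsilon,g^\varepsilon})$ (which, after testing against $\log\rho^{\varepsilon,g^\varepsilon}$, produces a term of order $\varepsilon N_{K(\varepsilon)}\|\nabla\sqrt{\rho^{\varepsilon,g^\varepsilon}}\|_{L^2}^2$ that must be dominated by the good entropy dissipation term for $\varepsilon N_{K(\varepsilon)}$ small enough) while simultaneously handling the non-local term, whose tightness analysis is delicate because $\nabla\cdot(\rho^{\varepsilon,g^\varepsilon}V\ast\rho^{\varepsilon,g^\varepsilon})$ is only in $L^1$ and cannot be cut off by truncation as in \cite{FG}. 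As emphasized in the introduction, this forces a Gr\"onwall-type argument both for passage to the limit and for identification, replacing the vanishing-kernel strategy of \cite{FG}.
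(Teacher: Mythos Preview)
Your overall strategy matches the paper's: verify Condition~A of Theorem~\ref{thm-7}, with (b) coming from Proposition~\ref{prp-3} and (a) from well-posedness plus tightness of the stochastic controlled equation followed by a Skorokhod argument and identification of the limit with the skeleton solution. That framework is correct.

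There is, however, a genuine gap in your tightness step. You propose to bound $\partial_t\rho^{\varepsilon,g^\varepsilon}$ in $L^1([0,T];H^{-s}(\mathbb{T}^d))$ ``analogous to (\ref{eq-16-1})'' and then invoke Lemma~\ref{lem-3}. This fails because the stochastic integral $\sqrt{\varepsilon}\,\nabla\cdot(\sqrt{\rho^{\varepsilon,g^\varepsilon}}\,d\xi^{K(\varepsilon)})$ is not absolutely continuous in time; the equation for $\rho^{\varepsilon,g^\varepsilon}$ simply has no distributional time derivative lying in $L^1([0,T];H^{-s})$. Estimate (\ref{eq-16-1}) is purely deterministic, and Lemma~\ref{lem-3} does not accommodate a martingale component. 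The paper circumvents this (Propositions~\ref{prp-n-3} and~\ref{prp-n-4}) by the $h_\delta$--trick from \cite{FG21}: for each $\delta>0$ one applies It\^o's formula to $h_\delta(\rho)=\psi_\delta(\rho)\rho$, splitting the evolution into a finite-variation part $I^{\mathrm{f.v.}}_t$ bounded in $W^{1,1}([0,T];H^{-s})$ and a martingale part $I^{\mathrm{mart}}_t$ bounded in $W^{\beta,2}([0,T];H^{-s})$ for $\beta<1/2$ (see (\ref{s-12})--(\ref{s-13})). Aubin--Lions--Simon then gives tightness of $\{h_{1/k}(\rho^{\varepsilon,g^\varepsilon})\}$ in $L^1$ for every $k$, and this is upgraded to tightness of $\rho^{\varepsilon,g^\varepsilon}$ itself via the metric $D$ of (\ref{k-14}), which generates the strong $L^1$ topology. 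The cutoff $h_\delta$ is essential precisely because the second derivative $h_\delta''$ is supported away from zero, so the problematic factors $(\rho)^{-1}$ and the lack of $L^p$ bounds for $p>1$ do no damage.

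A secondary point: in the Skorokhod step you will also need to carry the kinetic measures $\bar q^{\varepsilon}$ (and the Brownian motions or the associated martingales) through the representation, not just $(\rho^{\varepsilon,g^\varepsilon},g^\varepsilon)$; the paper identifies the limit in the kinetic formulation with test functions $\phi_M(\xi)\eta(x)$ and then lets $M\to\infty$, using (\ref{k-16})--(\ref{k-17}) to kill the kinetic-measure contribution (see (\ref{k-33})). Without this, you cannot recover the weak formulation of the skeleton equation directly from the kinetic one.
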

Based on Theorem \ref{thm-7}, it suffices to prove {\textbf{Condition A}} holds. Clearly, {\textbf{(b)}} in {\textbf{Condition A}} is a direct consequence of Proposition \ref{prp-3}. Therefore, it is sufficient to verify {\textbf{(a)}} in {\textbf{Condition A}}.

\section{The stochastic controlled equation}\label{sec-5}
As stated in the above section, we need to verify {\textbf{(a)}} in {\textbf{Condition A}} for the proof of large deviation principle.  For any $N<\infty$, let $\{g^{\varepsilon}: 0<\varepsilon<1\}$ $\subset \mathcal{A}_N$.
Then, we have
\begin{eqnarray}\label{k-37}
  \sup_{\varepsilon\in (0,1)}\|g^{\varepsilon}\|^2_{L^{\infty}(\Omega;L^2(\mathbb{T}^d\times [0,T];\mathbb{R}^d) )}\leq N.
\end{eqnarray}
We focus on the following conservative stochastic PDE with stochastic control term, which is called stochastic controlled equation
  \begin{align}\notag
 \partial_t \bar{\rho}^{\varepsilon,K,g^{\varepsilon}}=&\Delta \bar{\rho}^{\varepsilon,K,g^{\varepsilon}}-\nabla\cdot  (\bar{\rho}^{\varepsilon,K,g^{\varepsilon}} V\ast \bar{\rho}^{\varepsilon,K,g^{\varepsilon}})-\sqrt{\varepsilon}\nabla\cdot  (\sqrt{\bar{\rho}^{\varepsilon,K,g^{\varepsilon}}} \xi^K)\\
 \label{s-3}
 & +\frac{\varepsilon N_K}{8}\nabla\cdot ({(\bar{\rho}^{\varepsilon,K,g^{\varepsilon}})}^{-1}\nabla \bar{\rho}^{\varepsilon,K,g^{\varepsilon}})-\nabla\cdot (\sqrt{\bar{\rho}^{\varepsilon,K,g^{\varepsilon}}} P_K g^{\varepsilon}),
 \end{align}
with $ \bar{\rho}^{\varepsilon,K,g^{\varepsilon}}(0)=\rho_0$. Here, $K=K(\varepsilon)\rightarrow \infty$, as $\varepsilon\rightarrow 0$ and $P_Kg^{\varepsilon}$ denotes the Fourier projection of the random control $g^{\varepsilon}$ onto the span $\{\sin (k\cdot x), \cos (k\cdot x)\}_{\{|k|\leq K\}}$.
\textbf{For simplicity, we denote by $\bar{\rho}^{\varepsilon}:=\bar{\rho}^{\varepsilon,K(\varepsilon),g^{\varepsilon}}$.}


\subsection{Well-posedness of the stochastic controlled equation}
Firstly, we introduce the following definition based on the solutions to (\ref{s-2}) and (\ref{skeleton}).
\begin{definition}\label{dfn-n-1}
Let $\rho_0\in  {\rm{Ent}} (\mathbb{T}^d)$, $\varepsilon\in (0,1)$ and $K\in \mathbb{N}$. A nonnegative function $\bar{\rho}^{\varepsilon}\in L^{1}(\Omega\times[0,T];L^1(\mathbb{T}^d))$ is called a stochastic renormalized kinetic solution of (\ref{s-3}) with initial data $\rho_0$ if $\rho$ is almost surely continuous $L^1(\mathbb{T}^d)-$valued $\mathcal{F}_t-$predicatable function and satisfies the following properties.
\begin{enumerate}
  \item Preservation of mass: almost surely, for almost every $t\in [0,T]$,
  \begin{align}\label{s-28}
  \|\bar{\rho}^{\varepsilon}(\cdot,t)\|_{L^1(\mathbb{T}^d)}=\|\rho_0\|_{L^1(\mathbb{T}^d)}.
  \end{align}
  \item Regularity:
  $\nabla\sqrt{\bar{\rho}^{\varepsilon}}\in L^2(\Omega\times[0,T];L^2(\mathbb{T}^d;\mathbb{R}^d))$, $V\ast\bar{\rho}^{\varepsilon}\in L^2(\Omega\times[0,T];L^2(\mathbb{T}^d))$.

 Furthermore, there exists a nonnegative kinetic measure $\bar{q}^{\varepsilon}$ satisfying the following properties.
  \item Regularity: for any nonnegative $\psi\in C^{\infty}_c(\mathbb{T}^d\times (0,\infty))$, we have $\bar{q}^{\varepsilon}(\psi)\geq \bar{o}^{\varepsilon}(\psi)$, where
  $\bar{o}^{\varepsilon}: \Omega\rightarrow \mathcal{M}^+(\mathbb{T}^d\times (0,\infty)\times [0,T])$ is defined by
\begin{align*}
  \bar{o}^{\varepsilon}(\psi):=4\int^T_0\int_{\mathbb{T}^d}\int_{\mathbb{R}}\psi(x,\xi)
\delta_0(\xi-\bar{\rho}^{\varepsilon}) \xi |\nabla\sqrt{\bar{\rho}^{\varepsilon}}|^2d\xi dxdt.
\end{align*}
\item Vanishing at infinity: we have
     \begin{align}\label{qq-r-8}
       \liminf_{M\rightarrow \infty}\mathbb{E}\Big[\bar{q}^{\varepsilon}(\mathbb{T}^d\times [0,T]\times [M,M+1])\Big]=0.
     \end{align}
\item The equation: the pair $(\bar{\chi}^{\varepsilon}(x,\xi,t)=I_{0<\xi<\bar{\rho}^{\varepsilon}(x,t)},\bar{q}^{\varepsilon})$ satisfies, for every $\psi\in C^{\infty}_c(\mathbb{T}^d\times (0,\infty))$, for almost every $t\in [0,T]$, $\mathbb{P}-$a.s.,
     \begin{align}\notag
      &\int_{\mathbb{R}}\int_{\mathbb{T}^d}\bar{\chi}^{\varepsilon}(x,\xi,t)\psi(x,\xi)dxd\xi
      =\int_{\mathbb{R}}\int_{\mathbb{T}^d}\bar{\chi}(\rho_0(x),\xi)\psi(x,\xi)dxd\xi
      +\int^t_0\int_{\mathbb{R}}\int_{\mathbb{T}^d}\bar{\chi}^{\varepsilon}\Delta_x\psi dxd\xi ds\\ \notag
      -&\int^t_0\int_{\mathbb{T}^d}\psi(x,\bar{\rho}^{\varepsilon})\nabla\cdot(\bar{\rho}^{\varepsilon} V\ast\bar{\rho}^{\varepsilon})dxds
      +2\int^t_0\int_{\mathbb{T}^d}\bar{\rho}^{\varepsilon}\nabla \sqrt{\bar{\rho}^{\varepsilon}}\cdot P_Kg^{\varepsilon} (\partial_{\xi}\psi)(x,\bar{\rho}^{\varepsilon})dxdr\\ \notag
      +& \int^t_0\int_{\mathbb{T}^d}\sqrt{\bar{\rho}^{\varepsilon}}P_Kg^{\varepsilon}(x,r)\cdot (\nabla  \psi)(x,\bar{\rho}^{\varepsilon})dxdr
      -\frac{\varepsilon N_K}{8}\int^t_0\int_{\mathbb{T}^d}(\bar{\rho}^{\varepsilon})^{-1}\nabla \bar{\rho}^{\varepsilon}
\cdot \nabla \psi(x,\bar{\rho}^{\varepsilon})dxds\\
\label{s-5}
-&\int^t_0\int_{\mathbb{R}}\int_{\mathbb{T}^d}\partial_{\xi}\psi(x,\xi)d\bar{q}^{\varepsilon}+\frac{\varepsilon M_K}{2}\int^t_0\int_{\mathbb{T}^d} \bar{\rho}^{\varepsilon} \partial_{\xi}\psi(x,\bar{\rho}^{\varepsilon})dxds
-\sqrt{\varepsilon}\int^t_0\int_{\mathbb{T}^d}\psi(x,\bar{\rho}^{\varepsilon})\nabla\cdot(\sqrt{\bar{\rho}^{\varepsilon}}d\xi^{K}).
\end{align}
\end{enumerate}
\end{definition}

Applying similar method as in the proof of \cite[Proposition 4.6]{FG24} and using (\ref{ent-1}), we get the kinetic measure also decays at zero and satisfies the other version of vanishing at infinity. That is,
\begin{align}\label{k-28}
		&\lim_{M\rightarrow \infty}M\mathbb{E}\Big[\bar{q}^{\varepsilon}(\mathbb{T}^d\times [0,T]\times [1/M,2/M])\Big]=0,\\
\label{k-29}
&\lim_{M\rightarrow \infty}M^{-1}\mathbb{E}\Big[\bar{q}^{\varepsilon}(\mathbb{T}^d\times [0,T]\times [M,2M])\Big]=0.
	\end{align}


The well-posedness of the stochastic controlled equation (\ref{s-3}) reads as follows.
\begin{theorem}\label{thm-n-1}
Let $\rho_0\in {\rm{Ent}} (\mathbb{T}^d)$. Assume that $V$  satisfies Assumptions (A1) and (A2), then (\ref{s-3}) admits a unique renormalized kinetic solution in the sense of Definition \ref{dfn-n-1} which is a probabilistically strong solution.
\end{theorem}
\begin{proof}
The proof of Theorem \ref{thm-n-1} is divided into several parts. For the uniqueness of solutions to (\ref{s-3}), it can be easily derived by
combining techniques used in Proposition \ref{L1uniq} and in \cite[Theorem 4.7]{FG24}, thus we omit it. To show the existence of renormalized kinetic solutions to (\ref{s-3}), we firstly introduce a regularized equation. Recall from Lemma \ref{lem-2} that there exists a sequence of smooth functions $\{\sigma^{\frac{1}{2},n}\}_{n\geq 1}$ approximating the square root function. We consider the following approximation equation of (\ref{s-3}),
 \begin{align}\notag
 d\rho^{n,\varepsilon}&=\Delta \rho^{n,\varepsilon} dt-\nabla\cdot (\sigma^{ n}(\rho^{n,\varepsilon}) V\ast \rho^{n,\varepsilon})dt-\sqrt{\varepsilon}\nabla\cdot (\sigma^{\frac{1}{2}, n}(\rho^{n,\varepsilon})d\xi^K)\\
 \label{s-8}
     &\quad\quad \quad\quad -\nabla\cdot (\sigma^{\frac{1}{2}, n}(\rho^{n,\varepsilon})P_K g^{\varepsilon})dt+\frac{N_K\varepsilon}{2}\nabla\cdot (|(\sigma^{\frac{1}{2}, n})'(\rho^{n,\varepsilon})|^2\nabla \rho^{n,\varepsilon})dt,
 \end{align}
 with $\rho^{n,\varepsilon}(0)=\rho_0\in L^{\infty}(\mathbb{T}^d)$. Employing
the same argument as used in \cite[Proposition 5.20, Proposition 5.26]{FG24} and \cite[Theorem 5.7, Proposition 5.14]{WWZ22}, we get the existence of probabilistically weak solutions to (\ref{s-8}) and tightness of $(\rho^{n,\varepsilon})_{n\geq1}$ on $L^1([0,T];L^1(\mathbb{T}^d))$. Moreover, the weak solution $\rho^{n,\varepsilon} $ of (\ref{s-8}) is a stochastic renormalized kinetic solution. The final step is to pass to the limits of (\ref{s-3}), which can be achieved in a similar way to \cite[Theorem 6.2]{WWZ22}.

\end{proof}

Based on Theorem \ref{thm-n-1}, Let $\bar{\rho}^{\varepsilon}$ be the unique solution of (\ref{s-3}) with initial data $\rho_0$ and control $g^{\varepsilon}$, and let the map $\mathcal{G}^{\varepsilon,K}$ be defined by (\ref{s-6}). By Girsanov theorem, it follows that
 \begin{align}
  \bar{\rho}^{\varepsilon}=\mathcal{G}^{\varepsilon,K}\left( (B^k,W^k)_{|k|\leq K}+\frac{1}{ \sqrt{\varepsilon}}\Big(\int^{\cdot}_0g^{\varepsilon}_k(s)ds,\int^{\cdot}_0(g^{\varepsilon})^{\sharp}_k(s)ds\Big)_{|k|\leq K}\right).
\end{align}

\subsection{Tightness of the laws of solutions to stochastic controlled equation}\label{sec-12}
We adopt the method from \cite{FG24} and \cite{WWZ22}. As pointed out in \cite[Section 5.3]{FG24}, a stable $W^{\beta,1}([0,T];H^{-s}(\mathbb{T}^{d}))$-estimate is not available for the approximate solutions $\rho^{n,\varepsilon}$. Instead, for every  $\delta\in(0,1)$, we have a stable $W^{\beta,1}([0,T];H^{-s}(\mathbb{T}^{d}))$-estimate
for
 \begin{align*}
 h_{\delta}(\rho^{n,\varepsilon}):=\psi_{\delta}(\rho^{n,\varepsilon}) \rho^{n,\varepsilon}.
 \end{align*}
Here,
$\psi_{\delta}\in C^{\infty}([0,\infty))$ is a smooth nondecreasing function satisfying that $0\le\psi_{\delta}\le1$, $\psi_{\delta}(\xi)=1$ if $\xi\ge\delta$, $\psi_{\delta}(\xi)=0$ if $\xi\le\delta/2$. Moreover, $|\psi_{\delta}'(\xi)|\le c/\delta$ for some $c \in(0, \infty)$ independent of $\delta$.


We also need a new metric on  $L^1([0,T];L^1(\mathbb{T}^d))$ defined by $h_{\delta}$.
For every $\delta\in(0,1)$, let $D: L^{1}\left([0,T];L^{1}\left(\mathbb{T}^{d}\right)\right)\times L^{1}\left([0,T];L^{1}\left(\mathbb{T}^{d}\right)\right)\to[0,\infty)$ be defined by
	\begin{equation} \label{k-14} D(f,g)=\sum_{k=1}^{\infty}2^{-k}\left(\frac{\|h_{1/k}(f)-h_{1/k}(g)\|_{L^{1}\left([0,T];L^{1}\left(\mathbb{T}^{d}\right)\right)}}{1+\|h_{1/k}(f)-h_{1/k}(g)\|_{L^{1}\left([0,T];L^{1}\left(\mathbb{T}^{d}\right)\right)}}\right)\notag.
	\end{equation}
It has been shown by \cite[Lemma 5.24]{FG24} that the function $D$ is a metric on $L^{1}\left([0,T];L^{1}\left(\mathbb{T}^{d}\right)\right)$ and the corresponding metric topology is proved to be equal to the usual strong norm topology on $L^1([0,T];L^1(\mathbb{T}^d))$.

According to the proof of Theorem \ref{thm-n-1}, for the kinetic solution $\bar{\rho}^{\varepsilon}$ of (\ref{s-3}), there exists a sequence of
 kinetic solutions $\{\rho^{n,\varepsilon}\}_{n\geq 1}$ of (\ref{s-8}) defined
on the probability space $(\Omega,  \mathcal{F}, \mathbb{P})$ such that $\mathbb{P}-$a.s., as $n\rightarrow \infty$,
 $\rho^{n,\varepsilon}\rightarrow\bar{\rho}^{\varepsilon}$ strongly in $L^1([0,T];L^1(\mathbb{T}^d))$ and $\nabla \sqrt{\rho^{n,\varepsilon}}\rightharpoonup \nabla\sqrt{\bar{\rho}^{\varepsilon}}$ weakly in $L^2([0,T];L^2(\mathbb{T}^d))$.
Then the weak lower semicontinuity of the Sobolev norm implies the following result.
\begin{proposition}\label{prp-n-1}
  Let $\rho_0\in {\rm{Ent}}(\mathbb{T}^d)$, $\varepsilon\in (0,1)$, $K\in \mathbb{N}$ satisfying $\varepsilon K^{d+2}(\varepsilon)\leq 1$. Assume that $V$ satisfies Assumption (A1), then the solution $\bar{\rho}^{\varepsilon}$ of (\ref{s-3}) satisfies that
   almost surely for every $t\in [0,T]$, $\|\bar{\rho}^{\varepsilon}(\cdot,t)\|_{L^1(\mathbb{T}^d)}=\|\rho_0\|_{L^1(\mathbb{T}^d)}$ and there exists $C>0$ independent of $\varepsilon$ such that
 \begin{align}\notag
	&\mathbb{E}\Big[\sup_{t\in [0,T]}\int_{\mathbb{T}^{d}} \Psi(\bar{\rho}^{\varepsilon}(x, t))dx\Big]+\mathbb{E}\int_{0}^{T} \int_{\mathbb{T}^{d}} \left|\nabla \sqrt{\bar{\rho}^{\varepsilon}}\right|^{2}dxdt\\
\label{k-35}
\leq &\int_{\mathbb{T}^{d}} \Psi(\rho_0)dx+ C(T,c_1,\|V\|_{L^{p^*}([0,T];L^p(\mathbb{T}^d))}, \|\rho_0\|_{L^1(\mathbb{T}^d)},N).
\end{align}
\end{proposition}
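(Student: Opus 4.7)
The plan is to obtain Proposition \ref{prp-n-1} by passing to the limit $n\to\infty$ in the uniform estimates already proved for the approximating equation (\ref{s-8}), exploiting the convergences that were established in the construction of $\bar{\rho}^{\varepsilon}$. From the discussion preceding the statement, we have a sequence $\{\rho^{n,\varepsilon}\}_{n\geq 1}$ of weak solutions of (\ref{s-8}), defined on $(\Omega,\mathcal{F},\mathbb{P})$, such that $\mathbb{P}$-a.s., $\rho^{n,\varepsilon}\to \bar{\rho}^{\varepsilon}$ strongly in $L^1([0,T];L^1(\mathbb{T}^d))$ and $\nabla\sqrt{\rho^{n,\varepsilon}}\rightharpoonup \nabla\sqrt{\bar{\rho}^{\varepsilon}}$ weakly in $L^2([0,T];L^2(\mathbb{T}^d;\mathbb{R}^d))$. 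In particular, after extracting a further subsequence, $\rho^{n,\varepsilon}\to \bar{\rho}^{\varepsilon}$ almost everywhere on $\Omega\times [0,T]\times\mathbb{T}^d$.

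First, to obtain mass conservation, I would integrate (\ref{s-27}) in time and space and use the strong $L^1$ convergence: for every Borel set $A\subseteq[0,T]$ of positive measure,
\begin{equation*}
\int_A \|\bar{\rho}^{\varepsilon}(\cdot,t)\|_{L^1(\mathbb{T}^d)}\,dt=\lim_{n\to\infty}\int_A \|\rho^{n,\varepsilon}(\cdot,t)\|_{L^1(\mathbb{T}^d)}\,dt=|A|\,\|\rho_0\|_{L^1(\mathbb{T}^d)},
\end{equation*}
which forces $\|\bar{\rho}^{\varepsilon}(\cdot,t)\|_{L^1(\mathbb{T}^d)}=\|\rho_0\|_{L^1(\mathbb{T}^d)}$ for almost every $t\in[0,T]$. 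Together with the almost sure continuity in $L^1$ from Definition \ref{dfn-n-1}, this upgrades to all $t\in[0,T]$.

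Next, for the entropy dissipation bound, I would argue separately for the two terms on the left-hand side of (\ref{k-35}). For the gradient term, the weak convergence $\nabla\sqrt{\rho^{n,\varepsilon}}\rightharpoonup \nabla\sqrt{\bar{\rho}^{\varepsilon}}$ in $L^2(\Omega\times[0,T]\times\mathbb{T}^d;\mathbb{R}^d)$ (which one gets by testing the corresponding uniform $L^2$ bound from (\ref{s-9}) with Fatou on $\Omega$ and standard weak compactness) combined with the weak lower semicontinuity of the $L^2$-norm gives
\begin{equation*}
\mathbb{E}\int_0^T\!\!\int_{\mathbb{T}^d}|\nabla\sqrt{\bar{\rho}^{\varepsilon}}|^2\,dx\,dt\leq \liminf_{n\to\infty}\mathbb{E}\int_0^T\!\!\int_{\mathbb{T}^d}|\nabla\sqrt{\rho^{n,\varepsilon}}|^2\,dx\,dt.
\end{equation*}
For the entropy term, since $\Psi(\zeta)=\zeta\log\zeta-\zeta$ is bounded below by $-1$ and is convex in $\zeta$, Fatou's lemma applied along the a.s. pointwise subsequence yields
\begin{equation*}
\mathbb{E}\Big[\sup_{t\in[0,T]}\int_{\mathbb{T}^d}\Psi(\bar{\rho}^{\varepsilon}(x,t))\,dx\Big]\leq \liminf_{n\to\infty}\mathbb{E}\Big[\sup_{t\in[0,T]}\int_{\mathbb{T}^d}\Psi(\rho^{n,\varepsilon}(x,t))\,dx\Big],
\end{equation*}
after absorbing the harmless constant from $\Psi\geq -1$ using the uniform $L^1(\mathbb{T}^d)$-mass bound (\ref{s-27}). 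Summing these two inequalities and invoking the uniform bound (\ref{s-9}) yields exactly (\ref{k-35}), with the constant $C$ inherited directly from Proposition \ref{prp-n-2} and therefore independent of $\varepsilon$.

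The only delicate point I anticipate is making sure that Fatou's lemma can be applied to the $\sup_{t\in[0,T]}\int \Psi(\rho^{n,\varepsilon})\,dx$ term; the almost everywhere convergence $\rho^{n,\varepsilon}\to\bar{\rho}^{\varepsilon}$ gives pointwise convergence only after passing to a subsequence and only for almost every $(\omega,t,x)$, not at every fixed $t$. However, one can circumvent this by writing the supremum as a countable supremum over a dense set of $t$'s (permitted since $\Psi(\bar{\rho}^{\varepsilon})\in L^1_{\mathrm{loc}}$ in time) and applying Fatou at each such $t$, or alternatively by rewriting both sides using the time-averaged entropy, which is enough to yield the stated estimate after taking $\sup$ thanks to the entropy identity satisfied by kinetic solutions. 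This is precisely the step where the structure of Definition \ref{dfn-n-1}, specifically the regularity of $\bar{\rho}^{\varepsilon}$ as a continuous $L^1$-valued process, becomes essential.
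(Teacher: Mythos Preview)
Your proposal is correct and follows essentially the same approach as the paper: the paper derives Proposition~\ref{prp-n-1} simply by passing the uniform approximating bound (\ref{s-9}) to the limit along the convergences $\rho^{n,\varepsilon}\to\bar\rho^{\varepsilon}$ established in the construction, using weak lower semicontinuity of the $L^2$-norm for the gradient term and mass conservation from the definition. Your write-up actually gives more detail than the paper, which treats the result in one line; your identification of the $\sup_t$ issue is apt, though the cleanest resolution is to use lower semicontinuity of $f\mapsto\int\Psi(f)$ with respect to strong $L^1(\mathbb{T}^d)$-convergence (valid since $\Psi$ is convex and bounded below) at a.e.\ $t$ along a subsequence, take the essential supremum in $t$, and then apply Fatou on $\Omega$---rather than the countable-dense-$t$ device you suggest, which does not directly give convergence at fixed times.
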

Further,  by Proposition \ref{prp-n-1}, we can show the tightness of solutions to stochastic controlled equation.
\begin{proposition}\label{prp-n-4}
Let $\rho_0\in {\rm{Ent}}(\mathbb{T}^d)$, $\varepsilon\in (0,1), K\in \mathbb{N}$ satisfying $\varepsilon K(\varepsilon)^{d+2}\leq 1$. Assume that $V$ satisfies Assumption (A1),
then the laws of the solutions $\{\bar{\rho}^{\varepsilon}\}_{\varepsilon\in (0,1)}$ of (\ref{s-3}) are tight on $L^1([0,T];L^1(\mathbb{T}^d))$.
\end{proposition}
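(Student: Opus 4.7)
The plan is to mirror the strategy of Proposition \ref{prp-n-3} used for the approximating sequence $\{\rho^{n,\varepsilon}\}_{n\geq 1}$, but now applied to the family $\{\bar{\rho}^{\varepsilon}\}_{\varepsilon\in(0,1)}$ with uniformity in $\varepsilon$ rather than in $n$. The three uniform estimates displayed just before the statement (the $W^{\beta,1}([0,T];H^{-s}(\mathbb{T}^d))$ bound for $h_{1/k}(\bar{\rho}^{\varepsilon})$, the $L^1([0,T];W^{1,1}(\mathbb{T}^d))$ bound, and the $L^{\infty}([0,T];L^1(\mathbb{T}^d))$ bound) are the key inputs, all obtained by passing $n\to\infty$ in the corresponding estimates for $\rho^{n,\varepsilon}$ using the entropy dissipation Proposition \ref{prp-n-1} and the structure of the truncation $h_{\delta}$.

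First, I would fix $k\in\mathbb{N}$ and observe that the compact embedding $W^{1,1}(\mathbb{T}^d)\hookrightarrow L^1(\mathbb{T}^d)$ together with the continuous embedding $L^1(\mathbb{T}^d)\hookrightarrow H^{-s}(\mathbb{T}^d)$ for $s>\frac{d}{2}+1$ sets the stage for the Aubin-Lions-Simon lemma (Corollary 5 in \cite{Simon}). Combining the three uniform-in-$\varepsilon$ bounds recalled above, Markov's inequality produces, for every $\eta>0$, a set which is bounded in $L^1([0,T];W^{1,1}(\mathbb{T}^d))\cap W^{\beta,1}([0,T];H^{-s}(\mathbb{T}^d))$ (with $\beta\in(0,1/2)$) and carries probability at least $1-\eta$ uniformly in $\varepsilon\in(0,1)$. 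Aubin-Lions-Simon then gives that this set is relatively compact in $L^1([0,T];L^1(\mathbb{T}^d))$, which establishes tightness of $\{h_{1/k}(\bar{\rho}^{\varepsilon})\}_{\varepsilon\in(0,1)}$ in the strong topology of $L^1([0,T];L^1(\mathbb{T}^d))$ for each fixed $k$.

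Next, I would transfer tightness from the truncations to $\bar{\rho}^{\varepsilon}$ itself. Recall that Lemma 5.24 of \cite{FG21} shows that the metric $D$ defined in (\ref{k-14}) is equivalent to the strong $L^1([0,T];L^1(\mathbb{T}^d))$ norm topology. Since tightness of each $\{h_{1/k}(\bar{\rho}^{\varepsilon})\}_{\varepsilon\in(0,1)}$ in $L^1([0,T];L^1(\mathbb{T}^d))$ translates, via the preservation of mass $\|\bar{\rho}^{\varepsilon}(t)\|_{L^1}=\|\rho_0\|_{L^1}$ and the pointwise bound $|h_{1/k}(\xi)-\xi|\le \mathbf{1}_{\{\xi\le 1/k\}}\xi$, into an approximation of $\bar{\rho}^{\varepsilon}$ by $h_{1/k}(\bar{\rho}^{\varepsilon})$ that is uniform in $\varepsilon$ as $k\to\infty$, one concludes tightness of $\{\bar{\rho}^{\varepsilon}\}_{\varepsilon\in(0,1)}$ in $(L^1([0,T];L^1(\mathbb{T}^d)),D)$, hence in the strong norm topology. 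This is exactly the diagonal argument invoked in the last line of Proposition \ref{prp-n-3} through Proposition 5.26 of \cite{FG21}, and I would simply cite it here since only $\varepsilon$ replaces $n$ in the indexing, while all the required estimates are now uniform in $\varepsilon$ by Proposition \ref{prp-n-1} and the displays preceding the statement. No new analytic difficulty arises at this step; the only delicate point was already taken care of in establishing the uniform entropy dissipation bound under the scaling $\varepsilon K^{d+2}(\varepsilon)\le 1$, which controls the Stra-It\^{o} correction generated by the ultra-violet cutoff noise.
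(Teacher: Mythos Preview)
Your proposal is correct and follows essentially the same approach as the paper. The paper's proof is a one-line reference to ``the same method as in Proposition \ref{prp-n-3}'' applied with the three uniform-in-$\varepsilon$ estimates displayed immediately before the statement; your write-up simply unpacks that reference by spelling out the Aubin--Lions--Simon step for each $h_{1/k}(\bar{\rho}^{\varepsilon})$ and the passage from the truncations to $\bar{\rho}^{\varepsilon}$ via the metric $D$ and Proposition~5.26 of \cite{FG21}, exactly as in the proof of Proposition~\ref{prp-n-3}.
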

\begin{proof}
Arguing similarly as in \cite[Proposition 5.14]{FG24} and \cite[Proposition 5.13]{WWZ22}, by (\ref{k-37}), for every $\beta\in(0,1/2)$ and $s>\frac{d}{2}+1$, there exists a constant $C$ independent of $n$ and $\varepsilon$ such that
	\begin{align}\label{wr-1}
	\mathbb{E}\left[\| h_{\delta}(\bar{\rho}^{\varepsilon})\|_{W^{\beta,1}([0,T];H^{-s}(\mathbb{T}^{d}))}\right]\le C(\delta,T,\|\rho_0\|_{L^1(\mathbb{T}^{d})},\|V\|_{L^{p^{*}}([0,T];L^{p}(\mathbb{T}^d))})(1+N),
	\end{align}
\begin{align}\label{wr-2}
  \mathbb{E}\Big[\|h_{\delta}(\bar{\rho}^{\varepsilon})\|_{L^1([0,T];W^{1,1}(\mathbb{T}^d))}\Big]
  \leq & C(T,c_1,\|V\|_{L^{p^*}([0,T];L^p(\mathbb{T}^d))},\rho_0,N),
\end{align}
and
\begin{align}\label{wr-3}
  \mathbb{E}\Big[\|h_{\delta}(\bar{\rho}^{\varepsilon})\|_{L^{\infty}([0,T];L^1(\mathbb{T}^d))}\Big]
  \leq \mathbb{E}\|\bar{\rho}^{\varepsilon}\|_{L^{\infty}([0,T];L^1(\mathbb{T}^d))}=\|\rho_0\|_{L^1(\mathbb{T}^d)}.
\end{align}
With (\ref{wr-1})-(\ref{wr-3}) in hand, a diagonal argument (see \cite[Proposition 5.26]{FG24}) implies that the laws of $\{\bar{\rho}^{\varepsilon}\}_{\varepsilon\in (0,1)}$ are tight on $L^{1}\left([0,T]; L^{1}\left(\mathbb{T}^{d}\right)\right)$ in the strong topology.
\end{proof}

\section{Proof of large deviation principles}\label{sec-6}
As discussed at the end of Section \ref{sec-5}, to obtain the LDP,
 it remains to establish {\textbf{(a)}} in {\textbf{Condition A}}.
Let $\bar{\rho}^{\varepsilon,K,g^{\varepsilon}}$ be the unique solution of stochastic controlled equation (\ref{s-3}) with initial data $\rho_0$ and control $\{g^{\varepsilon}: \varepsilon\in (0,1)\}\subseteq\mathcal{A}_N$ in the sense of Definition \ref{dfn-n-1}.
Denote  by  $\bar{q}^{\varepsilon,g^{\varepsilon}}$ the corresponding kinetic measure of $\bar{\rho}^{\varepsilon,K,g^{\varepsilon}}$. Using the same argument as in \cite[Theorem 6.2, step 2]{WWZ22}, we can show the result below.
\begin{lemma}
Let $\rho_0\in L^{\infty}(\mathbb{T}^d)$ be nonnegative, $\varepsilon\in (0,1)$ and $K\in \mathbb{N}$ satisfying $\varepsilon K^{d+2}(\varepsilon)\leq 1$. Let $\bar{\rho}^{\varepsilon,K(\varepsilon),g^{\varepsilon}}$ be the kinetic solution of (\ref{s-3}) with initial data $\rho_0$.
  Assume that $V$ satisfies Assumption (A1), then for every $M>0$ the kinetic measure $\bar{q}^{\varepsilon,g^{\varepsilon}}$ with respect to $\bar{\rho}^{\varepsilon,K(\varepsilon),g^{\varepsilon}}$ satisfies that
 \begin{align}\label{k-32}
  \sup_{0<\varepsilon<1} \mathbb{E}|\bar{q}^{\varepsilon,g^{\varepsilon}}([0,T]\times \mathbb{T}^d\times [0,M])|^2
  \leq C\Big(M, \|\rho_0\|_{L^1(\mathbb{T}^{d})}, \int_{\mathbb{T}^{d}} \Psi(\rho_0)dx, c_1, V, N \Big).
  \end{align}
\end{lemma}

Now we are ready to verify {\textbf{(a)}} in {\textbf{Condition A}}.
\begin{theorem}\label{thm-3}
Assume that $V$ satisfies Assumptions (A1) and (A2). Let $T>0$, $ \rho_0\in\rm{Ent}(\mathbb{T}^d)$,   $\varepsilon\in (0,1), K\in \mathbb{N}$ satisfies $\varepsilon K(\varepsilon)^{d+2}\rightarrow 0$ and $K(\varepsilon)\rightarrow \infty$ as $\varepsilon\rightarrow 0$. For every $N<\infty$, let $\{g^{\varepsilon}: 0<\varepsilon<1\}$ $\subset \mathcal{A}_N$. If $g^{\varepsilon}\rightarrow g$ in distribution weakly in $L^2([0,T]\times \mathbb{T}^d;\mathbb{R}^d)$, then
the solution $\bar{\rho}^{\varepsilon,K(\varepsilon),g^{\varepsilon}}$ of (\ref{s-3}) with initial data $\rho_0$ satisfy, as $\varepsilon\rightarrow 0$,
\begin{align*}
  \bar{\rho}^{\varepsilon,K(\varepsilon),g^{\varepsilon}}\rightarrow \rho^g\quad {\rm{in\ distribution\ on\ }} L^1([0,T];L^1(\mathbb{T}^d)),
\end{align*}
where $\rho^g$ is the unique solution of the skeleton equation (\ref{s-54}) with initial data $\rho_0$.
\end{theorem}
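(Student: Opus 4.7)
\medskip

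\noindent\textbf{Proof proposal for Theorem \ref{thm-3}.}

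The plan is to implement the weak--convergence criterion via tightness, Skorokhod representation, and identification of the limit through the uniqueness of the skeleton equation. First I would package the relevant random quantities as $(\bar{\rho}^{\varepsilon,K(\varepsilon),g^{\varepsilon}}, g^{\varepsilon}, \xi^{K(\varepsilon)})$, valued in $L^1([0,T];L^1(\mathbb{T}^d))\times S_N\times C([0,T];\mathbb{R}^{\infty})$, where $S_N$ carries the weak $L^2$-topology. Proposition \ref{prp-n-4} gives tightness of the first marginal, $S_N$ is metrizable and weakly compact, and the Brownian driver is tight by construction. Thus the joint laws are tight, and by a Jakubowski--Skorokhod representation I obtain a new probability space on which a subsequence (not relabeled) converges almost surely to some triple $(\tilde{\rho},\tilde{g},\tilde{\xi})$, with $\tilde{g}\stackrel{d}{=}g$.

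Next, I would pass to the limit term-by-term in the kinetic identity \eqref{s-5} to verify that $\tilde{\rho}$ is a weak (equivalently, renormalized kinetic) solution of the skeleton equation \eqref{s-54} driven by $\tilde{g}$. The easy terms proceed as follows: the Laplacian and the initial datum are linear in $\bar\chi^{\varepsilon}$ and pass by the strong $L^1$-convergence; the Stratonovich and quadratic corrections scale like $\varepsilon N_K \lesssim \varepsilon K^d$ and $\varepsilon M_K \lesssim \varepsilon K^{d+2}$, both of which vanish under the scaling hypothesis; the stochastic integral vanishes in $L^2$ by the It\^o isometry and the entropy bound \eqref{k-35}, giving a bound of order $\varepsilon N_K$; the nonlocal kernel term $\nabla\cdot(\bar{\rho}^{\varepsilon}V\ast\bar{\rho}^{\varepsilon})$ is handled exactly as in Proposition \ref{prp-2}, combining the strong $L^1$-convergence of $\bar{\rho}^{\varepsilon}$ with the uniform $L^2_tH^1_x$-bound on $\sqrt{\bar{\rho}^{\varepsilon}}$ to pass the limit in $\sqrt{\bar{\rho}^{\varepsilon}}\,V\ast\bar{\rho}^{\varepsilon}$ via the Gagliardo--Nirenberg estimate \eqref{r-10}.

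The main obstacle is the pair of terms that couple $\bar{\rho}^{\varepsilon}$ with $P_{K(\varepsilon)}g^{\varepsilon}$, namely $\int\sqrt{\bar{\rho}^{\varepsilon}}\,P_K g^{\varepsilon}\cdot(\nabla\psi)(x,\bar{\rho}^{\varepsilon})\,dx$ and $\int\bar{\rho}^{\varepsilon}\nabla\sqrt{\bar{\rho}^{\varepsilon}}\cdot P_K g^{\varepsilon}\,(\partial_\xi\psi)(x,\bar{\rho}^{\varepsilon})\,dx$: in each, $g^{\varepsilon}$ converges only weakly, so one cannot multiply directly by the weakly-convergent $\nabla\sqrt{\bar{\rho}^{\varepsilon}}$. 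The point is that $P_{K(\varepsilon)}\to\mathrm{Id}$ strongly on $L^2$ and $g^{\varepsilon}\rightharpoonup\tilde{g}$, so $P_{K(\varepsilon)}g^{\varepsilon}\rightharpoonup\tilde{g}$ in $L^2([0,T]\times\mathbb{T}^d;\mathbb{R}^d)$. Meanwhile, a Lions-type lemma applied to the stable bounds of Section \ref{section-1} (entropy estimate plus the tightness estimates used in Proposition \ref{prp-n-4}) upgrades a subsequence of $\sqrt{\bar{\rho}^{\varepsilon}}$ to strong $L^2_{t,x}$ convergence; combined with the test-function dependence in $\psi(x,\bar{\rho}^{\varepsilon})$ and the dominated convergence theorem, the first product is a strong-times-weak pairing that passes to the limit. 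For the second product I would first integrate by parts with respect to $\xi$ to rewrite the term using $\nabla\cdot(P_K g^{\varepsilon})\sigma(\bar{\rho}^{\varepsilon})$-type expressions that avoid pairing $\nabla\sqrt{\bar{\rho}^{\varepsilon}}$ directly with $g^{\varepsilon}$, or alternatively work at the level of the weak formulation \eqref{eqq-19} where only $\sqrt{\bar{\rho}^{\varepsilon}}\cdot g^{\varepsilon}$ appears. Simultaneously, the limit kinetic measure $\tilde{q}$ is extracted as a weak-$\ast$ limit of $\bar{q}^{\varepsilon,g^{\varepsilon}}$ on $\mathbb{T}^d\times[0,T]\times[0,M]$ for each $M$ using the uniform bound \eqref{k-32}, and the vanishing-at-infinity property \eqref{qq-r-8} for $\tilde{q}$ follows from a diagonal argument together with Lemma \ref{lem-4}.

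Finally, once $\tilde{\rho}$ is shown to be a weak solution to \eqref{s-54} with control $\tilde{g}$, Propositions \ref{L1uniq} and \ref{prp-2} (via the equivalence Theorem \ref{thm-1}) give $\tilde{\rho}=\rho^{\tilde{g}}=\mathcal{G}^0\big((\int_0^{\cdot}\tilde{g}_k,\int_0^{\cdot}\tilde{g}_k^{\sharp})_{k}\big)$ almost surely, and since $\tilde{g}\stackrel{d}{=}g$ and $\mathcal{G}^0$ is measurable, the law of $\tilde{\rho}$ coincides with that of $\rho^g$. As the accumulation point is unique, the whole sequence converges in distribution to $\rho^g$ on $L^1([0,T];L^1(\mathbb{T}^d))$, which verifies condition \textbf{(a)} of Theorem \ref{thm-7} and completes the proof of the large deviation principle in Theorem \ref{thm-2}.
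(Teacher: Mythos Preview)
Your overall architecture---tightness, Skorokhod representation, passage to the limit in the kinetic identity, identification via the uniqueness of the skeleton equation---is the same as the paper's, and the treatment of the Laplacian, kernel, Stratonovich correction, and martingale terms is fine. The gap is in your handling of the two critical terms that survive in the limit: the control term $\int\bar\rho^{\varepsilon}\nabla\sqrt{\bar\rho^{\varepsilon}}\cdot P_{K}g^{\varepsilon}\,(\partial_\xi\psi)(x,\bar\rho^{\varepsilon})\,dx\,dr$ and the kinetic measure term $\int\partial_\xi\psi\,d\bar q^{\varepsilon}$.

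Your first suggestion for the control term---integrate by parts to push the gradient onto $P_{K}g^{\varepsilon}$---does not work in this regime: $\|\nabla\cdot(P_{K}g^{\varepsilon})\|$ picks up a factor of order $K(\varepsilon)\to\infty$, so the resulting expression is not uniformly controlled. Your second suggestion, to ``work at the level of the weak formulation \eqref{eqq-19},'' is the right idea but not available a priori: the stochastic controlled equation is only defined kinetically (Definition~\ref{dfn-n-1}), with test functions $\psi\in C_c^\infty(\mathbb{T}^d\times(0,\infty))$, and both of these terms are present for generic $\psi$. The paper's device (following \cite{FG}, Theorem~6.6) is to test with the product $\psi_M(x,\xi)=\phi_M(\xi)\eta(x)$, where $\phi_M$ is a cutoff equal to $1$ on $[2/M,M]$ and supported in $[1/M,M+1]$, and to pass first $\varepsilon\to0$ and then $M\to\infty$. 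With this choice $\partial_\xi\psi_M=\phi_M'(\xi)\eta(x)$ is supported on the thin sets $[1/M,2/M]\cup[M,M+1]$, so the problematic control term is dominated by the auxiliary measure $d\bar p^{\varepsilon}=\delta_0(\xi-\bar\rho^{\varepsilon})|\nabla\sqrt{\bar\rho^{\varepsilon}}||P_Kg^{\varepsilon}|\,dx\,d\xi\,dt$, whose weak-$\ast$ limit $\tilde p$ is a \emph{finite} measure on $\mathbb{T}^d\times[0,T]\times[0,\infty)$ by \eqref{kk-33}; hence the integral over those thin sets vanishes as $M\to\infty$. Likewise, the kinetic measure contribution $\int\phi_M'(\xi)\eta(x)\,d\tilde q$ vanishes as $M\to\infty$ by the decay properties \eqref{k-16}--\eqref{k-17} that you do not invoke. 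Without this cutoff-in-$\xi$ mechanism, neither term disappears, and you cannot arrive at the weak formulation of the skeleton equation; this is the missing piece in your proposal.
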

\begin{proof}
From (\ref{k-30}), we have  $\rho^g(\cdot)=\mathcal{G}^0\Big(\Big(\int^{\cdot}_0 g_k(s)ds,\int^{\cdot}_0 g^{\sharp}_k(s)ds\Big)_{k\in \mathbb{N}}\Big)$,
where $g_k(s)=( g(x,s), \cos(k\cdot x))$ and $g^{\sharp}_k(s)=(g(x,s), \sin(k\cdot x))$.
For simplicity, with a little of abuse of notations, denote by $\bar{\rho}^{\varepsilon} := \bar{\rho}^{\varepsilon,K(\varepsilon),g^{\varepsilon}}$, $\bar{q}^{\varepsilon}:=\bar{q}^{\varepsilon,g^{\varepsilon}}$ and $\rho:=\rho^g$.
From (\ref{s-5}),
for every $\psi\in C^{\infty}_c(\mathbb{T}^d\times (0,\infty))$, for almost every $t\in [0,T]$, $\mathbb{P}-$a.s., the kinetic function $\bar{\chi}^{\varepsilon}$ of $\bar{\rho}^{\varepsilon}$ and $\bar{q}^{\varepsilon}$ satisfy
     \begin{align}\notag
     &\int_{\mathbb{R}}\int_{\mathbb{T}^d}\bar{\chi}^{\varepsilon}(x,\xi,t)\psi(x,\xi)dxd\xi
      =\int_{\mathbb{R}}\int_{\mathbb{T}^d}\bar{\chi}^{\varepsilon}(x,\xi,0)\psi(x,\xi)dxd\xi
      +\int^t_0\int_{\mathbb{R}}\int_{\mathbb{T}^d}\bar{\chi}^{\varepsilon}\Delta_x\psi dxd\xi ds\\ \notag
      -&\int^t_0\int_{\mathbb{T}^d}\psi(x,\bar{\rho}^{\varepsilon})\nabla\cdot(\bar{\rho}^{\varepsilon} V\ast\bar{\rho}^{\varepsilon})dxds
      +2\int^t_0\int_{\mathbb{T}^d}\bar{\rho}^{\varepsilon}\nabla \sqrt{\bar{\rho}^{\varepsilon}}P_{K(\varepsilon)}g^{\varepsilon}(x,s)\partial_{\xi}\psi(x,\bar{\rho}^{\varepsilon})dxds\\ \notag
      +&\int^t_0\int_{\mathbb{T}^d}\sqrt{\bar{\rho}^{\varepsilon}}P_{K(\varepsilon)}g^{\varepsilon}(x,s)\cdot\nabla_{x}\psi(x,\bar{\rho}^{\varepsilon})dxds
      -\frac{\varepsilon N_{K(\varepsilon)}}{8}\int^t_0\int_{\mathbb{T}^d}(\bar{\rho}^{\varepsilon})^{-1}\nabla \bar{\rho}^{\varepsilon}
\cdot \nabla \psi(x,\bar{\rho}^{\varepsilon})dxds\\
\label{r-2}
-&\int^t_0\int_{\mathbb{R}}\int_{\mathbb{T}^d}\partial_{\xi}\psi(x,\xi)d\bar{q}^{\varepsilon}
+\frac{\varepsilon M_{K(\varepsilon)}}{2}\int^t_0\int_{\mathbb{T}^d} \bar{\rho}^{\varepsilon} \partial_{\xi}\psi(x,\bar{\rho}^{\varepsilon})dxds-\sqrt{\varepsilon}\int^t_0\int_{\mathbb{T}^d}\psi(x,\bar{\rho}^{\varepsilon})\nabla\cdot(\sqrt{\bar{\rho}^{\varepsilon}}d\xi^{K(\varepsilon)}).
\end{align}


For every $\varepsilon\in (0,1)$, let $\bar{p}^{\varepsilon}$ be defined by
 \begin{align*}
   d\bar{p}^{\varepsilon}:=\delta_0(\xi-\bar{\rho}^{\varepsilon})|\nabla \sqrt{\bar{\rho}^{\varepsilon}}||P_{K(\varepsilon)} g^{\varepsilon}|dxd\xi dt.
 \end{align*}
Then, for every $\varepsilon\in (0,1)$, $\bar{p}^{\varepsilon}$ is a nonnegative, almost surely finite measure on $\mathbb{T}^d\times \mathbb{R}\times [0,T]$ by the entropy estimate (\ref{k-35}). In fact, by (\ref{k-35}), we deduce that
\begin{align}\label{kk-33}
  \mathbb{E}|\bar{p}^{\varepsilon}(\mathbb{T}^d\times [0,T]\times \mathbb{R})|^2=& \mathbb{E}\Big(\int^T_0\|\nabla \sqrt{\bar{\rho}^{\varepsilon}}\|^2_{L^2(\mathbb{T}^d)}dt\Big)\Big(\int^T_0\|P_{K(\varepsilon)} g^{\varepsilon}\|^2_{L^2(\mathbb{T}^d)}dt\Big)\\ \notag
   \leq& N\Big[\int_{\mathbb{T}^{d}} \Psi(\rho_0)dx+ C(T,c_1,\|V\|_{L^{p^*}([0,T];L^p(\mathbb{T}^d))}, \|\rho_0\|_{L^1(\mathbb{T}^d)},N)\Big].
 \end{align}

 For any $\varepsilon\in (0,1)$, Let
 \begin{align*}
   Y^{\varepsilon}:=(\bar{\rho}^{\varepsilon}, \nabla \sqrt{\bar{\rho}^{\varepsilon}}, g^{\varepsilon}, (W^k,B^k)_{k\in \mathbb{N}}).
 \end{align*}
 $Y^{\varepsilon}$ takes values in the space
   \begin{align*}
     \bar{Y}=L^1([0,T];L^1(\mathbb{T}^d))\times \Big(L^2([0,T];L^2(\mathbb{T}^d;\mathbb{R}^d) )\Big)^2
    \times C([0,T];\mathbb{R})^{\mathbb{N}},
   \end{align*}
 where $\bar{Y}$ is equipped with the same topology as
in Theorem \ref{thm-n-1}.

By Proposition \ref{prp-n-4} and (\ref{k-35}),
 the laws of $ \{Y^{\varepsilon}\}_{\varepsilon\in (0,1)}$ are tight on $\bar{Y}$. Since $\bar{Y}$ is separable, by Prokhorov's theorem, after passing to a subsequence (still denoted by $Y^{\varepsilon}$), there exists a probability measure $\mu$ on $\bar{Y}$ such that the law of $Y^{\varepsilon}$ converges to $\mu$ in distribution, as  $\varepsilon\rightarrow 0$. Moreover, with the help of the Skorokhod representative theorem, there exists a stochastic probability space $(\tilde{\Omega},\tilde{\mathcal{F}},\tilde{\mathbb{P}},\tilde{\mathbb{E}})$ and $\bar{Y}-$valued random variables $\{\tilde{Y}^{\varepsilon}\}_{\varepsilon\in (0,1)}$ and $\tilde{Y}$ on $\tilde{\Omega}$ such that, for every $\varepsilon\in (0,1)$,
\begin{align}\label{kk-2}
  &\tilde{Y}^{\varepsilon}= Y^{\varepsilon}\ {\rm{in\ law\
  on\ }}\bar{X}\\
  \label{kk-3}
  &\tilde{Y} =\mu \ {\rm{in\ law\
  on\ }}\bar{X},
\end{align}
and such that $\tilde{\mathbb{P}}-$almost surely as $\varepsilon\rightarrow 0$,
\begin{align*}
  \tilde{Y}^{\varepsilon}\rightarrow \tilde{Y} \ {\rm{in}}\ \bar{Y}.
\end{align*}
By similar method as Theorem \ref{thm-n-1} and in \cite[Theorem 6.6]{WWZ22}, it follows that, for every $\varepsilon\in (0,1)$,
$\tilde{\mathbb{P}}-$almost surely there exists $\tilde{\rho}^{\varepsilon}$, and Brownian motions
$(\tilde{W}^{k,\varepsilon},\tilde{B}^{k,\varepsilon} )_{k\in \mathbb{N}}$ such that
\begin{align*}
  \tilde{Y}^{\varepsilon}=\Big(\tilde{\rho}^{\varepsilon}
  , \nabla \sqrt{\tilde{\rho}^{\varepsilon}},\tilde{g}^{\varepsilon},
  (\tilde{W}^{k,\varepsilon},\tilde{B}^{k,\varepsilon} )_{k\in \mathbb{N}}\Big).
\end{align*}
Moreover, define
\begin{align*}
   d\tilde{p}^{\varepsilon}:=\delta_0(\xi-\tilde{\rho}^{\varepsilon})|\nabla \sqrt{\tilde{\rho}^{\varepsilon}}||P_{K(\varepsilon)} \tilde{g}^{\varepsilon}|dxd\xi dt.
 \end{align*}
 Define a mapping $F_{\psi}$ as follows,
  \begin{align*}\notag
  &F_{\psi}(\bar{\rho}^{\varepsilon}, g^{\varepsilon}, (W^{k},B^{k})_{k\in \mathbb{N}})\\
  \notag
  := &
      \int_{\mathbb{R}}\int_{\mathbb{T}^d}\bar{\chi}^{\varepsilon}(x,\xi,0)\psi(x,\xi)dxd\xi
      -\int_{\mathbb{R}}\int_{\mathbb{T}^d}\bar{\chi}^{\varepsilon}(x,\xi,t)\psi(x,\xi)dxd\xi
      +\int^t_0\int_{\mathbb{R}}\int_{\mathbb{T}^d}\bar{\chi}^{\varepsilon}\Delta_x\psi dxd\xi ds\\ \notag
      -&\int^t_0\int_{\mathbb{T}^d}\psi(x,\bar{\rho}^{\varepsilon})\nabla\cdot(\bar{\rho}^{\varepsilon} V\ast\bar{\rho}^{\varepsilon})dxds
      +2\int^t_0\int_{\mathbb{T}^d}\bar{\rho}^{\varepsilon}\nabla \sqrt{\bar{\rho}^{\varepsilon}}P_{K(\varepsilon)}g^{\varepsilon}(x,s)\partial_{\xi}\psi(x,\bar{\rho}^{\varepsilon})dxds\\ \notag
      +&\int^t_0\int_{\mathbb{T}^d}\sqrt{\bar{\rho}^{\varepsilon}}P_{K(\varepsilon)}g^{\varepsilon}(x,s)\cdot\nabla_{x}\psi(x,\bar{\rho}^{\varepsilon})dxds
      -\frac{\varepsilon N_{K(\varepsilon)}}{8}\int^t_0\int_{\mathbb{T}^d}(\bar{\rho}^{\varepsilon})^{-1}\nabla \bar{\rho}^{\varepsilon}
\cdot \nabla \psi(x,\bar{\rho}^{\varepsilon})dxds\\
&
+\frac{\varepsilon M_{K(\varepsilon)}}{2}\int^t_0\int_{\mathbb{T}^d} \bar{\rho}^{\varepsilon} \partial_{\xi}\psi(x,\bar{\rho}^{\varepsilon})dxds-\sqrt{\varepsilon}\int^t_0\int_{\mathbb{T}^d}\psi(x,\bar{\rho}^{\varepsilon})\nabla\cdot(\sqrt{\bar{\rho}^{\varepsilon}}d\xi^{K(\varepsilon)}).
\end{align*}
we have $\bar{q}^{\varepsilon}(\partial_{\xi}\psi)=F_{\psi}(\bar{\rho}^{\varepsilon}, g^{\varepsilon}, (W^{k},B^{k})_{k\in \mathbb{N}})$.
Since $\tilde{\rho}^{\varepsilon}$ is a kinetic solution to (\ref{s-3}) with initial data $\rho_0$ and control $\tilde{g}^{\varepsilon}$ on the probability space $(\tilde{\Omega},\tilde{\mathcal{F}},\tilde{\mathbb{P}},(\tilde{W}^{k,\varepsilon},\tilde{B}^{k,\varepsilon} )_{k\in \mathbb{N}})$, we define $\tilde{q}^{\varepsilon}(\partial_{\xi}\psi):=F_{\psi}(\tilde{\rho}^{\varepsilon}, \tilde{g}^{\varepsilon}, (\tilde{W}^{k,\varepsilon},\tilde{B}^{k,\varepsilon})_{k\in \mathbb{N}})$.
Clearly, $\tilde{q}^{\varepsilon}$ has the same law as $\bar{q}^{\varepsilon}$.

 Proceed similarly as in \cite[Theorem 6.2]{WWZ22}, it follows that
\begin{align*}
  \tilde{Y}=(\tilde{\rho},\nabla\sqrt{\tilde{\rho}},\tilde{g},
  (\tilde{W}^k,\tilde{B}^k)_{k\in \mathbb{N}}).
\end{align*}
As a result, it holds that
\begin{align}\label{t-22}
\tilde{\rho}^{\varepsilon}\rightarrow  \tilde{\rho} \ {\rm{strongly \ in \ }} L^1([0,T];L^1(\mathbb{T}^d)),\  \tilde{\mathbb{P}}-a.s.,
\end{align}
and
\begin{align}\label{t-23}
\nabla\sqrt{\tilde{\rho}^{\varepsilon}}\rightarrow  \nabla\sqrt{\tilde{\rho}}, \ \tilde{g}^{\varepsilon}\rightarrow \tilde{g}\ {\rm{weakly \ in \ }} L^2([0,T];L^2(\mathbb{T}^d;\mathbb{R}^d)),\ \tilde{\mathbb{P}}-a.s..
\end{align}

For $r\in \mathbb{N}$, define $K_r:=\mathbb{T}^d\times [0,T]\times [0,r]$. Let $\mathcal{M}_r$ be the space of bounded Borel measures over $K_r$ (with norm given by the total variation of measures). Clearly, $\mathcal{M}_r$ is the topological dual of $C(K_r)$, which is the set of continuous functions on $K_r$.
Regarding the kinetic measures, by (\ref{k-32}) and (\ref{kk-2}), $\tilde{q}^{\varepsilon}$ is bounded in $L^2(\tilde{\Omega}; \mathcal{M}_r)$ for any $r>0$ uniformly on $\varepsilon\in (0,1)$.
Similarly to Theorem \ref{thm-n-1}, there exists a subsequence (still denoted by $\tilde{q}^{\varepsilon}$) and a kinetic measure $\tilde{q}$ such that $\tilde{q}^{\varepsilon}\rightharpoonup\tilde{q}$ weakly
star in $L^2(\tilde{\Omega}; \mathcal{M}_r)$ for every $r\in \mathbb{N}$. Then, with the aid of (\ref{t-22})-(\ref{t-23}), (\ref{k-28})  and (\ref{k-29}), we have $\tilde{q}\geq 4\delta_0(\xi-\tilde{\rho})\xi|\nabla\sqrt{\tilde{\rho}}|^2$, and
\begin{align}\label{k-16}
       \lim_{M\rightarrow \infty}M^{-1}\tilde{\mathbb{E}}\Big[\tilde{q}(\mathbb{T}^d\times [0,T]\times [M,2M])\Big]=&0,\\
       \label{k-17}
      \lim_{M\rightarrow \infty}M\tilde{\mathbb{E}}\Big[\tilde{q}(\mathbb{T}^d\times [0,T]\times [1/M,2/M])\Big]=&0.
     \end{align}
Similarly, for the measure $\tilde{p}^{\varepsilon}$, in view of (\ref{kk-33}) and (\ref{kk-2}), there exists a subsequence (still denoted by $\tilde{p}^{\varepsilon}$) and a kinetic measure $\tilde{p}$ such that $\tilde{p}^{\varepsilon}\rightharpoonup\tilde{p}$ weakly
star in $L^2(\tilde{\Omega}; \mathcal{M}_r)$ for every $r\in \mathbb{N}$. The condition at infinity (\ref{kk-33}) shows that $\tilde{p}$ defines an element of $L^2(\tilde{\Omega};\mathcal{M})$, where $\mathcal{M}$ denotes the space of bounded Borel measures over $\mathbb{T}^d\times [0,T]\times [0,\infty)$.

In the following, we aim to derive the equation satisfied by $\tilde{\rho}$. To achieve it, we will pass to the limit $\varepsilon\rightarrow 0$ in (\ref{r-2}). However,
as explained in \cite[Theorem 28]{FG23}, it is impossible to identify the limit
\begin{align*}
\lim_{\varepsilon\rightarrow 0}  \Big( 2\int^t_0\int_{\mathbb{T}^d}\tilde{\rho}^{\varepsilon}\nabla \sqrt{\tilde{\rho}^{\varepsilon}}P_{K(\varepsilon)}\tilde{g}^{\varepsilon}(x,s)\partial_{\xi}\psi(x,\tilde{\rho}^{\varepsilon})dxds\Big)
\end{align*}
due to the product of the weakly convergent gradient and control terms. The key observation is that this term does not appear in the weak solution of skeleton equation. Hence, we aim to prove that
 $\tilde{\mathbb{P}}-$almost surely, the kinetic solution $\tilde{\rho}^{\varepsilon}$ of (\ref{r-2}) converges to a weak solution of the skeleton equation (\ref{skeleton}) in the sense of Definition \ref{dfn-3}. To recover the weak formulation of the skeleton equation, test functions that are compactly in the velocity variable are needed.
For every $M>2$, let $\phi_{M}: [0,\infty)\rightarrow [0,1]$ be a smooth function satisfying that for some $c\in (0,\infty)$ independent of $M$,
\begin{align*}
  \phi_{M}=0\ {\rm{on}} \ [0,1/M]\cup[2M,\infty),\ \phi_{M}=1\ {\rm{on}} \ [2/M,M],
\end{align*}
and
\begin{align*}
  |\phi'_{M}|\leq cMI_{\{\xi\in [1/M,2/M]\}}+cM^{-1}I_{\{\xi\in [M,2M]\}}.
\end{align*}
Let $\eta\in C^{\infty}(\mathbb{T}^d)$ be arbitrary. Taking $\psi_M(x,\xi)=\phi_{M}(\xi)\eta(x)$ in (\ref{r-2}), we will pass first to the limit $\varepsilon\rightarrow0$ and then $M\rightarrow \infty$ to recover the weak solution of the skeleton equation.  Compared with \cite[Theorem 28]{FG23}, we need to deal with the kinetic measure term and the kernel term.
We firstly proceed with the kinetic measure term. In view of $\tilde{q}^{\varepsilon}\rightharpoonup\tilde{q}$ weakly
star in $L^2(\tilde{\Omega}; \mathcal{M}_r)$ for every $r>0$, we get
\begin{align*}\notag
\lim_{\varepsilon\rightarrow 0}\tilde{\mathbb{E}}\int^t_0\int_{\mathbb{R}}\int_{\mathbb{T}^d}|\phi'_M(\xi)\eta(x)|d\tilde{q}^{\varepsilon}
=\tilde{\mathbb{E}}\int^t_0\int_{\mathbb{R}}\int_{\mathbb{T}^d}|\phi'_M(\xi)\eta(x)|d\tilde{q}.
\end{align*}
Then,
by the definition of $\phi_{M}$ and using (\ref{k-16})-(\ref{k-17}), we have
\begin{align}\notag
&\lim_{M\rightarrow \infty}\lim_{\varepsilon\rightarrow 0}\tilde{\mathbb{E}}\int^t_0\int_{\mathbb{R}}\int_{\mathbb{T}^d}|\phi'_M(\xi)\eta(x)|d\tilde{q}^{\varepsilon}
=\lim_{M\rightarrow \infty}\tilde{\mathbb{E}}\int^t_0\int_{\mathbb{R}}\int_{\mathbb{T}^d}|\phi'_M(\xi)\eta(x)|d\tilde{q}\\
\label{k-33}
&\lesssim \lim_{M\rightarrow \infty}M\tilde{\mathbb{E}}\tilde{q}([0,T]\times \mathbb{T}^d\times [1/M,2/M])+M^{-1}\lim_{M\rightarrow \infty}\tilde{\mathbb{E}}\tilde{q}([0,T]\times \mathbb{T}^d\times [M,2M])=0.
\end{align}
Similarly, by the finite of $\tilde{p}$, the definition of $\phi_{M}$ and the boundedness of $\eta$, we get
\begin{align*}
&\lim_{M\rightarrow \infty}\lim_{\varepsilon\rightarrow 0}2\tilde{\mathbb{E}}\int^t_0\int_{\mathbb{T}^d}\int_{\mathbb{R}}\xi\phi'_M(\xi)\eta(x)d\tilde{p}^{\varepsilon}\lesssim\lim_{M\rightarrow \infty}\tilde{\mathbb{E}}\int^t_0\int_{\mathbb{T}^d}\int_{[M^{-1},2M^{-1}]\cup [M,2M]}d\tilde{p}=0.
\end{align*}
For the kernel term,  it follows from \cite[Theorem 6.2, step 4]{WWZ22} that
\begin{align*}
\lim_{\varepsilon\rightarrow 0}\int_0^t\int_{\mathbb{T}^d}\phi_{M}(\tilde{\rho}^{\varepsilon})\eta(x)\nabla\cdot(\tilde{\rho}^{\varepsilon}V\ast\tilde{\rho}^{\varepsilon})dxds
=\int_0^t\int_{\mathbb{T}^d}
\phi_{M}(\tilde{\rho})\eta(x)\nabla\cdot(\tilde{\rho}V\ast\tilde{\rho})dxds, \quad \tilde{\mathbb{P}}-a.s..
\end{align*}
Then, by the definition of $\phi_{M}$, (\ref{t-17-1}) and the dominated  convergence theorem, we get $\tilde{\mathbb{P}}-$a.s.,
\begin{align}\label{s-40}
\lim_{M\rightarrow \infty}\lim_{\varepsilon\rightarrow 0}\int_0^t\int_{\mathbb{T}^d}\phi_{M}(\tilde{\rho}^{\varepsilon})\eta(x)\nabla\cdot(\tilde{\rho}^{\varepsilon}V\ast\tilde{\rho}^{\varepsilon})dxds
=\int_0^t\int_{\mathbb{T}^d}
\eta(x)\nabla\cdot(\tilde{\rho}V\ast\tilde{\rho})dxds.
\end{align}


Based on (\ref{k-33}), (\ref{s-40}) and  \cite[Theorem 28]{FG23}, we conclude that $\tilde{\mathbb{P}}-$a.s. for almost every $t\in [0,T]$,
 \begin{align*}
 \int_{\mathbb{T}^d}\tilde{\rho}\eta(x)dx=&\int_{\mathbb{T}^d}\rho_0\eta(x)dx
 -2\int^t_0\int_{\mathbb{T}^d}\sqrt{\tilde{\rho}}\nabla \sqrt{\tilde{\rho}}\cdot \nabla\eta(x)dxds\\
 +&\int^t_0\int_{\mathbb{T}^d}\sqrt{\tilde{\rho}}\tilde{g}\cdot \nabla \eta(x)dxds-\int_0^t\int_{\mathbb{T}^d}
\eta(x)\nabla\cdot(\tilde{\rho}V\ast\tilde{\rho})dxds.
\end{align*}
Thus, $\tilde{\rho}$ is a weak solution of the skeleton equation with control $\tilde{g}$ in the sense of Definition \ref{dfn-1}. Further, by Proposition \ref{L1uniq} and Theorem \ref{thm-1}, it follows that $\tilde{\rho}$ is the unique weak solution of the skeleton equation (\ref{skeleton}).
 Since $\tilde{\rho}^{\varepsilon}$ has the same law as $\bar{\rho}^{\varepsilon}$ and $\tilde{g}$ has the same law as $g$, we get $\bar{\rho}^{\varepsilon}$ converges to $\rho$ in distribution on $L^1([0,T];L^1(\mathbb{T}^d))$, which is the desired result.
\end{proof}

\section{Applications to fluctuations of Ising-Kac-Kawasaki dynamics}\label{sec-7}
In this section, we will show the large deviations of a conservative SPDE called fluctuating  Ising-Kac-Kawasaki equation. This equation was
 proposed by \cite{GJE99} when studying nonlinear fluctuations of Kawasaki dynamical Ising-Kac process in a neighborhood of the critical temperature.
As explained in subsection \ref{subsec-1.1}, (2), this equation can be regarded as a continuum phenomenological model of the Kawasaki dynamical Ising-Kac spin system due to it shares the same fluctuations feature with Kawasaki dynamical Ising-Kac process in terms of Gaussian fluctuations, large deviations and scaling limit near criticality when the spatial dimension $d=1$.

For any spatial dimension $d\ge1$, the fluctuating  Ising-Kac-Kawasaki equation can be written as
\begin{equation}\label{ising-kac-2}
d\rho^{\varepsilon}=\Delta\rho^{\varepsilon} dt-\nabla\cdot[(1-(\rho^{\varepsilon})^2) \nabla J\ast\rho^{\varepsilon}]dt-\varepsilon^{1/2}\nabla\cdot(\sqrt{1-(\rho^{\varepsilon})^2}\circ d\xi^{K(\varepsilon)}),
\end{equation}
where $\xi^K$ is defined by (\ref{s-39-1}).
{\bf{In the sequel, we always assume that $ J\in C^{\infty}(\mathbb{T}^d;\mathbb{R}^d)$.}}

Clearly, (\ref{ising-kac-2}) can be reformulated into It\^o's form:
\begin{equation}\label{ising-kac-3}
d\rho^{\varepsilon}=\Delta\rho^{\varepsilon} dt-\nabla\cdot[(1-(\rho^{\varepsilon})^2) \nabla J\ast\rho^{\varepsilon}]dt-\varepsilon^{1/2}\nabla\cdot(\sqrt{1-(\rho^{\varepsilon})^2}d\xi^{K(\varepsilon)})+\frac{N_{K(\varepsilon)}}{2}\nabla\cdot\Big(\frac{\rho^{\varepsilon}}{1-(\rho^{\varepsilon})^2}\nabla\rho^{\varepsilon}\Big).
\end{equation}

For any $g\in L^2([0,T]\times\mathbb{T}^d;\mathbb{R}^d)$, the skeleton equation associated with (\ref{ising-kac-3}) can be written as
\begin{equation}\label{skeleton-ising}
\partial_t\rho=\Delta\rho-\nabla\cdot[(1-\rho^2)\nabla J\ast\rho]-\nabla\cdot(\sqrt{1-\rho^2}g). 	
\end{equation}
As an important of the proof of large deviations, we need to prove the well-posedness of skeleton equation and weak-strong uniqueness.

\subsection{Well-posedness of skeleton equation and weak-strong continuity}
First of all, we define the weak solution of (\ref{skeleton-ising}).
\begin{definition}\label{dfn-weak-ising}
	Let $g\in L^2(\mathbb{T}^d\times[0,T];\mathbb{R}^d)$, and $\rho_0\in L^2(\mathbb{T}^d;[-1,1])$. A weak solution of (\ref{skeleton-ising}) is a
  function
  $\rho\in C([0,T];L^2(\mathbb{T}^d;[-1,1])) \bigcap L^{2}([0,T];H^1(\mathbb{T}^d))$ that satisfies, for $\psi\in C^{\infty}(\mathbb{T}^d)$,
\begin{align}\notag
  \int_{\mathbb{T}^d}\rho(x,t)\psi(x)dx&=\int_{\mathbb{T}^d}\rho_0\psi(x)dx-\int^t_0\int_{\mathbb{T}^d}\nabla \rho\cdot \nabla\psi dxds+\int^t_0\int_{\mathbb{T}^d}\nabla\psi(x)\cdot[(1-\rho^2) \nabla J\ast \rho] dxds\\
\label{ss-5}
& +\int^t_0\int_{\mathbb{T}^d}\sqrt{1-\rho^2}g\cdot \nabla \psi dxds.
\end{align}

\end{definition}

\begin{proposition}\label{L2uniq}
 Let $\rho_0\in L^2(\mathbb{T}^d;[-1,1])$ and $g\in L^2([0,T];L^2(\mathbb{T}^d;\mathbb{R}^d))$, then the weak solution of (\ref{skeleton-ising}) is unique.
\end{proposition}
\begin{proof}
Let $\rho^1_0, \rho^2_0\in L^2(\mathbb{T}^d;[-1,1])$ be two initial values. Assume that $\rho_1, \rho_2$  are weak solutions of (\ref{skeleton-ising}) in the sense of Definition \ref{dfn-weak-ising} with  initial data $\rho^1_0, \rho^2_0$. For any $\delta\in(0,1)$, let $\eta^{\delta}_d$ be the standard convolution kernel on $\mathbb{T}^d$ and $\eta^{\delta}=:\eta^{\delta}_1$ be a standard convolution kernel on $\mathbb{R}$. For $i\in \{1,2\}$, let $\rho_i^{\delta}=\rho_i\ast\eta^{\delta}_d$. Using the definition of the weak solution to see that
	\begin{align*}
	\partial_t\rho_i^{\delta}=-\nabla\eta^{\delta}_d\ast\rho_i+\nabla\eta_{\delta}\ast[(1-(\rho_i)^2)\nabla J\ast\rho_i]+\nabla\eta^{\delta}_d\ast(\sqrt{1-(\rho_i)^2}g). 	
	\end{align*}
Denote by $a: \mathbb{R}\rightarrow [0,\infty)$ the absolute value function $a(x)=|x|$. For every $\gamma\in(0,1)$, we set $a^{\gamma}:=a\ast\eta^{\gamma}$ and $\sgn^{\gamma}=\sgn\ast\eta^{\gamma}$.
For every $\delta,\gamma\in(0,1)$, applying the chain rule to $\partial_t\int_{\mathbb{T}^d}a^{\gamma}(\rho_1^{\delta}-\rho_2^{\delta})$, by integration by parts formula and the boundedness of $\rho_i$, then
passing to the limits  $\delta\rightarrow0$ and $\gamma\rightarrow0$ to see that
\begin{align*}
	\partial_t\|\rho_1-\rho_2\|_{L^1(\mathbb{T}^d)}
\leq
C(\|\nabla J\|_{L^{\infty}(\mathbb{T}^d)},\|\Delta J\|_{L^{\infty}(\mathbb{T}^d)})(\|\nabla \rho_2\|_{L^2(\mathbb{T}^d)}+1)\|\rho_1-\rho_2\|_{L^1(\mathbb{T}^d)}.
\end{align*}
Thus, we get the uniqueness by applying Gronwall inequality.
\end{proof}

In the following, we provide the existence of weak solutions for (\ref{skeleton-ising}). Since the proof is entirely similar to the proof of Proposition \ref{prp-2}, thus we omit it.
\begin{proposition}\label{prp-2-ising}
 Let
 $\rho_0\in L^2(\mathbb{T}^d;[-1,1])$ and $g\in L^2([0,T]\times\mathbb{T}^d;\mathbb{R}^d)$. Then there exists a weak solution  to (\ref{skeleton-ising})
 in the sense of Definition \ref{dfn-weak-ising}. Moreover,

\end{proposition}

Regarding the weak-strong continuity of (\ref{skeleton-ising}), it can be proved by the same argument as in Proposition \ref{prp-3}. It reads as follows.

\begin{proposition}\label{prp-3-ising}
 Let $\rho_0\in L^2(\mathbb{T}^d;[-1,1])$.
For any $N>0$, assume that $\{g_n\}_{n\in\mathbb{N}_+}, g\subseteq L^2([0,T]\times\mathbb{T}^d;\mathbb{R}^d)$ satisfying
\begin{align*}
\sup_{n\geq 1}\int^T_0 \|g_n(s)\|^2_{L^2(\mathbb{T}^d)}ds\leq N,
\quad g_n\rightharpoonup g\ {\rm{weakly\ in\ }} L^2([0,T]\times\mathbb{T}^d;\mathbb{R}^d).
\end{align*}
For every $n\in \mathbb{N}$, let $\rho_n$ be the weak solution of the skeleton equation (\ref{skeleton-ising}) with control $g_n$ and initial data $\rho_0$. Let $\rho$ be the weak solution of the skeleton equation to (\ref{skeleton-ising}) with control $g$ and initial data $\rho_0$. Then
\begin{align*}
\rho_n\rightarrow\rho\ {\rm{strongly\ in\ }} L^2([0,T];L^2(\mathbb{T}^d)),\quad {\rm{as}}\ n\rightarrow \infty.
\end{align*}
\end{proposition}

\subsection{Proof of large deviations}

 For any $N<\infty$, let $\{g^{\varepsilon}: 0<\varepsilon<1\}$ $\subset \mathcal{A}_N$.
Then, we have
\begin{eqnarray}\label{ss-8}
  \sup_{\varepsilon\in (0,1)}\|g^{\varepsilon}\|^2_{L^{\infty}(\Omega;L^2(\mathbb{T}^d\times [0,T];\mathbb{R}^d) )}\leq N.
\end{eqnarray}
With the above $g^{\varepsilon}$, we consider the following stochastic control equation
\begin{align}\notag
 \partial_t \bar{\rho}^{\varepsilon,K,g^{\varepsilon}}=&\Delta \bar{\rho}^{\varepsilon,K,g^{\varepsilon}}-\nabla\cdot  [(1-(\bar{\rho}^{\varepsilon,K,g^{\varepsilon}})^2) \nabla J\ast \bar{\rho}^{\varepsilon,K,g^{\varepsilon}}]-\sqrt{\varepsilon}\nabla\cdot  (\sqrt{1-(\bar{\rho}^{\varepsilon,K,g^{\varepsilon}})^2} \xi^K)\\
 \label{s-3-ising}
 & +\frac{\varepsilon N_K}{2}\nabla\cdot\Big(\frac{\bar{\rho}^{\varepsilon,K,g^{\varepsilon}}}{1-(\bar{\rho}^{\varepsilon,K,g^{\varepsilon}})^2}\nabla\bar{\rho}^{\varepsilon,K,g^{\varepsilon}}\Big)-\nabla\cdot (\sqrt{1-(\bar{\rho}^{\varepsilon,K,g^{\varepsilon}})^2} P_K g^{\varepsilon}).
 \end{align}
  For simplicity, denote by $\bar{\rho}^{\varepsilon}:=\bar{\rho}^{\varepsilon,K,g^{\varepsilon}}$.

Similarly to Dean-Kawasaki equation, we also define kinetic solution and
make entropy estimates for (\ref{s-3-ising}).
   For a given bounded solution $\bar{\rho}^{\varepsilon}$ of (\ref{s-3-ising}), we define the kinetic function $\bar{\chi}^{\varepsilon}: \mathbb{T}^{d}\times\mathbb{R}\times[0,T]\to\{-1,1\}$ of $\bar{\rho}^{\varepsilon}$ as
\begin{align*}
\bar{\chi}^{\varepsilon}(x,\xi,t):=\mathbf{1}_{\{0<\xi<\bar{\rho}^{\varepsilon}(x,t)\}}-I_{\{\bar{\rho}^{\varepsilon}(x,t)<\xi<0\}}.
\end{align*}
Formally, we have the following identities
\begin{align*}
\nabla\bar{\chi}^{\varepsilon}=2\delta_{0}(\xi-\bar{\rho}^{\varepsilon})\nabla\bar{\rho}^{\varepsilon}, \quad \partial_{\xi}\bar{\chi}^{\varepsilon}=2\delta_{0}(\xi)-2\delta_{0}(\xi-\bar{\rho}^{\varepsilon})\quad {\rm{and}}\ \bar{\rho}^{\varepsilon}=\frac12\int_{\mathbb{R}}\bar{\chi}^{\varepsilon} \mathrm{d}\xi.
\end{align*}

Define $\psi(\xi)=\frac{1}{2}\log\frac{1+\xi}{1-\xi}$ and $\Psi(\xi)=\frac{1}{2}[(1+\xi)\log(1+\xi)-(\xi+1)+(1-\xi)\log(1-\xi)-(1-\xi)]$. Clearly, $\Psi'(\xi)=\psi(\xi)$.
Let the initial data be subject to the space
\begin{align}\label{eqq3}
\overline{\text{{\rm{Ent}}}}(\mathbb{T}^{d})=\Big\{\rho: -1\leq\rho\leq1\ a.e.,\ {\rm{and}}\ \int_{\mathbb{T}^{d}}\Psi(\rho(x))dx<\infty\Big\}.
\end{align}

When the control term $g^{\varepsilon}\equiv0$ and the initial value belongs to $\overline{\text{{\rm{Ent}}}}(\mathbb{T}^{d})$, the well-posedness of (\ref{s-3-ising}) in the framework of stochastic renormalized kinetic solution has been proved by \cite[Theorem 7.12]{WWZ22}.
Similar to \cite[Definition 7.2]{WWZ22}, the definition of (\ref{s-3-ising}) is formulated as follows.
 \begin{definition}\label{dfn-n-1-ising}
Let $\rho_0\in \overline{\text{{\rm{Ent}}}}(\mathbb{T}^{d})$, $\varepsilon\in (0,1)$ and $K\in \mathbb{N}$. A stochastic renormalized kinetic solution of (\ref{s-3-ising}) with initial datum $\bar{\rho}^{\varepsilon}(\cdot,0)=\rho_0$ is an almost surely continuous $L^2(\mathbb{T}^d;[-1,1])$-valued $\mathcal{F}_t$-predictable function $\bar{\rho}^{\varepsilon}\in L^2\left(\Omega\times[0,T];L^2(\mathbb{T}^d;[-1,1])\right)$ that satisfies the following properties.
\begin{enumerate}
  \item Essentially bounded: almost surely for every $t\in[0,T]$,
		\begin{equation}\label{eqq4}
		\bar{\rho}^{\varepsilon}(\cdot,t)\in[-1,1],\ a.e.
		\end{equation}
  \item Regularity of $\sqrt{1-(\bar{\rho}^{\varepsilon})^2}$: there exists a constant $c\in(0,\infty)$ such that
		\begin{equation}\label{eqq5}
		\mathbb{E}\int^T_0\int_{\mathbb{T}^d}\Big[|\nabla\sqrt{1-(\bar{\rho}^{\varepsilon})^2}|^2+|\nabla \bar{\rho}^{\varepsilon}|^2\Big]\mathrm{d}x\mathrm{d}s\le c(T,d,\rho_0,J).
		\end{equation}
		Furthermore, there exists a finite nonnegative kinetic measure $\bar{q}^{\varepsilon}$ satisfying the following items.
		\item Regularity: almost surely
		\begin{align}\label{eqq6}
		\delta_{0}(\xi-\bar{\rho}^{\varepsilon})|\nabla \bar{\rho}^{\varepsilon}|^{2}\le \bar{q}^{\varepsilon}\quad {\rm{on}}\ \mathbb{T}^d\times[-1,1]\times[0,T].
		\end{align}
\item Optimal regularity: the measure $\mu^{\varepsilon}$ defined by
	\begin{align}\label{mu}
		\mathrm{d}\mu^{\varepsilon}=\left(1-\xi^2\right)^{-1}\mathrm{d}\bar{q}^{\varepsilon}\ \text{is finite on}\ \mathbb{T}^d\times(-1,1)\times[0,T].
	\end{align}
\item The equation: the pair $(\bar{\chi}^{\varepsilon}(x,\xi,t),\bar{q}^{\varepsilon})$ satisfies, for every $\psi\in C^{\infty}_c(\mathbb{T}^d\times (-1,1))$, for almost every $t\in [0,T]$, $\mathbb{P}-$a.s.,
     \begin{align}\notag
      &\int_{\mathbb{R}}\int_{\mathbb{T}^d}\bar{\chi}^{\varepsilon}(x,\xi,t)\psi(x,\xi)dxd\xi
      =\int_{\mathbb{R}}\int_{\mathbb{T}^d}\bar{\chi}(\rho_0(x))\psi(x,\xi)dxd\xi
      -2\int^t_0\int_{\mathbb{T}^d}\nabla\bar{\rho}^{\varepsilon}\cdot\nabla\psi(x,\bar{\rho}^{\varepsilon}) dx ds\\ \notag
      -&2\int^t_0\int_{\mathbb{T}^d}\psi(x,\bar{\rho}^{\varepsilon})\nabla\cdot[(1-(\bar{\rho}^{\varepsilon})^2) \nabla J\ast\bar{\rho}^{\varepsilon}]dxds
      +2\int^t_0\int_{\mathbb{T}^d}\sqrt{1-(\bar{\rho}^{\varepsilon})^2}\nabla \bar{\rho}^{\varepsilon}\cdot P_Kg^{\varepsilon} (\partial_{\xi}\psi)(x,\bar{\rho}^{\varepsilon})dxds\\ \notag
      +& 2\int^t_0\int_{\mathbb{T}^d}\sqrt{1-(\bar{\rho}^{\varepsilon})^2}P_Kg^{\varepsilon}(x,s)\cdot (\nabla  \psi)(x,\bar{\rho}^{\varepsilon})dxds
      -\varepsilon N_K\int^t_0\int_{\mathbb{T}^d}\Big(\frac{\bar{\rho}^{\varepsilon}}{1-(\bar{\rho}^{\varepsilon})^2}\Big)\nabla \bar{\rho}^{\varepsilon}
\cdot \nabla \psi(x,\bar{\rho}^{\varepsilon})dxds\\
\notag
-&2\int^t_0\int_{\mathbb{R}}\int_{\mathbb{T}^d}\partial_{\xi}\psi(x,\xi)d\bar{q}^{\varepsilon}+\varepsilon M_K\int^t_0\int_{\mathbb{T}^d} (1-(\bar{\rho}^{\varepsilon})^2) \partial_{\xi}\psi(x,\bar{\rho}^{\varepsilon})dxds\\
\label{s-5-ising}
-&2\sqrt{\varepsilon}\int^t_0\int_{\mathbb{T}^d}\psi(x,\bar{\rho}^{\varepsilon})\nabla\cdot(\sqrt{1-(\bar{\rho}^{\varepsilon})^2}d\xi^{K}).
\end{align}
\end{enumerate}
\end{definition}
We point out that the skeleton equation (\ref{skeleton}) can also be equipped with kinetic solution similar to (\ref{s-3-ising}). Then, by the same method as Theorem \ref{thm-1}, we can show the equivalence of renormalized kinetic solutions and weak solutions to (\ref{skeleton}).

Thanks to \cite[Proposition 7.11, Theorem 7.12]{WWZ22}, by using the boundedness of weak solution and (\ref{ss-8}), we  readily deduce the following well-posedness and tightness of (\ref{s-3-ising}).
\begin{theorem}\label{thm-8}
Assume that $J\in C^{\infty}(\mathbb{T}^d)$. Let $\rho_0\in \overline{\rm{Ent}} (\mathbb{T}^d)$.  Then (\ref{s-3-ising}) admits a unique renormalized kinetic solution $\bar{\rho}^{\varepsilon,K,g^{\varepsilon}}$ in the sense of Definition \ref{dfn-n-1-ising} which is a probabilistically strong solution. Furthermore, assume that $\varepsilon\in (0,1)$, $K\in \mathbb{N}$ satisfying $\varepsilon K^{d+2}(\varepsilon)< \infty$, then the laws of the solutions $\{\bar{\rho}^{\varepsilon,K(\varepsilon),g^{\varepsilon}}\}_{\varepsilon\in (0,1)}$ of (\ref{s-3-ising}) are tight on $L^2([0,T];L^2(\mathbb{T}^d; [-1,1]))$.
\end{theorem}

The result of large deviations of solutions of (\ref{ising-kac-3}) is formulated as follows.
\begin{theorem}\label{thm-LDP-ising}
Assume that $J\in C^{\infty}(\mathbb{T}^d)$. Let $\rho_0\in  \overline{\rm{Ent}} (\mathbb{T}^d)$, $\varepsilon\in (0,1)$, and $K\in \mathbb{N}$ satisfying $K(\varepsilon)\rightarrow \infty$ and $\varepsilon K^{d+2}(\varepsilon)\rightarrow 0$. Then the solutions $\{\rho^{\varepsilon,K(\varepsilon)}(\rho_0)\}_{\varepsilon\in (0,1)}$ of (\ref{ising-kac-3}) satisfy large deviation principles with rate function $I$ on $L^2([0,T];L^2(\mathbb{T}^d;[-1,1]))$, where $I$ is given by
\begin{align}\label{k-7-ising}
I(\rho)=\frac{1}{2}\inf\Big\{\|g\|^2_{L^2(\mathbb{T}^d\times[0,T];\mathbb{R}^d)}:
\partial_t \rho=\Delta\rho-\nabla\cdot[(1-\rho^2) \nabla J\ast\rho]-\nabla\cdot(\sqrt{1-\rho^2} g),\  \rho(0)=\rho_0
\Big\}.
\end{align}
\end{theorem}
\begin{proof}
Based on Proposition \ref{prp-3-ising}, it remains to verify {\textbf{(a)}} in {\textbf{Condition A}}.
That is, for every $N<\infty$, let $\{g^{\varepsilon}: 0<\varepsilon<1\}$ $\subset \mathcal{A}_N$, if $g^{\varepsilon}\rightarrow g$ in distribution weakly in $L^2([0,T]\times \mathbb{T}^d;\mathbb{R}^d)$, we need to show that
the solution $\bar{\rho}^{\varepsilon,K(\varepsilon),g^{\varepsilon}}$ of (\ref{s-3-ising}) with initial data $\rho_0$ satisfies that, as $\varepsilon\rightarrow 0$,
\begin{align*}
  \bar{\rho}^{\varepsilon,K(\varepsilon),g^{\varepsilon}}\rightarrow \rho^g\quad {\rm{in\ distribution\ on\ }} L^2([0,T];L^2(\mathbb{T}^d)),
\end{align*}
where $\rho^g$ is the unique solution of the skeleton equation (\ref{skeleton-ising}) with initial data $\rho_0$.

Due to the tightness of $\{\bar{\rho}^{\varepsilon,K(\varepsilon),g^{\varepsilon}}\}_{\varepsilon\in (0,1)}$ on $L^2([0,T];L^2(\mathbb{T}^d; [-1,1]))$, (\ref{eqq5}) and (\ref{mu}), by a simplified version of Theorem \ref{thm-3}, Prokhorov's theorem and the Skorokhod representation theorem, there exists a stochastic basis still denoted by $(\Omega,\mathcal{F},\mathbb{P})$, a subsequence $\varepsilon\rightarrow 0$ and $\rho^*\in L^2(\Omega\times [0,T];H^1(\mathbb{T}^d))$ such that
$\mathbb{P}-$a.s.,
\begin{align}\label{ss-10}
  \rho^{\varepsilon}\rightarrow \rho^*\ {\rm{strongly\ in}} \ L^2([0,T];L^2(\mathbb{T}^d;[-1,1]))\ {\rm{and\ weakly\ in}}\ L^2([0,T];H^1(\mathbb{T}^d)),
\end{align}
and that there exists a finite, nonnegative measure $q$ on $\mathbb{T}^d\times [0,T]\times [-1,1]$ such that the corresponding kinetic measure $\bar{q}^{\varepsilon}$ of $\bar{\rho}^{\varepsilon,K(\varepsilon),g^{\varepsilon}}$
fulfills
\begin{align}\label{ss-12}
  (1-\xi^2)^{-1}\bar{q}^{\varepsilon}\rightharpoonup q\ {\rm{weakly\ in}}\ \mathcal{M}_{+}(\mathbb{T}^d\times [0,T]\times [-1,1]),
\end{align}
where $\mathcal{M}_{+}(\mathbb{T}^d\times [0,T]\times [-1,1])$ is the space of nonnegative Radon measures on $\mathbb{T}^d\times [0,T]\times [-1,1]$.

In view of (\ref{ss-10}), it suffices to show that $\rho^*$ is the unique weak solution of  (\ref{skeleton-ising}). Similar to Theorem \ref{thm-3}, we need to introduce test functions
that are compactly supported in the velocity variable.
For every $\beta\in (0,\frac{1}{4})$, let $\phi_{\beta}\in C^{\infty}_c((-1,1);[0,1])$ satisfy that $\phi_{\beta}(\xi)=0$ if $\xi\geq 1-\beta$ and
$\phi_{\beta}(\xi)=1$ if $\xi\in [-1+2\beta, 1-2\beta]$ and that $|\phi'_{\beta}(\xi)|\leq \frac{\bar{c}}{\beta}(I_{-1+\beta<\xi<-1+2\beta}+I_{1-2\beta<\xi<1-\beta})$ for some $\bar{c}$ independent of $\beta$. A direct calculation shows that
\begin{align}\label{ss-11}
  |\phi'_{\beta}(\xi)|\leq \frac{c}{1-\xi^2}\Big(I_{-1+\beta<\xi<-1+2\beta}+I_{1-2\beta<\xi<1-\beta}\Big),
\end{align}
where $c$ is a constant independent of $\beta$.
 Let $\eta(x)\in C^{\infty}_c(\mathbb{T}^d)$. Taking $\psi(x,\xi)=\eta(x)\phi_{\beta}(\xi)$ in (\ref{s-5-ising}), we get
\begin{align}\notag
      &\int_{\mathbb{R}}\int_{\mathbb{T}^d}\bar{\chi}^{\varepsilon}(x,\xi,t)\phi_{\beta}(\xi)\eta(x)dxd\xi
      =\int_{\mathbb{R}}\int_{\mathbb{T}^d}\bar{\chi}(\rho_0(x))\phi_{\beta}(\xi)\eta(x)dxd\xi
      -2\int^t_0\int_{\mathbb{T}^d}\nabla\bar{\rho}^{\varepsilon}\cdot\nabla \eta(x)\phi_{\beta}(\bar{\rho}^{\varepsilon}) dx ds\\ \notag
      -&2\int^t_0\int_{\mathbb{T}^d}\eta(x)\phi_{\beta}(\bar{\rho}^{\varepsilon})\nabla\cdot[(1-(\bar{\rho}^{\varepsilon})^2) \nabla J\ast\bar{\rho}^{\varepsilon}]dxds
      +2\int^t_0\int_{\mathbb{T}^d}\sqrt{1-(\bar{\rho}^{\varepsilon})^2}\nabla \bar{\rho}^{\varepsilon}\cdot P_Kg^{\varepsilon} \eta(x)\phi'_{\beta}(\bar{\rho}^{\varepsilon}) dxds\\ \notag
      +& 2\int^t_0\int_{\mathbb{T}^d}\sqrt{1-(\bar{\rho}^{\varepsilon})^2}P_Kg^{\varepsilon}(x,s)\cdot \nabla \eta(x) \phi_{\beta}(\bar{\rho}^{\varepsilon})dxds
      -\varepsilon N_K\int^t_0\int_{\mathbb{T}^d}\Big(\frac{\bar{\rho}^{\varepsilon}}{1-(\bar{\rho}^{\varepsilon})^2}\Big)\nabla \bar{\rho}^{\varepsilon}
\cdot \nabla \eta(x) \phi_{\beta}(\bar{\rho}^{\varepsilon})dxds\\
\notag
-&2\int^t_0\int_{\mathbb{R}}\int_{\mathbb{T}^d}\eta(x)\phi'_{\beta}(\xi)d\bar{q}^{\varepsilon}+\varepsilon M_K\int^t_0\int_{\mathbb{T}^d} (1-(\bar{\rho}^{\varepsilon})^2) \eta(x)\phi'_{\beta}(\bar{\rho}^{\varepsilon})dxds\\  \label{ss-9}
-&2\sqrt{\varepsilon}\int^t_0\int_{\mathbb{T}^d}\eta(x)\phi_{\beta}(\bar{\rho}^{\varepsilon})\nabla\cdot(\sqrt{1-(\bar{\rho}^{\varepsilon})^2}d\xi^{K})
=: \sum^9_{i=1}K_i .
\end{align}
Since the compact support of $\phi_{\beta}$ is on $(-1,1)$, by the same method as \cite[(4.17)-(4.19)]{DFG}, we have $\mathbb{P}-$a.s.,
\begin{align*}
  \lim_{\varepsilon\rightarrow 0}|K_6+K_8|=0,
\quad
  \lim_{\varepsilon\rightarrow 0}\sup_{t\in [0,T]}|K_9(t)|=0,
\end{align*}
and
\begin{align*}
\Big|\int_{\mathbb{R}}\int_{\mathbb{T}^d}\bar{\chi}^{\varepsilon}(x,\xi,t)\phi_{\beta}(\xi)\eta(x)dxd\xi
- 2\int_{\mathbb{T}^d}\rho^*(x,t)\eta(x)dx\Big|\leq c\beta,\quad \Big|K_1-2\int_{\mathbb{T}^d}\rho_0(x)\eta(x)dx\Big|\leq c\beta.
\end{align*}
For $K_2$ and $K_5$, by (\ref{ss-10}) and the boundedness of weak solution, it follows that
\begin{align*}
  &\lim_{\varepsilon\rightarrow 0}K_2=-2\int^t_0\int_{\mathbb{T}^d}\nabla\rho^*\cdot\nabla \eta(x)\phi_{\beta}(\rho^*) dx ds,\\
  &\lim_{\varepsilon\rightarrow 0}K_5= 2\int^t_0\int_{\mathbb{T}^d}\sqrt{1-(\rho^*)^2}g(x,s)\cdot \nabla \eta(x) \phi_{\beta}(\rho^*)dxds.
\end{align*}
Regarding the kernel term, due to (\ref{ss-10}), (\ref{eqq5}) and the boundedness of the weak solution, we deduce that
\begin{align*}
  \lim_{\varepsilon\rightarrow 0}K_3=-2\int^t_0\int_{\mathbb{T}^d}\eta(x)\phi_{\beta}(\rho^*)\nabla\cdot[(1-(\rho^*)^2) \nabla J\ast\rho^*]dxds.
\end{align*}
Based on the above, by taking $\varepsilon\rightarrow 0$ on both sides of (\ref{ss-9}),
we get
\begin{align}\notag
&2\Big|\int_{\mathbb{T}^d}\rho(x,t)\eta(x)dx-\int_{\mathbb{T}^d}\rho_0(x)\eta(x)dx
+\int^t_0\int_{\mathbb{T}^d}\nabla\bar{\rho}\cdot\nabla \eta(x)\phi_{\beta}(\bar{\rho}) dx ds\\
\notag
+& \int^t_0\int_{\mathbb{T}^d}\eta(x)\phi_{\beta}(\rho^*)\nabla\cdot[(1-(\rho^*)^2) \nabla J\ast\rho^*]dxds-\int^t_0\int_{\mathbb{T}^d}\sqrt{1-(\rho^*)^2}g(x,s)\cdot \nabla \eta(x) \phi_{\beta}(\rho^*)dxds\Big|\\
\label{ss-16}
\leq&
c\beta+ \limsup_{\varepsilon\rightarrow 0}(K_4+K_7).
\end{align}

Due to (\ref{ss-12}), we have
\begin{align*}
\limsup_{\varepsilon\rightarrow 0}\int^T_0\int_{\mathbb{R}}\int_{\mathbb{T}^d}\eta(x)\phi'_{\beta}(\xi)d\bar{q}^{\varepsilon}
\leq& \|\eta\|_{L^{\infty}(\mathbb{T}^d)}\limsup_{\varepsilon\rightarrow 0}\int^T_0\int_{\mathbb{R}}\int_{\mathbb{T}^d}(1-\xi^2)\phi'_{\beta}(\xi)(1-\xi^2)^{-1}d\bar{q}^{\varepsilon}\\
=& \|\eta\|_{L^{\infty}(\mathbb{T}^d)}\int^T_0\int_{\mathbb{R}}\int_{\mathbb{T}^d}(1-\xi^2)\phi'_{\beta}(\xi)dq.
\end{align*}
As a result of (\ref{ss-11}) and the dominated convergence theorem, it gives
\begin{align}\label{ss-14}
\lim_{\beta\rightarrow 0}\limsup_{\varepsilon\rightarrow 0}K_7=0.
\end{align}
The term $K_4$ can be handled by the same method as \cite[(4.23)-(4.24)]{DFG}.
It follows from H\"{o}lder inequality, (\ref{eqq6}) and (\ref{ss-8}) that
\begin{align*}
\limsup_{\varepsilon\rightarrow 0} |K_4|
\leq& 2\|\eta\|_{L^{\infty}(\mathbb{T}^d)} \|g^{\varepsilon}\|_{L^2([0,T]\times \mathbb{T}^d)}\limsup_{\varepsilon\rightarrow 0}\Big(\int^t_0\int_{\mathbb{T}^d}(1-(\bar{\rho}^{\varepsilon})^2)|\phi'_{\beta}(\bar{\rho}^{\varepsilon})|^2|\nabla \bar{\rho}^{\varepsilon}|^2dxds\Big)^{\frac{1}{2}}\\
\leq& 2\|\eta\|_{L^{\infty}(\mathbb{T}^d)} \|g^{\varepsilon}\|_{L^2([0,T]\times \mathbb{T}^d)}\limsup_{\varepsilon\rightarrow 0}\Big(\int^t_0\int_{\mathbb{T}^d}\int_{\mathbb{R}}(1-\xi^2)|\phi'_{\beta}(\xi)|^2\frac{d\bar{q}^{\varepsilon}}{1-\xi^2}\Big)^{\frac{1}{2}}\\
\leq& 2\sqrt{N}\|\eta\|_{L^{\infty}(\mathbb{T}^d)}\Big(\int^t_0\int_{\mathbb{T}^d}\int_{\mathbb{R}}(1-\xi^2)|\phi'_{\beta}(\xi)|^2dq\Big)^{\frac{1}{2}},
\end{align*}
where we have used (\ref{ss-12}) in the last inequality. Then, by (\ref{ss-11}), it yields
\begin{align}\label{ss-15}
\lim_{\beta\rightarrow 0}\limsup_{\varepsilon\rightarrow 0}K_4=0.
\end{align}
Passing to the limit $\beta\rightarrow 0$ on both sides of (\ref{ss-16}) and by (\ref{ss-14})-(\ref{ss-15}), we get
\begin{align*}
\int_{\mathbb{T}^d}\rho^*(x,t)\eta(x)dx=&\int_{\mathbb{T}^d}\rho_0(x)\eta(x)dx
-\int^t_0\int_{\mathbb{T}^d}\nabla\rho^*\cdot\nabla \eta(x)dx ds
- \int^t_0\int_{\mathbb{T}^d}\eta(x)\nabla\cdot[(1-(\rho^*)^2) \nabla J\ast\rho^*]dxds\\
&+\int^t_0\int_{\mathbb{T}^d}\sqrt{1-(\rho^*)^2}g(x,s)\cdot \nabla \eta(x) dxds.
\end{align*}
Thus, $\rho^*$ is the weak solution to skeleton equation (\ref{skeleton-ising}) with initial data $\rho_0$. By the uniqueness of (\ref{skeleton-ising}), we have $\rho^*=\rho^g$. We complete the proof.

\end{proof}

\noindent{\bf  Acknowledgements}\quad Zhengyan Wu acknowleges support by the Deutsche Forschungsgemeinschaft (DFG, German Research Foundation) via IRTG 2235 - Project Number 282638148. Rangrang Zhang acknowledges support by National Natural Science Foundation of China (No. 12171032), Beijing Institute of Technology Research Fund Program for Young Scholars and MIIT Key Laboratory of Mathematical Theory, Computation in Information Security.


%

\newcommand{\etalchar}[1]{$^{#1}$}

\end{document}